\documentclass[a4paper,12pt, twoside, pdftex]{amsart}

\usepackage{fullpage}
\usepackage{amsmath, amsthm, amssymb}
\usepackage{txfonts}
\usepackage{amsfonts}
\usepackage{prettyref}
\usepackage{mathrsfs}
\usepackage[all]{xy}
\usepackage[utf8]{inputenc}

\newtheorem{dummy}{dummy}[section]
\newtheorem{lemma}[dummy]{Lemma}
\newtheorem{theorem}[dummy]{Theorem}

\newtheorem{corollary}[dummy]{Corollary}

\theoremstyle{definition}
\newtheorem{definition}[dummy]{Definition}
\newtheorem*{definition*}{Definition}
\newtheorem{example}[dummy]{Example}

\newtheorem{remark}[dummy]{Remark}

\newtheorem*{acknowledgements}{Acknowledgements}

\numberwithin{equation}{section}

\usepackage{tikz}
\usetikzlibrary{backgrounds}

\newrefformat{th}{Theorem~\ref{#1}}
\newrefformat{cr}{Corollary~\ref{#1}}
\newrefformat{lm}{Lemma~\ref{#1}}
\newrefformat{dl}{Definition-Lemma~\ref{#1}}
\newrefformat{df}{Definition~\ref{#1}}
\newrefformat{cl}{Claim~\ref{#1}}
\newrefformat{sl}{Sublemma~\ref{#1}}
\newrefformat{pr}{Proposition~\ref{#1}}
\newrefformat{cj}{Conjecture~\ref{#1}}
\newrefformat{st}{Step~\ref{#1}}
\newrefformat{sc}{Section~\ref{#1}}
\newrefformat{df}{Definition~\ref{#1}}
\newrefformat{rm}{Remark~\ref{#1}}
\newrefformat{q}{Question~\ref{#1}}
\newrefformat{pb}{Problem~\ref{#1}}
\newrefformat{cd}{Condition~\ref{#1}}
\newrefformat{example}{Example~\ref{#1}}
\newrefformat{he}{Heore~\ref{#1}}
\newrefformat{fg}{Figure~\ref{#1}}
\newrefformat{tb}{Table~\ref{#1}}
\newrefformat{as}{Assumption~\ref{#1}}

\newcommand{\pref}{\prettyref}
\newcommand{\can}{\operatorname{can}}
\newcommand{\cont}{\operatorname{cont}}
\newcommand{\Coh}{\operatorname{Coh}}

\newcommand{\Cone}{\operatorname{Cone}}
\newcommand{\Conv}{\operatorname{Conv}}
\newcommand{\Core}{\operatorname{Core}}

\newcommand{\del}{\partial}

\newcommand{\Exit}{\operatorname{Exit}}

\newcommand{\Fan}{\operatorname{Fan}}
\newcommand{\Fuk}{\operatorname{Fuk}}

\newcommand{\LGr}{\operatorname{LGr}}

\newcommand{\Hom}{\operatorname{Hom}}

\newcommand{\Int}{\operatorname{Int}}
\newcommand{\Lie}{\operatorname{Lie}}
\newcommand{\Log}{\operatorname{Log}}

\newcommand{\mom}{\operatorname{mom}}
\newcommand{\msh}{\operatorname{\mu sh}}

\newcommand{\Nbd}{\operatorname{Nbd}}

\newcommand{\pr}{\operatorname{pr}}

\newcommand{\ret}{\operatorname{ret}}

\newcommand{\Sk}{\operatorname{Sk}}

\newcommand{\stFan}{\operatorname{stFan}}

\newcommand{\VVert}{\operatorname{Vert}}

\newcommand{\cL}{\mathcal{L}}
\newcommand{\cM}{\mathcal{M}}
\newcommand{\cN}{\mathcal{N}}

\newcommand{\cT}{\mathcal{T}}

\newcommand{\bA}{\mathbb{A}}
\newcommand{\bC}{\mathbb{C}}

\newcommand{\bL}{\mathbb{L}}

\newcommand{\bP}{\mathbb{P}}

\newcommand{\bR}{\mathbb{R}}
\newcommand{\bT}{\mathbb{T}}
\newcommand{\bZ}{\mathbb{Z}}

\newcommand{\bfT}{\mathbf{T}}

\newcommand{\bfW}{\mathbf{W}}

\begin{document}

\title{Large volume limit fibrations over fanifolds}
\author[H.~Morimura]{Hayato Morimura}
\address{SISSA, via Bonomea 265, 34136 Trieste, Italy}
\email{hayato.morimura@ipmu.jp}
\curraddr{
Kavli Institute for the Physics and Mathematics of the Universe (WPI),
University of Tokyo,
5-1-5 Kashiwanoha,
Kashiwa,
Chiba,
277-8583,
Japan.}

\subjclass[2010]{53D37}
\date{}
\pagestyle{plain}

\begin{abstract}
We lift the stratified torus fibration over a fanifold constructed by Gammage--Shende to the associated Weinstein manifold-with-boundary,
which is homotopic to a filtered stratified integrable system with noncompact fibers.
When the fanifold admits a dual stratified space in a suitable sense,
we give a stratified fibration over it completing SYZ picture.
For the fanifold associated with a very affine hypersurface,
we realize the latter fibration as a restriction of SYZ fibrations over the tropical hypersurface proposed by Abouzaid--Auroux--Katzarkov. 
\end{abstract}

\maketitle

\section{Introduction}
Given a Calabi--Yau manifold
$X$,
Homological Mirror Symmetry(HMS) conjecture
\cite{Kon}
claims the existence of another Calabi--Yau manifold
$\breve{X}$ 
called its mirror partner
whose
Fukaya category
$\Fuk(\breve{X})$
and
dg category of coherent sheaves
$\Coh(\breve{X})$
are respectively equivalent to
$\Coh(X)$
and
$\Fuk(X)$.
The equivalences are believed to connect the
symplectic
and
complex
geometry of
the mirror pair
$(X, \breve{X})$.
Nowadays,
HMS concerns more general spaces
but neither
precise definition
nor
systematic construction
of mirror pairs are available.

One way to go is indicated by Strominger--Yau--Zaslow(SYZ) conjecture
\cite{SYZ},
which has been elaborated independently by
Kontsevich--Soibelman
\cite{KS00, KS06}
and
Gross--Siebert
\cite{GS11}. 
Roughly speaking,
$(X, \breve{X})$
should admit so called SYZ fibrations,
i.e.
dual special Lagrangian torus fibrations
$X \to B \leftarrow \breve{X}$
over a common base.
Then
$\breve{X}$
should be obtained by
dualizing
$X \to B$
over the smooth locus
and
compactifying the result in a suitable way. 
SYZ conjecture is
hard to correctly formulate
and
still largely open.

Recently,
Gammage--Shende introduced fanifolds
\cite[Definition 2.4]{GS1}
by gluing rational polyhedral fans of cones,
which provide the organizing
topological
and
discrete
data for HMS at large volume
\cite[Theorem 5.4]{GS1}. 
To a fanifold
$\Phi$
they associated an algebraic space
$\bfT(\Phi)$
\cite[Proposition 3.10]{GS1}
obtained as the gluing of the toric varieties
$T_\Sigma$
associated with the fans
$\Sigma$
along their toric boundaries.
Here,
one could consider
stacky fans
and
their associated toric stacks
instead. 
Based on an idea from SYZ fibrations,
they constructed its mirror partner
$\widetilde{\bfW}(\Phi)$
by inductive Weinstein handle attachments. 

\begin{theorem}[{\cite[Theorem 4.1]{GS1}}] \label{thm:fibration}
Let
$\Phi \subset \cM$
be a fanifold.
Then there exists a triple
$(\widetilde{\bfW}(\Phi), \widetilde{\bL}(\Phi), \pi)$
of
a subanalytic Weinstein manifold-with-boundary
$\widetilde{\bfW}(\Phi)$,
a conic subanalytic Lagrangian
$\widetilde{\bL}(\Phi) \subset \widetilde{\bfW}(\Phi)$,
and
a map
$\pi \colon \widetilde{\bL}(\Phi) \to \Phi$
satisfying the following conditions.
\begin{itemize}
\item[(1)]
Let
$S \subset \Phi$
be a stratum of codimension
$d$.
Then:
\begin{itemize}
\item[$\bullet$]
$\pi^{-1}(S) \cong T^d \times S$
where
$T^d$
is a real $d$-dimensional torus.
\item[$\bullet$]
$\pi^{-1}(\Nbd(S)) \cong \bL(\Sigma_S) \times S$
where
$\Nbd(S)$
is a suitable neighborhood
and
$\bL(\Sigma_S)$
is the FLTZ Lagrangian associated with the normal fan
$\Sigma_S$
of
$S$.
\item[$\bullet$]
In a neighborhood of
$\pi^{-1}(S) \cong T^d \times S$,
there is a symplectomorphism of pairs
\begin{align} \label{eq:charts}
(T^* T^d \times T^* S, \bL(\Sigma_S) \times S)
\hookrightarrow
(\widetilde{\bfW}(\Phi), \widetilde{\bL}(\Phi)).
\end{align}
\end{itemize}
\item[(2)]
If
$\Phi$
is closed,
then we have
$\widetilde{\bL}(\Phi) = \Core(\widetilde{\bfW}(\Phi))$
for the skeleton
$\Core(\widetilde{\bfW}(\Phi))$
of
$\widetilde{\bfW}(\Phi)$.
\item[(3)]
A subfanifold
$\Phi^\prime \subset \Phi$
determines a stopped Weinstein sector
$\widetilde{\bfW}(\Phi^\prime)
\subset
\widetilde{\bfW}(\Phi)$
with relative skeleton
$\widetilde{\bL}(\Phi^\prime)
=
\widetilde{\bfW}(\Phi^\prime) \cap \widetilde{\bL}(\Phi)$
\item[(4)]
The Weinstein manifold-with-boundary
$\widetilde{\bfW}(\Phi)$
carries a Lagrangian polarization given in the local charts
\pref{eq:charts}
by taking 
the fiber direction in
$T^* T^d$
and
the base direction in
$T^* S$.
\end{itemize}
\end{theorem}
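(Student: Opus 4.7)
I would build the triple $(\widetilde{\bfW}(\Phi), \widetilde{\bL}(\Phi), \pi)$ by induction on the codimension $k$ of strata of $\Phi$, progressively attaching Weinstein handles whose local models are prescribed by the normal fans. The base case $k=0$ is the disjoint union of cotangent bundles $T^*S$ over the open strata $S \subset \Phi$, with $\widetilde{\bL}$ the zero section and $\pi$ the bundle projection; the normal fan is a point, so condition (1) degenerates correctly. For the inductive step, having completed the construction on the closed subfanifold $\Phi_{<k}$ of strata of codimension less than $k$, I would attach to $\widetilde{\bfW}(\Phi_{<k})$ the Weinstein pair $(T^*T^k \times T^*S, \bL(\Sigma_S) \times S)$ along a collar $\Nbd(\partial S) \subset S$, using the fanifold data that identifies the open cones of $\Sigma_S$ with the germs of higher-codimensional strata incident to $S$.

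\textbf{Main obstacle.} The central technical point is producing the Weinstein gluing along an incidence $S \subset \overline{S'}$ so that the charts \pref{eq:charts} at $S$ and at $S'$ are mutually compatible. The fanifold axioms identify $\Sigma_{S'}$ with a star quotient of a face of $\Sigma_S$, which on the symplectic side corresponds to a factorization $T^*T^{k'} \cong T^*T^k \times T^*T^{k'-k}$ and a restriction of the FLTZ Lagrangian to a subfan. I would check that, under this identification, the Lagrangian attached at $S'$ agrees, up to a Weinstein isotopy of pairs preserving the conic subanalytic structure, with the one induced from the chart at $S$ over a bicollared annular neighborhood. This should reduce to a model computation in the toric fan geometry, but the real work is bookkeeping the collars and their subanalytic trivializations consistently across the entire stratified poset of $\Phi$ so that the charts tile $\widetilde{\bfW}(\Phi)$ without overlap conflicts.

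\textbf{The map $\pi$ and property (1).} On each chart $\bL(\Sigma_S) \times S$ I would define $\pi$ by sending $(\ell, s)$ to the point of $\Phi$ lying in the stratum labelled by the cone of $\Sigma_S$ containing $\ell$; the cone $\{0\} \in \Sigma_S$ yields the full $T^d$-fiber over $s \in S$, while higher-dimensional cones of $\Sigma_S$ correspond to lower-dimensional tori sitting over adjacent higher-codimension strata. The first two bullets of (1) are then built into the construction, and the third is the content of the inductive gluing above.

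\textbf{Properties (2)--(4).} For (2), when $\Phi$ is closed the inductive procedure attaches every handle, so $\widetilde{\bL}(\Phi)$ exhausts $\Core(\widetilde{\bfW}(\Phi))$: each Liouville trajectory eventually lands on some $\bL(\Sigma_S) \times S$ because the FLTZ piece is the skeleton of $T^*T^k$ in each local chart. For (3), a subfanifold $\Phi' \subset \Phi$ truncates the inductive procedure to the strata contained in $\Phi'$, which produces a stopped Weinstein sector whose stops are the FLTZ pieces associated to cones that are trimmed off; the relative skeleton identity $\widetilde{\bL}(\Phi') = \widetilde{\bfW}(\Phi') \cap \widetilde{\bL}(\Phi)$ is then tautological. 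For (4), the polarization is transparent in each chart — fiber direction in $T^*T^d$, base direction in $T^*S$ — and the face-to-fan compatibility invoked for the main obstacle guarantees that these local polarizations agree on overlaps, yielding a well-defined global Lagrangian polarization of $\widetilde{\bfW}(\Phi)$.
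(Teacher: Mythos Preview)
Your induction runs in the opposite direction from the paper's, and this reversal introduces genuine errors in your gluing description. The paper inducts on the \emph{dimension} of strata, not codimension: the base case $\Phi_0$ is the disjoint union $\bigsqcup_P T^*\widehat{M}_P$ over the $0$-strata (points) $P$, each $\widehat{M}_P$ being the full $n$-torus and each piece already carrying the entire FLTZ Lagrangian $\bL(\Sigma_P)$; one then attaches handles $T^*\widehat{M}_{S^{(k)}}\times T^*S^{(k)}_\circ$ for $k$-dimensional strata in increasing $k$, gluing along the smooth Legendrian $\widehat{M}_{S^{(k)}}\times\del S^{(k)}_\circ$ in $\del_\infty\widetilde{\bfW}(\Phi_{k-1})$. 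The key technical step is the biconicity lemma for FLTZ Lagrangians (\pref{lem:coordinates}), which shows that $\widetilde{\bL}(\Phi_{k-1})$ is biconic along this Legendrian and locally factors as $\bL(\Sigma_{S^{(k)}})\times\del S^{(k)}_\circ$, so that the handle attachment extends the conic Lagrangian and the chart \pref{eq:charts} comes for free.

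In your reversed scheme the description of the attaching locus breaks down. Your $\Phi_{<k}$ is an \emph{open} subset of $\Phi$, not a closed subfanifold; the cones of $\Sigma_S$ index strata $S'$ of \emph{lower} codimension with $S\subset\overline{S'}$, not higher-codimensional ones; and your proposed gluing ``along a collar $\Nbd(\partial S)\subset S$'' is vacuous, since the frontier of a codimension-$k$ stratum $S$ lies in strata of codimension $>k$, not yet built in your order. What one would actually need in order to attach $T^*T^k\times T^*S$ to $\widetilde{\bfW}(\Phi_{<k})$ is a matching along the Legendrian $\del_\infty\bL(\Sigma_S)\times S$ on the new side---but $\del_\infty\bL(\Sigma_S)$ is a \emph{singular} Legendrian, a union over all nonzero cones of $\Sigma_S$, and organizing a Weinstein gluing along it amounts to redoing the paper's biconicity analysis read backwards, with no simplification. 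The paper's dimension-increasing order is precisely what makes each attaching Legendrian a single smooth torus times a boundary, and reduces the compatibility of charts across incidences $S\subset\overline{S'}$ to the recursive identity \pref{eq:recursive} inside a fixed $T^*\widehat{M}_P$.
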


The guiding principle behind their construction of
$\widetilde{\bfW}(\Phi)$
was that
the FLTZ Lagrangians
$\bL(\Sigma)$
and
the projections
$\bL(\Sigma) \to \Sigma$
should glue to yield
$\Core(\widetilde{\bfW}(\Phi))$
and
$\pi$.
\pref{thm:fibration}(4)
gives rise to canonical grading/orientation data
\cite[Section 5.3]{GPS3},
which one needs to define the partially wrapped Fukaya category
$\Fuk(\widetilde{\bfW}(\Phi), \del_\infty \widetilde{\bL}(\Phi))$.
By
\cite[Theorem 1.4]{GPS3}
we have
\begin{align*}
\Fuk(\widetilde{\bfW}(\Phi), \del_\infty \widetilde{\bL}(\Phi))
\cong
\Gamma (\widetilde{\bL}(\Phi), \msh_{\widetilde{\bL}(\Phi)})^{op}
\cong
\Gamma (\Phi, \pi_* \msh_{\widetilde{\bL}(\Phi)})^{op}
\end{align*}
where
$\msh_{\widetilde{\bL}(\Phi)}$
is a certain constructible sheaf of dg categories.
Over neighborhoods of strata,
local sections of
$\pi_* \msh_{\widetilde{\bL}(\Phi)}$
are computed in
\cite[Section 7.3]{GS2}.
The third bullet in
\pref{thm:fibration}(1)
gives rise to gluing data
\cite[Proposition 4.34]{GS1},
which one needs to determine
$\pi_* \msh_{\widetilde{\bL}(\Phi)}$.

SYZ fibrations often give candidates for mirror pairs
and
\cite[Theorem 5.4]{GS1}
is just one of such examples.
Moreover,
\cite[Corollary 5.8]{GS1}
implies that
HMS at large volume limit is obtained by the local-to-global principle.
Hence it is natural to expect that
the canonical lift of the projections
$\bL(\Sigma) \to \Sigma$
should glue to yield a version of $A$-side SYZ fibration
$\widetilde{\bfW}(\Phi) \to \Phi$,
as predicted by Gammage--Shende
\cite[Remark 4.5]{GS1}.
Indeed,
taking wrapped Fukaya categories
and
gluing of local pieces of 
$\widetilde{\bfW}(\Phi)$,
which respects the gluing of
$\bL(\Sigma)$,
intertwine in the sense of
\cite[Corollary 5.9]{GS1}.
In this paper,
we prove the following.

\begin{theorem} \label{thm:lift}
There is a filtered stratified fibration
$\bar{\pi} \colon \widetilde{\bfW}(\Phi) \to \Phi$
restricting to
$\pi$,
which is homotopic to a filtered stratified integrable system with noncompact fibers.
If the normal fan
$\Sigma_S$
associated to all stratum
$S \subset \Phi$
is proper,
then the homotopy becomes trivial. 
\end{theorem}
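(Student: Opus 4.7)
The plan is to extend $\pi$ chart-by-chart over each local model $(T^*T^d \times T^*S, \bL(\Sigma_S) \times S)$ of \pref{eq:charts}, and then to glue via the symplectomorphisms of \pref{thm:fibration}(1). On a single chart, the natural candidate is the product $\mu_S \times q_S$ of the standard cotangent momentum map $\mu_S \colon T^*T^d \to \bR^d$ and the bundle projection $q_S \colon T^*S \to S$. This is an integrable system: its $d + \dim S$ components Poisson-commute, it has noncompact fibers $T^d \times \bR^{\dim S}$, and its restriction to $\bL(\Sigma_S) \times S$ recovers the FLTZ projection into $|\Sigma_S| \times S \cong \Nbd(S) \subset \Phi$. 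The obstruction to taking $\bar{\pi}_S := \mu_S \times q_S$ outright is that $\mu_S$ takes values in all of $\bR^d$, whereas $\Phi$ locally looks only like $|\Sigma_S| \times S$. I would cure this by postcomposing with a stratified deformation retraction $\rho_S \colon \bR^d \to |\Sigma_S|$, setting $\bar{\pi}_S := (\rho_S \circ \mu_S) \times q_S$. When $\Sigma_S$ is proper, $|\Sigma_S| = \bR^d$, one may take $\rho_S = \id$, and $\bar{\pi}_S$ is itself an integrable system; otherwise the straight-line homotopy from $\rho_S$ to $\id$ produces a homotopy from $\bar{\pi}_S$ to $\mu_S \times q_S$.

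To assemble the local maps into a global filtered stratified fibration, the retractions $\rho_S$ must be chosen coherently with respect to the face structure of the normal fans and the coface relations of the stratification of $\Phi$. I would proceed by induction on the codimension of $S$: at each step extend the already-chosen $\rho_{S'}$ for lower-dimensional incident strata along the identification of $\Sigma_{S'}$ with the link, inside $\Sigma_S$, of the cone corresponding to $S'$, using the gluing data of \cite[Proposition 4.34]{GS1} that records precisely how adjacent charts $T^*T^{d'} \times T^*S'$ embed symplectically into $T^*T^d \times T^*S$. Compatibility of the $\rho_S$ then forces the local $\bar{\pi}_S$ and their straight-line homotopies to $\mu_S \times q_S$ to agree on overlaps, and the global $\bar{\pi}$ together with its homotopy to a filtered stratified integrable system emerges from the same sheaf-gluing principle underlying the construction of $\widetilde{\bfW}(\Phi)$ itself. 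The \emph{filtered} adjective reflects the stratification of $\Phi$ by codimension, which is preserved at every inductive step by construction.

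The main obstacle is precisely this coherent inductive choice of retractions $\rho_S$: it is a combinatorial problem on the poset of cones across all strata, which I expect to solve by working with a common conic refinement of the fans involved in each gluing, but the bookkeeping forms the bulk of the work. A secondary point, needed to upgrade $\bar{\pi}$ from a stratified map to a genuine stratified \emph{fibration}, is that the retraction must not spoil the product structure of \pref{eq:charts} along the $S$-direction; this is achievable by arranging $\rho_S$ to act only in the fiber variables of $T^*T^d$, but the condition has to be verified globally after gluing. Finally, one must check that the straight-line homotopy itself respects the filtered stratified structure throughout, so that when each $\Sigma_S$ is proper the homotopy is literally constant and $\bar{\pi}$ is already a filtered stratified integrable system with noncompact fibers.
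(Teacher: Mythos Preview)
Your proposal uses the same core ingredients as the paper---on each local piece the product of the cotangent momentum map and the base projection, postcomposed with a retraction onto the support of the normal fan, homotopic to the unretracted map and equal to it when the fan is proper---so the essential idea is correct. The organization, however, is different, and the paper's organization dissolves precisely the ``main obstacle'' you identify. Rather than working with the charts \pref{eq:charts} indexed by strata and trying to choose retractions $\rho_S$ coherently across the coface poset, the paper follows the handle-attachment filtration $\Phi_0 \subset \Phi_1 \subset \cdots \subset \Phi_n$ of \pref{lem:filtration}: one defines $\tilde{\pi}_0$ on $\widetilde{\bfW}(\Phi_0) = \bigsqcup_P T^*\widehat{M}_P$ as the disjoint union of fiber projections, then at step $k$ extends $\tilde{\pi}_{k-1}$ across the newly attached handles $T^*\widehat{M}_{S^{(k)}} \times T^* S^{(k)}_\circ$ by the fiber projection on the first factor and the base projection on the second. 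Because this induction literally parallels the construction of $\widetilde{\bfW}(\Phi)$ itself, compatibility over the gluing regions is inherited directly from the symplectomorphism \pref{eq:k-symplectomorphism} already established in the proof of \pref{thm:fibration}---there is no separate combinatorial problem of matching retractions, since each $\ret_k \colon M_{S^{(k)},\bR} \to \Sigma_{S^{(k)}}$ is chosen independently on its handle. Your chart-by-chart scheme would work, but it reproves coherence that the filtration gives for free.

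Two smaller points. First, your induction direction is muddled: you write ``induction on the codimension of $S$'' but then speak of already-chosen $\rho_{S'}$ for \emph{lower}-dimensional incident strata, whose fans are larger and do not embed in $\Sigma_S$; presumably you mean the higher-dimensional (lower-codimension) neighbors, whose fans are the quotients $\Sigma_S/\sigma$. Second, the paper needs one technical step you omit: before extending $\tilde{\pi}_{k-1}$ across the new handle, it precomposes with a contraction $\cont_k$ of the cylindrical ends along the negative Liouville flow, pushing everything off the gluing region where the Weinstein structure was modified. Without something like this, the map induced on the glued manifold is not obviously well defined near $J^1\cL_k$.
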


Note that
by construction of
$\widetilde{\bfW}(\Phi)$
one can expect isotrivial fibers only on each stratum of the induced stratification by
$\Phi$.
Moreover,
$\Phi$
admits a filtration
\cite[Remark 2.12]{GS1}
and
fibers would vary
when passing through the induced filters.
Since in general the canonical lift of
$\pi$
does not land in
$\Phi$,
we must compose the map induced by a retraction,
which might not be smooth,
even
$C^1$.
After that,
as suggested in
\cite[Remark 4.5]{GS1},
it remains to show topological compatibility with the handle attachment process.
Then we need to fully understand the proofs of
\pref{thm:fibration},
especially
(4)
and
the third bullet in
(1).
As their details are skipped in
\cite{GS1},
we include them for completeness.

The map
$\bar{\pi}$
is not parallel to the original $A$-side SYZ fibration which should connect
$\widetilde{\bfW}(\Phi)$
to
$\bfT(\Phi)$,
as the expected base is dual stratified space to
$\Phi$
in a certain sense.
Working over
$\Phi$
amounts to removing the requirement for the moment polytopes of
$T_\Sigma$
to glue to yield the expected base.
Indeed,
Gammage--Shende introduced fanifolds to realize more general mirror partners via such an extended version of SYZ duality.
Hence,
as mentioned in
\cite[Remark 4.5]{GS1},
if
$\Phi$
admits a well defined dual stratified space
$\Psi$,
then one expects the gluing
$\bfT(\Phi) \to \Psi$
of moment maps
to be the $B$-side SYZ fibration.
We obtain a stratified fibration
which should be its SYZ dual,
modifying the canonical lift of
$\pi$.

\begin{theorem} \label{thm:dual}
Assume that
$\Phi$
admits a dual stratified space
$\Psi$
in the sense of Definition
\pref{dfn:dual}.
Then there is a stratified fibration
$\underline{\pi} \colon \widetilde{\bfW}(\Phi) \to \Psi$.
Let
$S \subset \Phi$
be a stratum of codimension
$d$.
Over a point of its dual stratum
$S^\perp \subset \Psi$,
the fiber of
$\underline{\pi}$
is isomorphic to
$T^d \times T^* S$.
\end{theorem}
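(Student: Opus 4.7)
The plan is to construct $\underline{\pi}$ locally from the symplectic charts of Theorem~\pref{thm:fibration}(1), using the Lagrangian polarization of (4) to single out the appropriate base direction, and then to glue the local pieces using the duality of Definition~\pref{dfn:dual}.

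First I would fix a stratum $S \subset \Phi$ of codimension $d$ with normal fan $\Sigma_S$, and use the third bullet of Theorem~\pref{thm:fibration}(1) to identify a neighborhood of $\pi^{-1}(S)$ in $\widetilde{\bfW}(\Phi)$ with $T^* T^d \times T^* S$, sending $\widetilde{\bL}(\Phi)$ onto $\bL(\Sigma_S) \times S$. On this chart I set $\underline{\pi}_S$ to be the projection onto the cotangent fiber direction of $T^d$, which is exactly the direction complementary to the polarization distinguished by Theorem~\pref{thm:fibration}(4). By the duality hypothesis, a neighborhood of $S^\perp$ in $\Psi$ embeds into this $\bR^d$, and after restriction $\underline{\pi}_S$ lands in the prescribed neighborhood of $S^\perp$. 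The fiber over $p \in S^\perp$ is then visibly $T^d \times T^* S$, matching the last assertion.

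The bulk of the work is to show that the local maps $\underline{\pi}_S$ patch across strata. For $S' \subset \overline{S}$ of codimension $d' > d$, the charts around $\pi^{-1}(S')$ and $\pi^{-1}(S)$ overlap in a manner governed by the symplectomorphisms of Theorem~\pref{thm:fibration}(1) together with the gluing data of \cite[Proposition 4.34]{GS1}. Definition~\pref{dfn:dual} should be arranged so that $(S')^\perp$ sits in $\overline{S^\perp}$ as the face dual to the sub-fan of $\Sigma_{S'}$ which restricts, near $S$, to $\Sigma_S$; granting this, the projection from $T^* T^{d'} \times T^* S'$ to $\bR^{d'}$ agrees on the overlap with the projection from $T^* T^d \times T^* S$ to $\bR^d$ via the duality identification, and the local maps assemble into a continuous $\underline{\pi} \colon \widetilde{\bfW}(\Phi) \to \Psi$. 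Pulling back the stratification of $\Psi$, local triviality on each stratum is immediate from the chart-level description.

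The main obstacle is this gluing step. The two charts $T^* T^d \times T^* S$ and $T^* T^{d'} \times T^* S'$ do not map to a common Euclidean space, so the way their targets $\bR^d$ and $\bR^{d'}$ embed into the single stratified space $\Psi$ depends delicately on Definition~\pref{dfn:dual} and on how the Weinstein handle attachments used to build $\widetilde{\bfW}(\Phi)$ interact with the combinatorics of dual stratified spaces. Concretely, comparing the polarization directions across overlapping charts and verifying that they transport the codimension-$d$ slice into the corresponding codimension-$d$ stratum of $\Psi$ is where the details of the construction behind Theorem~\pref{thm:fibration} have to be tracked most carefully. Once this compatibility is in place, the assembly of $\underline{\pi}$ and the verification of the stratified fibration property are formal.
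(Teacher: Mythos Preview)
Your plan is plausible but takes a different route from the paper, and the obstacle you flag is precisely what the paper's argument is organized to avoid. The paper does \emph{not} work chart-by-chart with the symplectomorphisms of \pref{thm:fibration}(1) and then attempt to reconcile them on overlaps. Instead it proceeds by induction along the handle-attachment filtration $\Phi_0 \subset \Phi_1 \subset \cdots \subset \Phi_n$ (and the dual filtration $\Psi_0 \subset \cdots \subset \Psi_n$ of \pref{lem:dual-filtration}), defining $\underline{\pi}_k$ on $\widetilde{\bfW}(\Phi_k)$ directly from the building blocks. For the base case, $\underline{\pi}_0$ is simply the disjoint union of the cotangent-fiber projections $T^* \widehat{M}_P \to M_{P,\bR} \cong P^\perp$. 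For the inductive step, one takes the new handle $T^* \widehat{M}_{S^{(k)}} \times T^* S^{(k)}_\circ$, uses the cotangent-fiber projection $T^* \widehat{M}_{S^{(k)}} \to M_{S^{(k)},\bR} \cong S^{(k),\perp}$ on the first factor, and then \emph{contracts the image of $S^{(k)}_\circ$} so that the whole handle lands in the stratum $S^{(k),\perp}$. This contraction is the key move you are missing: it is what produces fibers $T^d \times T^* S$ rather than $T^d$, and it is what makes the map land in $\Psi_k$ rather than in some auxiliary $\bR^d$ that then has to be matched across charts. Because each inductive step only glues along the explicitly understood region $J^1 \cL_k$, the compatibility you worry about between $\bR^d$ and $\bR^{d'}$ never has to be checked directly; it is absorbed into the already-verified compatibility of $\tilde{\pi}_{k-1}$ with the handle attachment (Remark~\pref{rmk:k-tilde}) together with condition~(vi).

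Your chart-based approach could in principle be made to work, but unwinding the charts \pref{eq:charts} back to their construction in \pref{lem:k-original} would essentially reproduce the paper's inductive argument, and the ``delicate'' overlap analysis you anticipate would amount to re-deriving the filtration-compatibility that the paper gets for free. The paper's method also makes the global description of $\underline{\pi}$ transparent (Remark~\pref{rmk:k-underline}): it is the gluing of the moment-map-type projections $T^* \widehat{M}_P \to P^\perp$ along the inclusions $\widehat{M}_{S^{(k)}} \hookrightarrow \cdots \hookrightarrow \widehat{M}_P$ coming from Definition~\pref{dfn:fanifold}.
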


Finally,
we provide an evidence for
$\underline{\pi}$
to be the $A$-side SYZ fibration for
$(\widetilde{\bfW}(\Phi), \bfT(\Phi))$.
Consider the fanifold
$\Phi = \Sigma \cap S^{n+1}$
from
\cite[Example 4.22]{GS1}
generalized as in
\cite[Section 6]{GS1}.
It is associated with the mirror pair
$(H, \del \bfT_\Sigma)$
\cite[Theorem 1.0.1]{GS2}
in the sense that
$\Core(\widetilde{\bfW}(\Phi))
\cong
\Core(H)$
and
$\bfT(\Phi) = \del \bfT_\Sigma$
for
a very affine hypersuface
$H \subset (\bC^*)^{n+1}$
and
the toric boundary divisor
$\del \bfT_\Sigma$
of the toric stack
$\bfT_\Sigma$.
The fanifold
$\Phi$
admits a dual stratified space
$\Psi$.
We show that
the SYZ dual fibrations for
$(H, \del \bfT_\Sigma)$
over the tropical hypersurface
$\Pi_\Sigma$
of
$H$
\cite[Section 3]{AAK}
restricts to that for
$(\widetilde{\bfW}(\Phi), \bfT(\Phi))$
over
$\Psi$
in the following sense.

\begin{theorem} \label{thm:restriction}
Let
$\Phi$
be the fanifold from
\cite[Example 4.22]{GS1}
generalized as in
\cite[Section 6]{GS1}.
Then there are
a stratified homeomorphism
$\Psi \hookrightarrow \Pi_\Sigma$,
a symplectomorphism of pairs
$(\widetilde{\bfW}(\Phi), \widetilde{\bL}(\Phi))
\hookrightarrow
(H, \Core(H))$
and
a map
$\bfT(\Phi) \to \Psi$
which makes the diagram
\begin{align*}
\begin{gathered}
\xymatrix{
\widetilde{\bL}(\Phi) \ar@{^{(}->}[r] \ar@{=}[d] & \widetilde{\bfW}(\Phi) \ar^{\underline{\pi}}[r] \ar@{^{(}->}[d] & \Psi \ar@{^{(}->}[d] & \bfT(\Phi) \ar[l] \ar@{^{(}->}[d] \\
\Core(H) \ar@{^{(}->}[r] & H \ar[r]_{} & \Pi_\Sigma & -K_{\bfT_\Sigma} \ar[l],
}
\end{gathered}
\end{align*}
commute,
where
$H \to \Pi_\Sigma \leftarrow -K_{\bfT_\Sigma}$
are canonical extensions to toric stacks of SYZ fibrations from
\cite[Section 3]{AAK}.
\end{theorem}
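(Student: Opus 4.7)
The plan is to assemble the statement in three stages: identify $\Psi$ as a closed subset of $\Pi_\Sigma$, upgrade the identification of skeleta coming from \cite[Theorem 1.0.1]{GS2} to a symplectomorphism of Weinstein pairs, and match the two fibrations stratum by stratum.

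First I would spell out the combinatorics. The fanifold $\Phi = \Sigma \cap S^{n+1}$ has strata indexed by the cones of $\Sigma$, and its dual stratified space $\Psi$ assigns to each stratum $S$ a dual cell $S^\perp$ of complementary dimension. On the other side, the tropical hypersurface $\Pi_\Sigma$ of $H \subset (\bC^*)^{n+1}$ carries its standard polyhedral stratification, whose cells are also indexed by the cones of $\Sigma$. Matching these two combinatorial labelings and using the piecewise-linear embedding of the dual cell complex into the tropical amoeba produces the stratified homeomorphism $\Psi \hookrightarrow \Pi_\Sigma$ onto the bounded part of $\Pi_\Sigma$; this is essentially the content of \cite[Example 4.22]{GS1} and its generalization in \cite[Section 6]{GS1}, and I would cast it in a form compatible with the dual stratification of \pref{thm:dual}.

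Next, recall that \cite[Theorem 1.0.1]{GS2} gives $\Core(\widetilde{\bfW}(\Phi)) \cong \Core(H)$ as conic Lagrangians, and that both $\widetilde{\bfW}(\Phi)$ and $H$ are subanalytic Weinstein manifolds-with-boundary. Because $\Core(H)$ has a Weinstein neighborhood unique up to Weinstein homotopy and because $\widetilde{\bfW}(\Phi)$ by construction agrees, around each stratum, with the local model \pref{eq:charts}, I would promote the skeletal identification to a symplectomorphism of pairs $(\widetilde{\bfW}(\Phi), \widetilde{\bL}(\Phi)) \hookrightarrow (H, \Core(H))$ via the Weinstein neighborhood theorem, using that the local FLTZ charts in \pref{thm:fibration}(1) are precisely the charts produced by the Lefschetz-type/admissible Lagrangian model used in \cite[Section 3]{AAK} to trivialize $H$ near the tropical strata.

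For the right square, $\bfT(\Phi) = \del \bfT_\Sigma$ by \cite[Theorem 1.0.1]{GS2}, and the canonical extension of the AAK fibration $-K_{\bfT_\Sigma} \to \Pi_\Sigma$ restricts on $\del \bfT_\Sigma$ to a map which, under the embedding $\Psi \hookrightarrow \Pi_\Sigma$, lands in $\Psi$; this defines $\bfT(\Phi) \to \Psi$. Commutativity of the left square then reduces, stratum by stratum, to the observation that on a neighborhood of $\pi^{-1}(S) \cong T^d \times S$ the map $\underline{\pi}$ projects away from the $T^d$-factor and retracts onto $S^\perp$, which is exactly the local behavior of the AAK fibration over the tropical cell dual to $\sigma$; commutativity of the right square is the analogous toric statement for $\bfT_\Sigma$.

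The main obstacle will be the middle step: ensuring that the symplectomorphism of pairs is compatible with the fibrations $\underline{\pi}$ and AAK's $H \to \Pi_\Sigma$ on the nose, rather than only up to homotopy. The inductive handle-attachment construction of $\widetilde{\bfW}(\Phi)$ follows the same combinatorial template as AAK's construction of $H$, both being dictated by $\Sigma$, so the symplectomorphism exists in principle; but one has to check that the retractions used to define $\underline{\pi}$ in the proof of \pref{thm:dual} can be chosen to coincide with AAK's retraction onto $\Pi_\Sigma$ near each stratum, and to patch across strata. This is the technical core of the argument, and I would handle it by induction on the codimension of strata in $\Phi$, invoking at each step the local model \pref{eq:charts} together with the matching local model for $H$ in \cite[Section 3]{AAK}.
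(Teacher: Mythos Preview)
Your three-stage strategy matches the paper's outline, but there is a genuine gap in the middle step. You propose to promote the skeletal identification $\Core(\widetilde{\bfW}(\Phi)) \cong \Core(H)$ to a symplectomorphism of pairs by invoking ``the Weinstein neighborhood theorem'' and the claim that ``$\Core(H)$ has a Weinstein neighborhood unique up to Weinstein homotopy.'' For a smooth Lagrangian this is the standard Weinstein neighborhood theorem, but $\Core(H)$ and $\widetilde{\bL}(\Phi)$ are singular conic Lagrangians (unions of pieces of varying dimension), and no such uniqueness result is available in that generality. The paper does not appeal to any abstract neighborhood theorem; instead it uses the pants decomposition of $H$ from \cite{Mik} and \cite[Theorem 6.2.4]{GS2}, together with the explicit computation $\Core(\tilde{P}^b_{\Delta^\vee_\sigma}) = \Log^{-1}(\del^0 \tilde{A}^b_{\Delta^\vee_\sigma}) \cap \bL(-\Sigma_\sigma)$ of \cite[Lemma 5.3.6]{GS2}, to identify a neighborhood of each stratum of $\Core(H)$ concretely with the cotangent bundle $T^* \widehat{M}_{S^{(k)}_P} \times T^* S^{(k)}_P$. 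These local identifications, produced by cotangent scaling, are then checked to glue compatibly with the handle-attachment structure of $\widetilde{\bfW}(\Phi)$; the analysis of how $\ret \circ \Log$ projects the tori in $\del_\infty \bL(-\Sigma_\sigma)$ onto strata of $\Phi$ transported to $M^\vee_\bR$ goes through unwinding the proof of \cite[Theorem 5.13]{Nad}, not through anything in \cite{AAK}.

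A secondary point: the local models for $H$ near tropical strata do not come from \cite[Section 3]{AAK} (which concerns Lagrangian fibrations on blowups of toric varieties) but from the tailored-pants machinery of \cite{Mik} and \cite[Section 5]{GS2}. Your induction-on-codimension plan for compatibility of $\underline{\pi}$ with the AAK fibration is in the right spirit, but without the explicit pants computation you cannot verify that the retractions agree. The paper's argument hinges on identifying $\Psi$ specifically with $\del \Pi^0_\Sigma$, the boundary of the component of $\bR^{n+1} \setminus \Pi_\Sigma$ corresponding to the origin, and then modifying $h_\Psi \circ \ret \circ \Log$ by a further retraction $\Pi_\Sigma \to \del \Pi^0_\Sigma$ before composing with $h_\Psi$; this is how the middle square is made to commute on the nose rather than only up to homotopy.
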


It turns out that
\pref{thm:lift}
has an interesting application.
Namely,
$\bar{\pi}$
provides a key ingredient to the construction of a Weinstein sectorial cover of
$\widetilde{\bfW}(\Phi)$,
which is compatible with the corresponding closed cover of
$\bfT(\Phi)$
\cite[Theorem 1.2]{Mor}.
As a consequence,
via sectorial descent we obtain a description of HMS for
$(\widetilde{\bfW}(\Phi), \bfT(\Phi))$
as an isomorphism of cosheaves of categories.
We wonder
if the same idea would work for more general SYZ fibrations,
which might give a path from SYZ to homological mirrors as follows,
provided HMS for local pieces.
First,
given enough good SYZ fibrations,
take dual stratified space to the base.
Rewinding the process to obtain
$\underline{\pi}$
from
$\bar{\pi}$,
one modifies the original fibrations to that over the dual. 
Lift a suitable cover of the dual along the modified $A$-side fibration to obtain a Weinstein sectorial cover.
Then carefully glue HMS for local pieces to obtain a global HMS. 
 
\begin{acknowledgements}
The author was supported by SISSA PhD scholarships in Mathematics.
He is grateful to an anonymous referee for
careful reading
and
helpful comments.
This work was partially supported by
JSPS KAKENHI Grant Number
JP23KJ0341.
\end{acknowledgements}

\section{Fanifolds}
Fanifolds are introduced by Gammage--Shende in
\cite[Section 2]{GS1}
as a formulation of stratified manifolds for
which the geometry normal to each stratum is equipped with the structure of a fan. 
Let
$\Phi$
be a stratified space.
As in
\cite{GS1},
throughout the paper,
we assume
$\Phi$
to satisfy the following conditions.

\begin{itemize}
\item[(i)]
$\Phi$
has finitely many strata.
\item[(ii)]
$\Phi$
is conical in the complement of a compact subset. 
\item[(iii)]
$\Phi$
is given as a germ of a closed subset in an ambient manifold
$\cM$.
\item[(iv)]
The strata of
$\Phi$
are smooth submanifolds of
$\cM$.
\item[(v)]
The strata of
$\Phi$
are contractible.
\end{itemize}

We will express properties of
$\Phi$
in terms of the chosen ambient manifold
$\cM$
as long as they only depend on the germ of
$\Phi$.
Taking the normal cone
$C_S \Phi \subset T_S \cM$
for each stratum
$S \subset \Phi$,
one obtains a stratification on
$C_S \Phi$
induced by the canonical stratification of a sufficiently small tubular neighborhood
$T_S \cM \to \cM$. 
We will call it the
\emph{canonically induced stratification}
by
$\cM$.

\begin{definition}
The stratified space
$\Phi$
is
\emph{smoothly normally conical}
if for each stratum
$S \subset \Phi$
its sufficiently small tubular neighborhood
$T_S \cM \to \cM$
induces locally near
$S$
a stratified diffeomorphism
$C_S \Phi \to \Phi$,
which in turn induces the identity
$C_S \Phi \to C_S \Phi$
where the target is equipped with the canonically induced stratification by
$\cM$.
\end{definition}

\begin{example} \label{eg:SMCSS}
Associating to each cone its interior as a stratum,
one may regard a fan
$\Sigma$
of cones as a stratified space satisfying the conditions
(i), ... , (v).
Clearly,
$\Sigma$
is smoothly normally conical.
By abuse of notation,
we use the same symbol 
$\sigma \in \Sigma$
to denote the stratum of
$\Sigma$
corresponding to
$\sigma$.
We introduce a partial order in
$\Sigma$
canonically descending to the stratified space. 
Namely, 
for
$\sigma, \tau \in \Sigma$
we define
$\sigma < \tau$
if and only if
$\sigma \subset \bar{\tau}$
for the closure
$\bar{\tau}$
of
$\tau$
in
$\Sigma$.
\end{example}

\begin{definition}
We write
$\Exit(\Phi)$
for the exit path category.
For each stratum
$S \subset \Phi$
we write
$\Exit_S(\Phi)$
for the full subcategory of exit paths starting at
$S$
contained inside a sufficiently small neighborhood of
$S$.
\end{definition}

\begin{definition}
We write
$\Fan^\twoheadrightarrow$
for the category whose objects are pairs
$(M, \Sigma)$
of a lattice
$M$
and
a stratified space
$\Sigma$
by finitely many rational polyhedral cones in
$M_\bR = M \otimes_\bZ \bR$.
For any
$(M, \Sigma), (M^\prime, \Sigma^\prime) \in \Fan^\twoheadrightarrow$
a morphism
$(M, \Sigma) \to (M^\prime, \Sigma^\prime)$
is given by
the data of a cone
$\sigma \in \Sigma$
and
an isomorphism
$M / \langle \sigma \rangle \cong M^\prime$
such that
$\Sigma^\prime
=
\Sigma / \sigma
=
\{ \tau / \langle \sigma \rangle \subset M_\bR / \langle \sigma \rangle \  | \ \tau \in \Sigma, \sigma \subset \bar{\tau} \}$.
We denote by
$\Fan^\twoheadrightarrow_{\Sigma /}$
for an object
$(M, \Sigma) \in \Fan^\twoheadrightarrow$
the full subcategory of objects
$(M^\prime, \Sigma^\prime)$
with
$\Sigma^\prime = \Sigma / \sigma$
for some cone
$\sigma \in \Sigma$.
\end{definition}

We have a natural identification
\begin{align*}
\begin{gathered}
\Exit(\Sigma) \cong \Fan^\twoheadrightarrow_{\Sigma /}, \
\sigma \mapsto [\Sigma \mapsto \Sigma / \sigma]
\end{gathered}
\end{align*}
of posets.
In addition,
the normal geometry to
$\sigma$
is the geometry of
$\Sigma / \sigma$.
This is the local model of fanifolds introduced by
Gammage--Shende.

\begin{definition}[{\cite[Definition 2.4]{GS1}}] \label{dfn:fanifold}
A
\emph{fanifold}
is a smoothly normally conical stratified space 
$\Phi \subset \cM$
satisfying the conditions
(i), ..., (v)
and
equipped with the following data:
\begin{itemize}
\item
A functor
$\Exit(\Phi) \to \Fan^\twoheadrightarrow$
whose value on each stratum
$S$
is a pair
$(M_S, \Sigma_S)$
of a lattice
$M_S$
and
a rational polyhedral fan
$\Sigma_S \subset M_{S, \bR}$
called the
\emph{associated normal fan}.
\item
For each stratum
$S \subset \Phi$
a trivialization
$\phi_S \colon T_S \cM \cong M_{S, \bR}$
of the normal bundle carrying the induced stratification on
$C_S \Phi$
to the standard stratification induced by
$\Sigma_S$.
\end{itemize}
These data must make the diagram
\begin{align*}
\begin{gathered}
\xymatrix{
T_S \cM \ar[r]^-{\phi_S} \ar_{}[d] & M_{S, \bR} \ar^{}[d] \\
T_{S^\prime} \cM |_S \ar[r]_-{\phi_{S^\prime}} & M_{S^\prime, \bR},
}
\end{gathered}
\end{align*}
commute for any stratum
$S^\prime$
of the induced stratification on
$\Nbd(S)$,
where the left vertical arrow is the quotient by the span of
$S^\prime$.
The right vertical arrow corresponds to the map
$M_S \to M_{S^\prime}$ 
on lattices.
\end{definition}

\begin{remark}
In the original definition,
the conditions
(i), ..., (iv)
were assumed in advance
and
the condition
(v)
was added later
\cite[Assumption 2.5]{GS1}.
Due to
(v),
$\Exit_S(\Phi)$
is equivalent to the poset
$\Exit(\Sigma_S)$.
Although we have already assumed
$\Phi$
to satisfy
(i), ..., (v)
above,
here we mention it again to emphasize the difference from the original form. 
\end{remark}

\begin{example} \label{eg:trivial}
A contractible manifold
$\cM$
regarded as a trivially stratified space is obviously smoothly normally conical satisfying (i), ..., (v).
Associating to the unique stratum
$\cM$
a pair
$(M_\cM = \{ 0 \}, \Sigma_\cM = \{ 0 \})$
defines a trivial fanifold structure on
$\cM$.
It follows that
the product of
a contractible manifold
and
a fanifold
is canonically a fanifold.
\end{example}


\begin{example}[{\cite[Example 2.7]{GS1}}] \label{eg:fan}
As explained in Example
\pref{eg:SMCSS},
a fan
$\Sigma \subset \bR^n$
of cones regarded as a stratified space
is smoothly normally conical
and
satisfies
(i), ..., (v).
Associating to each stratum
$\sigma \in \Sigma$
a pair
$(M / \langle \sigma \rangle, \Sigma / \sigma)$
of the quotient lattice
$M / \langle \sigma \rangle$
and
the normal fan
$\Sigma / \sigma$
defines a fanifold structure on
$\Sigma$.
\end{example}

\begin{example}[{\cite[Example 2.10]{GS1}}] \label{eg:hypersurface}
Given a fanifold
$\Phi \subset \cM$,
if a submanifold
$\cM^\prime \subset \cM$
intersects transversely all strata of
$\Phi$,
then
$\Phi \cap \cM^\prime \subset \cM^\prime$
canonically inherits the fanifold structure.
In particular,
the ideal boundary
$\del_\infty \Phi$
of the fanifold carries a canonical fanifold structure. 
\end{example}

\begin{lemma}[{\cite[Remark 2.12]{GS1}}] \label{lem:filtration}
Let
$\Phi$
be a fanifold of dimension
$n$.
Then it admits a filtration
\begin{align} \label{eq:filtration1}
\Phi_0 \subset \Phi_1 \subset \cdots \subset \Phi_n = \Phi
\end{align}
where
$\Phi_k$
are subfanifolds defined as sufficiently small neighborhoods of $k$-skeleta
$\Sk_k(\Phi)$,
the closure of the subset of $k$-strata.
\end{lemma}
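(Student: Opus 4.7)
The plan is to define $\Sk_k(\Phi)$ as the closure in $\Phi$ of the union of strata of dimension exactly $k$, take $\Phi_k$ to be a sufficiently small open tubular neighborhood of $\Sk_k(\Phi)$ assembled from the local data supplied by the trivializations $\phi_S$, and then verify the ascending chain $\Sk_0(\Phi) \subset \Sk_1(\Phi) \subset \cdots \subset \Sk_n(\Phi) = \Phi$ together with the fact that each $\Phi_k$ inherits the structure of a subfanifold. Finiteness of the stratification (condition (i)) allows the tubular neighborhood choices to be made compatibly.

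The first key step is to establish the nesting of skeleta. Let $S \subset \Phi$ be a stratum of dimension $k < n$. By \pref{dfn:fanifold} the trivialization $\phi_S$ sends the induced stratification on $C_S \Phi$ to the standard stratification of $M_{S,\bR}$ by the rational polyhedral fan $\Sigma_S$; under the smoothly normally conical identification, strata of $\Phi$ locally near $S$ correspond to cones $\sigma \in \Sigma_S$, with the stratum associated to $\sigma$ having dimension $\dim S + \dim \sigma$. Since $\dim S < n$, the fan $\Sigma_S$ must contain a ray, whose associated stratum has dimension $k+1$ and has $S$ in its closure. This yields $\Sk_k(\Phi) \subset \Sk_{k+1}(\Phi)$, and, iterating, $\Sk_n(\Phi) = \Phi$.

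For the second step, for each stratum $S$ of dimension at most $k$ fix a tubular neighborhood $U_S \subset \cM$ whose intersection with $\Phi$ is modeled on an open cone neighborhood of the zero section in $C_S \Phi$ via $\phi_S$, and set $\Phi_k := \bigcup_{\dim S \leq k}(U_S \cap \Phi)$. Taking the $U_S$ small enough and using condition (i) yields an open subset of $\Phi$ that deformation retracts onto $\Sk_k(\Phi)$ along the normal cone directions. The strata of $\Phi_k$ are the connected components of the intersections $T \cap \Phi_k$ for strata $T$ of $\Phi$, and the functor $\Exit(\Phi_k) \to \Fan^\twoheadrightarrow$, the trivializations $\phi_S$, and the smoothly normally conical property are all inherited by restriction, so $\Phi_k$ is a subfanifold. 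The main obstacle is ensuring compatibility of the $U_S$ across strata of different dimensions, so that the restricted trivializations fit into the commutative square in \pref{dfn:fanifold} on overlaps; this is handled by choosing $U_S$ first for the lowest-dimensional strata and then enlarging outward, using the conical structure to propagate the identifications.
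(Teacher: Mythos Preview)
Your argument is broadly correct but takes a different route from the paper. The paper builds $\Phi_k$ \emph{inductively} rather than as a direct union: it sets $\Phi_0 = \bigsqcup_P \Sigma_P$ over all $0$-strata, and then for each $k$ defines
\[
\Phi_k \;=\; \Phi_{k-1} \;\#_{\bigsqcup_S (\Sigma_S \times \del_{in} S)}\; \bigsqcup_S (\Sigma_S \times S),
\]
gluing the product fanifolds $\Sigma_S \times S$ (which are fanifolds by Examples~\ref{eg:trivial} and~\ref{eg:fan}) onto $\Phi_{k-1}$ along the interior boundary $\del_{in} S$. This gluing description is not incidental: it is the precise template along which $\widetilde{\bfW}(\Phi_k)$ is later built by Weinstein handle attachment, $\widetilde{\bL}(\Phi_k)$ by extension through the handle, and $\pi_k$ by patching projections. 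Your union-of-tubular-neighborhoods description is equivalent as a set, but it does not isolate the new piece $\Sigma_S \times S_\circ$ attached at step $k$, which is what the rest of the paper relies on.

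One small gap: your claim that ``since $\dim S < n$, the fan $\Sigma_S$ must contain a ray'' is not justified by the axioms of a fanifold and can fail (nothing forbids an isolated lower-dimensional stratum whose normal fan is $\{0\}$). Fortunately this nesting of skeleta is not actually needed: the inclusion $\Phi_{k-1} \subset \Phi_k$ is immediate from either your union or the paper's gluing, and $\Phi_n = \Phi$ follows because every stratum lies in its own tubular neighborhood. The paper's proof never invokes $\Sk_k \subset \Sk_{k+1}$.
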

\begin{proof}
The normal geometry to a $0$-stratum
$P \subset \Phi$
is the geometry of the normal fan
$\Sigma_P \subset M_{P, \bR}$
by definition.
Let
\begin{align*}
\Phi_0 = \bigsqcup_P \Sigma_P
\end{align*}
be the disjoint union of
$\Sigma_P$
equipped with the canonically induced fanifold structure,
where
$P$
runs through all $0$-strata
of
$\Phi$.
Then
$\Phi_0 \subset \Phi$
is clearly a subfanifold containing
$\Sk_0(\Phi)$.


Suppose that
$\Phi_{k-1}$
admits the desired filtration
$\Phi_0 \subset \Phi_1 \subset \cdots \subset \Phi_{k-1}$.
The normal geometry to a $k$-stratum
$S \subset \Phi$
is the geometry of the normal fan
$\Sigma_S \subset M_{S, \bR}$
by definition.
The ideal boundary
$\del_\infty S$
might have some subset
$\del_{in} S$
which is in the direction to the interior of
$\Phi$.
Perform the gluing
$\Phi_{k-1} \#_{(\Sigma_S \times \del_{in} S)} (\Sigma_S \times S)$
which equals
$\Sigma_S \times S$
when
$\Phi_{k-1}$
is empty
and
equals
$\Phi_{k-1}$
unless
$S$
is an
\emph{interior}
$k$-stratum,
i.e.
$\del_{in} S = \del S_\circ$
where
$S_\circ = S \setminus \Phi_{k-1}$
is a manifold-with-boundary.
Note that
by Example
\pref{eg:hypersurface}
the induced fanifold structure on
$\del_{in} S$
is compatible with that of
$\Phi_{k-1}$.
Let
\begin{align*}
\Phi_k = \Phi_{k-1} \#_{\bigsqcup_S \Sigma_S \times \del_{in} S} \bigsqcup_S (\Sigma_S \times S)
\end{align*}
be the result of such gluings for all $k$-strata
$S$
in
$\Phi$.
Then
$\Phi_k \subset \Phi$
is a subfanifold containing
$\Sk_k(\Phi)$
since the products
$\Sigma_S \times S$
are canonically fanifolds
by Example
\pref{eg:trivial},
\pref{eg:fan}.
\end{proof}

\begin{definition}
A fanifold
$\Phi \subset \cM$
is
\emph{closed}
if its all strata are interior.
Namely,
any $k$-stratum
$S$
of
$\Phi$
satisfies
$\del_{in} S = \del S_\circ$
where
$S_\circ = S \setminus \Phi_{k-1}$. 
\end{definition}

\begin{example}[{\cite[Example 4.21]{GS1}}]  \label{eg:closed}
Consider a $2$-dimensional fanifold
$\Phi = [0,1] \times [0,1]$
stratified by
vertices
$P_1 = (0,1),
P_2 = (0,0),
P_3 = (1,0),
P_4 = (1,1)$,
edges
$I_{12} = \{ 0 \} \times (0,1),
I_{23} = (0,1) \times \{ 0 \},
I_{34} = \{ 1 \} \times (0,1),
I_{14} = (0,1) \times \{ 1 \}$
and
a face
$F = (0,1) \times (0,1)$.
All strata are interior
and
$\Phi$
is closed.
The filtration from
\pref{lem:filtration}
is given by
\begin{align} \label{eq:filters}
\Phi_0 = \bigsqcup^4_{i = 1}\Sigma_{P_i}, \
\Phi_1 = \Phi_0 \#_{\bigsqcup_{1 \leq i < j \leq 4} \Sigma_{I_{ij}} \times \del_{in} I_{ij}} \bigsqcup_{1 \leq i < j \leq 4} (\Sigma_{I_{ij}} \times I_{ij}), \
\Phi_2 = \Phi_1 \#_{\del_{in} F} F
\end{align}
where
$\Sigma_{P_i}, \Sigma_{I_{ij}}$
are the fans
$\Sigma_{\bA^2} \subset \bR^2, \Sigma_{\bA^1} \subset \bR$.
We place
the origins of
$\Sigma_{P_i}$
at
$P_i$
and
that of
$\Sigma_{I_{ij}}$
along
${I_{ij}}$
so that
$\Sigma_{P_i} \cap F \subset \Sigma_{P_i}$
and
$\Sigma_{I_{ij}} \cap F \subset \Sigma_{I_{ij}}$.
\end{example}

As explained in
\cite[Section 6]{GS1},
one can generalize fanifolds in terms of stacky fans.
First,
we recall the definition of stacky fans.

\begin{definition}[{\cite[Definition 2.4]{GS15}}]
A
\emph{stacky fan}
is the data of a map of lattices
$\beta \colon \tilde{M} \to M$
with finite cokernel,
together with fans
$\tilde{\Sigma} \subset \tilde{M}_\bR$
and
$\Sigma \subset M_\bR$
such that
$\beta$
induces a combinatorial equivalence on the fans.
\end{definition} 

Stacky fans form a category
$\stFan^\twoheadrightarrow$.
Morphisms
\begin{align*}
(M, \tilde{M}, \Sigma, \tilde{\Sigma})
\to
(M^\prime, \tilde{M}^\prime, \Sigma^\prime, \tilde{\Sigma}^\prime)
\end{align*}
are given by the choices of cones
$\tilde{\sigma}$
with
$\beta(\tilde{\sigma}) = \sigma$
and
compatible isomorphisms
\begin{align*}
(M / \langle \sigma \rangle, \tilde{M} / \langle \tilde{\sigma} \rangle, \Sigma / \sigma, \tilde{\Sigma} / \tilde{\sigma})
\cong
(M^\prime, \tilde{M}^\prime, \Sigma^\prime, \tilde{\Sigma}^\prime).
\end{align*}
Now,
replace the functor
$\Exit(\Phi) \to \Fan^\twoheadrightarrow$
in Definition
\pref{dfn:fanifold}
with
$\Exit(\Phi) \to \stFan^\twoheadrightarrow$.
Defining the
\emph{associated normal fan}
as
$\Sigma_S$
for each value
$(M_S, \tilde{M}_S, \Sigma_S, \tilde{\Sigma}_S)$,
one obtains the generalization.

\section{Weinstein handle attachments}
The Weinstein manifold-with-boundary
$\widetilde{\bfW}(\Phi)$
is obtained by inductively attaching products of cotangent bundles of
real tori
and
strata of
$\Phi$.
Each step requires us to modify Weinstein structures near gluing regions.
In order to show the compatibility of such modifications with candidate maps,
we need to fully understand the attachment process. 

\subsection{Weinstein manifolds}
\begin{definition}[{\cite[Section 11.1]{CE}}, {\cite[Section 1]{Eli}}]
A
\emph{Liouville domain}
$(W, \lambda)$
is a compact symplectic manifold
$(W, \omega = d \lambda)$
with smooth boundary
$\del W$
whose Liouville vector field
$Z = \omega^\# \lambda$
points outwardly along
$\del W$.
We call the positive half
$(\del W \times \bR_{\geq 0}, e^t (\lambda |_{\del W}))$
of the symplectization of
$\del W$
with Liouville form
$e^t (\lambda |_{\del W})$
the
\emph{cylindrical end}.
Any Liouville domain
$(W, \lambda)$
can be completed to a
\emph{Liouville manifold}
$(\widehat{W}, \hat{\lambda})$
by attaching the cylindrical end,
i.e.
$\widehat{W}
=
W \cup (\del W \times \bR_{\geq 0})$,
$\hat{\lambda} |_W
=
\lambda$
and
$\hat{\lambda} |_{\del W \times \bR_{\geq 0}}
=
e^t (\lambda |_{\del W})$.
\end{definition}

\begin{remark}
Strictly speaking,
the completion
$(\widehat{W}, \hat{\lambda})$
is a
\emph{Liouville manifold of finite type}.
Every Liouville manifold of finite type is the completion of some Liouville domain.
Throughout the paper,
following
\cite[Section 4]{GS1},
by a
\emph{Liouville manifold}
we will mean the completion of some Liouville domain.
Note that
the skeleton,
which will be defined below,
of a Liouville manifold of finite type is compact.
\end{remark}

\begin{definition}[{\cite[Section 11.1]{CE}}, {\cite[Definition 4.8]{GS1}}]
Let
$(W, \lambda)$
be a Liouville manifold.
Its
\emph{skeleton}
is the attractor
\begin{align*}
\Core(W)
=
\bigcap_{t > 0} Z^{-t}(W)
\end{align*}
of the negative Liouville flow
$Z^{-t}$.
Equivalently,
$\Core(W)$
is the union of all stable manifolds,
i.e.
the maximal compact subset invariant under the Liouville flow.
We denote by
$([W], [\lambda])$
a Liouville domain
which
completes to
$(W, \lambda)$
and
contains
$\Core(W)$.
The
\emph{ideal boundary}
$\del_\infty W$
of
$W$
is the intersection
$W \cap \del [W]$.
For a subset
$\cL$
of
$\del_\infty W$
the
\emph{relative skeleton}
$\Core(W, \cL)$
of
$(W, \lambda)$
associated with
$\cL$
is the union
\begin{align*}
\Core(W) \cup \bR \cL \subset W
\end{align*}
of
$\Core(W)$
and
the saturation of
$\cL$
by the Liouville flow.
\end{definition}

\begin{definition}[{\cite[Definition 11.10]{CE}}, {\cite[Section 1]{Eli}}]
A
\emph{Weinstein domain}
$(W, \lambda, \phi)$
is a Liouville domain
$(W, \lambda)$
whose Liouville vector field is gradient-like for a Morse--Bott function
$\phi \colon W \to \bR$
which is constant on
$\del W$.
We call the positive half
$(\del W \times \bR_{\geq 0}, e^t (\lambda |_{\del W}), \phi)$
of the symplectization of
$\del W$
with
Liouville form
$e^t (\lambda |_{\del W})$
and
canonically extended
$\phi$
the
\emph{cylindrical end}.
Any Weinstein domain
$(W, \lambda, \phi)$
can be completed to a
\emph{Weinstein manifold}
$(\widehat{W}, \hat{\lambda}, \hat{\phi})$
by attaching the cylindrical end.
\end{definition}

\begin{remark}
The completion
$(\widehat{W}, \hat{\lambda}, \hat{\phi})$
is a
\emph{Weinstein manifold of finite type},
i.e.
$\phi$
has only finitely many critical points.
Every Weinstein manifold of finite type is the completion of some Weinstein domain.
Throughout the paper,
following
\cite[Definition 5.5]{Nad},
by a
\emph{Weinstein manifold}
we will mean the completion of some Weinstein domain.
Note that
in view of
\cite[Example 4.23]{GS1}
one may regard mirror symmetry established in
\cite{GS1}
as a generalization of that in
\cite{GS2},
which relies on
\cite[Theorem 5.13]{Nad}.
\end{remark}

\begin{remark}
For a Weinstein manifold
$(W, \lambda, \phi)$
the skeleton
$\Core(W)$
is isotropic by
\cite[Lemma 11.13(a)]{CE}.
Moreover,
the stable manifold of the critical locus of
$\phi$
contains the zero locus of
$Z$.
\end{remark}

\begin{example}[{\cite[Definition 11.12(2)(3)]{CE}}] \label{eg:Weinstein}
Consider the cotangent bundle
$T^* Y$
of a closed manifold
$Y$
with the standard symplectic form
$\omega_{st} = d \lambda_{st}$
where
$\lambda_{st} = pdq$
is the standard Liouville form.
The associated Liouville vector field
$Z_{st} = p \del_p$
is gradient-like for a Morse--Bott function
$\phi_{st}(q, p) = \frac{1}{2} |p|^2$.
The product 
$T^* Y \times T^* Y^\prime$
of two such Weinstein manifolds with
symplectic form
$\omega_{st} \oplus \omega^\prime_{st}$,
Liouvlle form
$\lambda_{st} \oplus \lambda^\prime_{st}$
and
Morse--Bott function
$\phi_{st} \oplus \phi^\prime_{st}$ 
is a Weinstein manifold.
\end{example}

\subsection{Weinstein pairs}
\begin{definition}[{\cite[Section 2]{Eli}}] 
Let
$(Y, \xi)$
be a contact manifold.
We call a codimension
$1$
submanifold
$H \subset Y$
with smooth boundary a
\emph{Weinstein hypersurface}
if there exists a contact form
$\lambda$
for
$\xi$
such that
$(H, \lambda |_H)$
is compatible with a Weinstein structure on
$H$,
i.e.,
$\omega_H = d \lambda |_H$
is a symplectic form,
$Z_H = \omega^\#_H \lambda |_H$
points outwardly along
$\del H$
and
is gradient-like for some Morse--Bott function
$\phi_H \colon H \to \bR$.
Its
\emph{contact surrounding}
$U_\epsilon(H)$
is the neighborhood of
$H$
in
$Y$
defined as follows.
Let
$\tilde{H}$
be a slightly extended Weinstein hypersurface
satisfying
$\lambda |_{\tilde{H} \setminus H} = t \lambda |_{\del H}$
for
$t \in [1, 1+\epsilon]$.
There is a neighborhood
$\tilde{U}$
of
$\tilde{H}$
diffeomorphic to
$\tilde{H} \times (-\epsilon, \epsilon)$
with
$\lambda |_{\tilde{U}}
=
\pr^*_1 (\lambda |_{\tilde{H}}) + du$
where
$u$
is the coordinate of the second factor.
For a nonnegative function
$h \colon \tilde{H} \to \bR$
which equals
$0$
on
$H$
and
$t-1$
near
$\del \tilde{H}$,
set
$U_\epsilon(H)
=
\{ h^2 + u^2 \leq \epsilon^2 \}
\subset
\tilde{U}$.
\end{definition}

\begin{remark}
Although the induced Weinstein structure on
$H$
depends on the choice of
$\lambda$,
the skeleton
$\Core(H)$
is independent of the choice
\cite[Lemma 12.1]{CE}.
\end{remark}

\begin{example}[{\cite[Example 2.1(i)]{Eli}}] \label{eg:thickening}
Let
$\cL$
be a Legendrian submanifold of a contact manifold
$(Y, \xi)$.
It admits a neighborhood
$U(\cL)$
isomorphic to
\begin{align*}
(J^1(\cL), du - pdq), \ q \in \cL, \ \| p \|^2 + u^2 \leq \epsilon.
\end{align*}
Then
$H(\cL) = U(\cL) \cap \{ u = 0 \}$
is a Weinstein hypersuface called a
\emph{Weinstein thickening}
of
$\cL$.
It is symplectomorphic to the cotangent ball bundle of
$\cL$.
Up to Weinstein isotopy,
$H(\cL)$
is independent of all the choices.
\end{example}

\begin{definition}[{\cite[Section 2]{Eli}}] 
A
\emph{Weinstein pair}
$(W, H)$
consists of a Weinstein domain
$(W, \lambda, \phi)$
together with a Weinstein hypersurface
$(H, \lambda |_H)$
in
$\del W$.
The
\emph{skeleton}
$\Core(W, H)$
of
$(W, H)$
is the relative skeleton of the Liouville manifold
$(W, \lambda)$
associated with
$H$.
\end{definition}


\begin{definition}[{\cite[Section 2]{Eli}}]
Let
$(W, H)$
be a Weinstein pair,
$\phi_H \colon H \to \bR$
a Morse--Bott function
for which
$Z_H = (\omega |_H)^\# \lambda |_H$
is gradient-like
and
$U_\epsilon(H) \subset \del W$
the contact surrounding of
$H$.
We call a pair
$(\lambda_0, \phi_0)$
of a Liouville form
$\lambda_0$
for
$\omega$
and
a smooth function
$\phi_0 \colon W \to \bR$
\emph{adjusted}
to
$(W, H)$
if
\begin{itemize}
\item
$Z_0 = \omega^\# \lambda_0$
is tangent to
$\del W$
on
$U_\epsilon(H)$
and
transverse to
$\del W$
elsewhere;
\item
$Z_0 |_{U_\epsilon(H)} = Z_H + u \del_u$;
\item
the attractor
$\bigcap_{t > 0} Z^{-t}_0(W)$
coincides with
$\Core(W, H)$;
\item
the function
$\phi_0$
is Morse--Bott for which
$Z_0 = (\omega)^\# \lambda_0$
is gradient-like
satisfying
$\phi_0 |_{U_\epsilon(H)} = \phi_H + \frac{1}{2} u^2$
and
whose critical values are not more than
$\phi_0 |_{\del U_\epsilon(H)}$.
\end{itemize}
\end{definition}

Given a Weinstein pair
$(W, H)$,
one can always modify
the Liouville form
$\lambda$
for
$\omega$
and
the Morse--Bott function
$\phi_H$
to be adjusted
while preserving
$\lambda$
in the complement of
$U_\epsilon(H)$.

\begin{lemma}[{\cite[Proposition 2.9]{Eli}}] \label{lem:modify}
Let
$(W, H)$
be a Weinstein pair.
Then there exist
a Liouville form
$\lambda_0$
for
$\omega$
and
a smooth function
$\phi_0 \colon W \to \bR$
such that
$(\lambda_0, \phi_0)$
are adjusted to
$(W, H)$
and
$\lambda_0 |_{W \setminus U_\epsilon(H)}
=
\lambda |_{W \setminus U_\epsilon(H)}$.
\end{lemma}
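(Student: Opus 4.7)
The strategy is to construct $(\lambda_0, \phi_0)$ by a localized modification of $(\lambda, \phi)$ supported in an inward collar of $U_\epsilon(H)$ inside $W$. Outside that collar nothing changes, which automatically yields the equality $\lambda_0|_{W \setminus U_\epsilon(H)} = \lambda|_{W \setminus U_\epsilon(H)}$; the actual work is to realize the prescribed normal form $Z_0 = Z_H + u\del_u$ along $U_\epsilon(H)$ and to interpolate without creating spurious zeros of $Z_0$ or altering the attractor.

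First I would produce a Darboux-type neighborhood of $H$ inside $W$. Applying a relative Weinstein neighborhood theorem to the contact surrounding $\tilde U \subset \del W$ together with an inward normal to $\del W$, one obtains a neighborhood $N$ of $H$ in $W$ and a symplectomorphism $N \cong \tilde H \times D^2_\delta$, where $D^2_\delta \subset \bR^2_{u,t}$ is a small disk and $\del W \cap N$ corresponds to $\{t = 0\}$, under which $\omega|_N$ becomes $\omega_H + du \wedge dt$ and the extended contact surrounding sits at $\tilde H \times \{t = 0\}$. On this model I would introduce the explicit adjusted primitive
\begin{align*}
\lambda_0^{\mathrm{loc}} := \lambda_H + u\,dt,
\end{align*}
which satisfies $d\lambda_0^{\mathrm{loc}} = \omega$ and whose Liouville vector field computes to $Z_0^{\mathrm{loc}} = Z_H + u\,\del_u$. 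This field is manifestly tangent to $\{t = 0\}$ and takes the prescribed form on $U_\epsilon(H)$.

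Next I would glue $\lambda_0^{\mathrm{loc}}$ to $\lambda$ via an exact perturbation. Since $N$ is contractible and both $\lambda$ and $\lambda_0^{\mathrm{loc}}$ are primitives of $\omega|_N$, the closed form $\lambda - \lambda_0^{\mathrm{loc}}$ admits a primitive $f \colon N \to \bR$. I would pick a cutoff $\chi \colon W \to [0,1]$ equal to $1$ on a neighborhood of $U_\epsilon(H)$ and supported inside $N$, and set $\lambda_0 := \lambda - d(\chi f)$, extended by $\lambda$ outside $N$. Then $d\lambda_0 = \omega$ globally, $\lambda_0 = \lambda$ outside $\supp \chi$, and on the region where $\chi \equiv 1$ one has $\lambda_0 = \lambda_0^{\mathrm{loc}}$, giving the required $Z_0$. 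For the function, I would take $\phi_0 := \phi_H + \tfrac{1}{2}(u^2 + t^2)$ on the model region, extend by a positive rescaling $c\,\phi$ with $c > 0$ chosen small enough that all critical values of $c\,\phi$ lie strictly below $\phi_0|_{\del U_\epsilon(H)}$, and smoothly interpolate the two across the transition annulus by a convex combination monotonic along Liouville trajectories.

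The hard part will be verifying that $Z_0$ remains gradient-like for $\phi_0$ across the transition region and that the attractor condition $\bigcap_{t>0} Z_0^{-t}(W) = \Core(W, H)$ still holds. In the annulus where $0 < \chi < 1$ the Liouville field acquires extra terms proportional to $d\chi \otimes f$, which could in principle create interior zeros or deflect trajectories away from $\Core(W) \cup \bR H$. I would handle this by choosing $\chi$ to depend only on a Liouville-adapted coordinate, such as a smooth function of $t$ and $\phi$, and by shrinking $\delta$ so that $f$ is $C^1$-small on $\supp d\chi$; this makes $Z_0$ a $C^0$-small perturbation of $Z$ there, so the gradient-like property persists and a straightforward flow-tracking argument identifies the new attractor with $\Core(W) \cup \bR H = \Core(W, H)$, completing the verification.
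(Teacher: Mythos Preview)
The paper does not supply its own proof of this lemma; it is stated as a direct citation of \cite[Proposition 2.9]{Eli} and used as a black box. There is therefore no argument in the paper to compare your sketch against.

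That said, your outline is essentially the standard strategy and matches what Eliashberg does: produce a symplectic normal form $\omega_H + du \wedge dt$ on a neighborhood $N$ of $H$ in $W$, write down the explicit primitive $\lambda_H + u\,dt$ whose Liouville field is $Z_H + u\,\del_u$, correct $\lambda$ by an exact form supported in $N$ to impose this, and build $\phi_0$ as $\phi_H + \tfrac{1}{2}(u^2+t^2)$ patched to a rescaled $\phi$. One point needs fixing: you justify exactness of $\lambda - \lambda_0^{\mathrm{loc}}$ by asserting that $N$ is contractible, but $N \cong \tilde H \times D^2_\delta$ retracts onto $\tilde H$, which is generally not contractible. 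The correct reason is that both primitives restrict to $\lambda_H$ on $H$, so their difference is a closed $1$-form that vanishes on the deformation retract $H \subset N$ and is therefore exact on $N$. With that amendment, the remaining delicate points you flag---avoiding spurious zeros of $Z_0$ in the transition annulus and verifying the attractor equals $\Core(W,H)$---are precisely the issues addressed in Eliashberg's proof, and your proposed fixes (choosing $\chi$ adapted to the flow, shrinking $\delta$ to make the perturbation $C^1$-small) are in the right spirit.
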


\subsection{Gluing of Weinstein pairs}
\begin{definition}[{\cite[Section 3.1]{Eli}}] 
Let
$(W, \lambda, \phi)$
be a Weinstein domain.
A
\emph{splitting}
for
$(W, \lambda, \phi)$
is a hypersurface
$(P, \del P) \subset (W, \del W)$
satisfying the following conditions.
\begin{itemize}
\item
$\del P$
and
$P$
respectively split
$\del W$
and
$W$
into two parts
$\del W = Y_- \cup Y_+$
with
$\del Y_- = \del Y_+ = Y_- \cap Y_+ = \del P$
and
$W = W_- \cup W_+$
with
$\del W_- = P \cup Y_-,
\del W_+ = P \cup Y_+,
W_- \cap W_+ = P$.
\item
The Liouville vector field
$Z = \omega^\# \lambda$
is tangent to
$P$.
\item
There exists a hypersurface
$(H, \del H) \subset (P, \del P)$
which is Weinstein for
$\lambda |_H$,
tangent to
$Z$
and
intersects transversely all leaves of the characteristic foliation of
$P$. 
\end{itemize}
\end{definition}

The above hypersurface
$(H, \del H)$
is called the
\emph{Weinstein soul}
for the splitting hypersurface
$P$.
Due to
\pref{lem:locCont}
below,
$P$
is contactomorphic to the contact surrounding
$U_\epsilon(H)$
of its Weinstein soul.

\begin{definition}[{\cite[Section 2]{Eli}}] 
Let
$H$
be a closed hypersurface in a $(2n-1)$-dimensional manifold
and
$\xi$
a germ of a contact structure along
$H$
which admits a transverse contact vector field
$V$.
The
\emph{invariant extension}
of the germ
$\xi$
is the canonical extension
$\hat{\xi}$
on
$H \times \bR$,
which is invariant with respect to translations along the second factor
and
whose germ along any slice
$H \times \{ t \}, t \in \bR$
is isomorphic to
$\xi$.
\end{definition}

\begin{lemma}[{\cite[Lemma 2.6]{Eli}}] \label{lem:locCont}
Let
$H$
be a closed $(2n-2)$-dimensional manifold
and
$\xi$
a contact structure on
$P = H \times [0, \infty)$
which admits a contact vector field
$V$
inward transverse to
$H \times \{ 0 \}$ 
such that
its trajectories intersecting
$H \times \{ 0 \}$
fill the whole manifold
$P$.
Then
$(P, \xi)$
is contactomorphic to
$(H \times [0, \infty), \hat{\xi})$,
where
$\hat{\xi}$
is the invariant extension of the germ of
$\xi$
along
$H \times \{ 0 \}$.
Moreover,
for any compact set
$C \subset P$
with
$H \times \{ 0 \} \subset C$
the contactomorphism can be taken
so that
it is equals to the identity on
$H \times \{ 0 \}$
and
sends
$V |_C$
to the vector field
$\del_t$.
\end{lemma}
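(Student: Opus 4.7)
The plan is to build the required contactomorphism directly from the flow of $V$. Let $\psi_s \colon P \to P$ denote the flow of $V$ for time $s$. Since $V$ is inward transverse to $H \times \{0\}$ and, by hypothesis, its trajectories emanating from $H \times \{0\}$ fill all of $P$, I would first show that the map
\[
\Phi \colon H \times [0, \infty) \to P, \qquad \Phi(x, s) = \psi_s(x),
\]
is a well-defined diffeomorphism. Local diffeomorphism near $H \times \{0\}$ follows from transversality; global injectivity follows from uniqueness of integral curves, and surjectivity is precisely the filling hypothesis.

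Next I would verify that $\Phi^* \xi = \hat{\xi}$. The crucial input is that, being a contact vector field, $V$ generates a flow which preserves $\xi$ as a hyperplane distribution (it acts conformally on any contact form defining $\xi$). Writing $T_{s_0}$ for translation by $s_0$ in the second factor, the semigroup law gives $\Phi \circ T_{s_0} = \psi_{s_0} \circ \Phi$, whence
\[
T_{s_0}^* \Phi^* \xi = \Phi^* \psi_{s_0}^* \xi = \Phi^* \xi.
\]
So $\Phi^* \xi$ is translation-invariant in the $t$-direction. Because $\Phi|_{H \times \{0\}} = \mathrm{id}_H$, the germ of $\Phi^* \xi$ along $H \times \{0\}$ agrees with the germ of $\xi$ there. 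Invariance plus the prescribed germ characterize $\hat{\xi}$ uniquely, so $\Phi^* \xi = \hat{\xi}$.

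The desired contactomorphism is then $F = \Phi^{-1} \colon (P, \xi) \to (H \times [0, \infty), \hat{\xi})$. By construction $F$ restricts to the identity on $H \times \{0\}$, and the relation $\Phi_* \partial_t = V$ yields $F_* V = \partial_t$ everywhere on $P$, a fortiori on any prescribed compact set $C$ with $H \times \{0\} \subset C$.

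The main technical point, rather than a true obstacle, is to ensure that $\Phi$ really is a global diffeomorphism: transversality of $V$ to $H \times \{0\}$ is what gives forward completeness and local injectivity near the boundary, while the filling assumption on trajectories is exactly what rules out the pathologies (closed orbits, or orbits limiting into the interior without issuing from $H \times \{0\}$) that would otherwise prevent surjectivity. Once that is established, the rest reduces to a formal argument from the semigroup law and the conformal preservation of $\xi$ by $\psi_s$, and no Moser-type interpolation is needed.
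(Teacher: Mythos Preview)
The paper does not supply its own proof of this lemma; it is simply quoted from \cite[Lemma 2.6]{Eli} and used as a black box, so there is nothing in the paper to compare your argument against. Your flow-based construction is exactly the standard proof and is correct in outline.

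One small point is worth tightening. You assert that transversality of $V$ to $H\times\{0\}$ gives forward completeness of the flow, but transversality by itself only guarantees that trajectories enter the interior and remain there for some positive time; since $P=H\times[0,\infty)$ is noncompact, a trajectory could in principle escape to infinity in finite time. What the filling hypothesis actually gives is that the flow map $\Phi$ is a diffeomorphism from its maximal domain of definition $U=\{(x,s):0\le s<T(x)\}$ onto $P$. On $U$ your invariance argument shows $\Phi^*\xi$ is $\partial_t$-invariant, hence extends to $\hat\xi$ on all of $H\times[0,\infty)$, and a further fiberwise reparametrization of the second coordinate (taken to be the identity on the compact set $\Phi^{-1}(C)$) identifies $(U,\Phi^*\xi)$ with $(H\times[0,\infty),\hat\xi)$. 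This is also why the lemma only promises $F_*V=\partial_t$ on a prescribed compact $C$ rather than globally: if the flow of $V$ were incomplete one could not have $F_*V=\partial_t$ everywhere, since $\partial_t$ is complete.
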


\begin{definition}[{\cite[Section 3.1]{Eli}}] \label{dfn:gluing}
Let
$(W, H), (W^\prime, H^\prime)$
be Weinstein pairs
with adjusted
Liouville forms
and
Morse--Bott functions
$(\lambda, \phi), (\lambda^\prime, \phi^\prime)$.
Suppose that
there is an isomorphism
$(H, \lambda |_H, \phi |_H)
\cong
(H^\prime, \lambda^\prime |_{H^\prime}, \phi^\prime |_{H^\prime})$
of Weinstein manifolds.
Extend it to their contact surroundings
$U_\epsilon(H) \cong U_\epsilon(H^\prime)$.
Then the
\emph{gluing}
of 
$(W, H), (W^\prime, H^\prime)$
is given by
$W \#_{H \cong H^\prime} W^\prime
=
W \cup_{U_\epsilon(H) \cong U_\epsilon(H^\prime)} W^\prime$
with glued
Liouville form
and
Morse--Bott function.
\end{definition}

By definition we have
\begin{align*}
\Core(W \#_{H \cong H^\prime} W^\prime)
=
\Core(W, H) \cup_{\Core(H) \cong \Core(H^\prime)} \Core(W^\prime, H^\prime).
\end{align*}

\subsection{Weinstein handle attachments}
Let
$W$
be a Weinstein domain with a smooth Legendrian
$\cL \subset \del_\infty W$.
Fix a standard neighborhood of
$\cL$
in
$\del_\infty W$
\begin{align*}
\eta
\colon
\Nbd_{\del_\infty W}(\cL)
\hookrightarrow
J^1 \cL
=
T^* \cL \times \bR
\end{align*}
which extends to a neighborhood in the Weinstein domain
$W$
\begin{align*}
\xi
\colon
\Nbd_W(\cL)
\hookrightarrow
J^1 \cL \times \bR_{\leq 0}
\cong
T^*(\cL \times \bR_{\leq 0}),
\end{align*}
where the Liouville flow on
$W$
gets identified with the translation action on
$\bR_{\leq 0}$.
Note that
$\eta^{-1}(T^* \cL \times \{ 0 \})$
gives a Weinstein thichkening of
$\cL$.

Due to
\pref{lem:modify},
one can modify the Weinstein structure near
$\cL$
so that
the Liouville flow gets identified with the cotangent scaling on
$T^*(\cL \times \bR_{\leq 0})$.
In other words,
the Liouville form
and
the Morse--Bott function
become adjusted to the Weinstein pair
$(W, \eta^{-1}(T^* \cL \times \{ 0 \}))$.
We denote by
$\widetilde{W}$
its completion.
Then
$\eta$
yields a neighborhood of
$\cL$
in
$\del_\infty \widetilde{W}$ 
\begin{align*}
\widetilde{\eta}
\colon
\Nbd_{\del_\infty \widetilde{W}}(\cL)
\hookrightarrow
J^1 \cL
\end{align*}
which extends to a neighborhood in
$\widetilde{W}$
\begin{align*}
\widetilde{\xi}
\colon
\Nbd_{\widetilde{W}}(\cL)
\hookrightarrow
J^1 \cL \times \bR
\cong
T^*(\cL \times \bR).
\end{align*}

\begin{remark}
As a Liouville domain,
the modification is canonical up to contractible choice,
since any two Liouville structures on a compact symplectic manifold are canonically homotopic
\cite[page 2]{Eli}.
\end{remark}

\begin{remark} \label{rmk:sector}
The result
$\widetilde{W}$
is a Liouville sector in the sense of
\cite[Definition 2.4]{GPS1}.
Indeed,
$\widetilde{W}$
is a Liouville manifold-with-boundary,
whose boundary consists of
$\Nbd_{\del_\infty \widetilde{W}}(\cL)$
and
the completion of
$\del \Nbd_{\del_\infty W}(\cL)$.
The characteristic foliations of
the
former 
and
latter
are respectively given by
the Reeb vector field on
$\del W$
and
the restricted Liouville vector field on
$\widetilde{W}$
up to negation.
Now,
it is clear that
$\del \del_\infty \widetilde{W}$
is convex
and
there is a diffeomorphism
$\del \widetilde{W} \cong \bR \times F$
sending the characteristic foliation of
$\del \widetilde{W}$
to the foliation of
$\bR \times F$
by leaves
$\bR \times \{ p \}$.
\end{remark}

Given Weinstein domains
$W, W^\prime$
with smooth Legendrian embeddings
$\del_\infty W \hookleftarrow \cL \hookrightarrow \del_\infty W^\prime$,
we write
$W \#_\cL W^\prime$
for the gluing
\begin{align*}
W \#_{\eta^{-1}(T^* \cL \times \{ 0 \}) \cong \eta^{\prime -1}(T^* \cL \times \{ 0 \})} W^\prime
=
\widetilde{W} \cup_{J^1 \cL} \widetilde{W^\prime}
\end{align*}
which yields a Weinstein manifold-with-boundary with skeleton
\begin{align*}
\Core(W \#_\cL W^\prime)
=
\Core(W, \cL)
\cup_\cL
\Core(W^\prime, \cL).
\end{align*}

\begin{definition}[{\cite[Definition 4.9]{GS1}}] \label{dfn:biconicity}
Let
$W$
be a Weinstein domain with a smooth Legendrian
$\cL \subset \del_\infty W$.
A conic subset
$\bL \subset W$
is
\emph{biconic}
along
$\cL$
if the image of
$\bL \cap \Nbd_W(\cL)$
under some
$\xi
\colon
\Nbd_W(\cL)
\hookrightarrow
T^*(\cL \times \bR_{\leq 0})$,
where the Liouville flow on
$W$
gets identified with the translation action on
$\bR_{\leq 0}$,
is invariant also under the cotangent scaling of
$T^*(\cL \times \bR_{\leq 0})$.
\end{definition}

By construction any biconic subset
$\bL \subset W$
remains conic in
$\widetilde{W}$.
We write
$\widetilde{\bL} \subset \widetilde{W}$
for its saturation under the Liouville flow.

\begin{lemma}[{\cite[Lemma 4.10]{GS1}}] \label{lem:biconicity}
Let
$W$
be a Weinstein domain with a smooth Legendrian
$\cL \subset \del_\infty W$.
Then a Lagrangian
$\bL \subset W$
is biconic along
$\cL$
if and only if
it is conic
and
\begin{align*}
\xi(\bL \cap \Nbd_W(\cL))
\subset
T^* \cL \times \{ 0 \} \times \bR_{\leq 0}
\subset
J^1 \cL \times \bR_{\leq 0}
\cong
T^*(\cL \times \bR_{\leq 0})
\end{align*}
for some
$\xi
\colon
\Nbd_W(\cL)
\hookrightarrow
T^*(\cL \times \bR_{\leq 0})$,
where the Liouville flow on
$W$
gets identified with the translation action on
$\bR_{\leq 0}$.
\end{lemma}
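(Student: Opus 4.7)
The plan is to reduce both implications to a local computation in $T^*(\cL \times \bR_{\leq 0}) \cong J^1\cL \times \bR_{\leq 0}$, using coordinates $(q, p, u, s)$ in which $(q, p)$ are the standard base/fiber coordinates on $T^*\cL$, the jet coordinate $u$ of $J^1\cL$ is identified with the cotangent fiber over $\bR_{\leq 0}$, and $s \in \bR_{\leq 0}$. In these coordinates the symplectic form reads $\omega = dp \wedge dq + du \wedge ds$, the Liouville vector field pulled back from $W$ via $\xi$ is $\partial_s$, the cotangent scaling vector field is $p \partial_p + u \partial_u$, and the distinguished subset $T^*\cL \times \{0\} \times \bR_{\leq 0}$ coincides with the slice $\{u = 0\}$. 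Writing $\bL' = \xi(\bL \cap \Nbd_W(\cL))$, both implications reduce to keeping track of which of these two vector fields is tangent to $\bL'$, combined with the Lagrangian constraint $\omega|_{T\bL'} = 0$.

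For the forward direction, I would use that biconicity makes both $\partial_s$ and $p \partial_p + u \partial_u$ lie in $T\bL'$. Contracting $\omega$ with $\partial_s \in T\bL'$ yields the one-form $-du$, which by the Lagrangian condition must annihilate $T\bL'$; hence $u$ is locally constant on $\bL'$. On the other hand, the orbit of any $(q, p, u, s) \in \bL'$ under cotangent scaling is $\{(q, tp, tu, s) : t > 0\}$, and unless $u = 0$ this orbit sweeps through a one-parameter family of $u$-values, contradicting local constancy. Therefore $u \equiv 0$ on $\bL'$, so $\bL' \subset \{u = 0\}$, and conicity is already part of the definition of biconic.

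For the reverse direction, observe that $\{u = 0\} = T^*\cL \times \bR_{\leq 0}$ is coisotropic inside $T^*(\cL \times \bR_{\leq 0})$ with characteristic foliation generated by $\partial_s$. Conicity and containment in $\{u = 0\}$ then force the product decomposition $\bL' = \bL_0 \times \bR_{\leq 0}$ with $\bL_0 \subset T^*\cL$ Lagrangian in the symplectic quotient. The cotangent scaling preserves $\{u = 0\}$ and restricts there to the standard fiber scaling $(q, p) \mapsto (q, tp)$ of $T^*\cL$, so biconicity reduces to $\bL_0$ being conic in $T^*\cL$. To arrange this I would use the freedom in the choice of $\xi$ made explicit by the existential quantifier in the Lemma: combining the Legendrian tubular neighborhood theorem for $\cL$ with the contractibility of adjusted Liouville structures underlying \pref{lem:modify}, I would isotope $\xi$ within $\Nbd_W(\cL)$ so as to align $\bL_0$ with the fibers of $T^*\cL \to \cL$, presenting it as the conormal of its front projection.

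The main obstacle is precisely this final adaptation of $\xi$: it requires a parametric neighborhood theorem that preserves the identification of the Liouville vector field with $\partial_s$ while straightening $\bL_0$ into conormal form. I expect it to follow from a relative Weinstein neighborhood argument along $\del_\infty \bL \cap \Nbd_{\del_\infty W}(\cL)$ combined with the isotopy extension property for conic Lagrangians, exploiting compactness of a fundamental domain for the Liouville flow on $\bL'$.
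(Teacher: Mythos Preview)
Your forward direction is correct and is a rigorous expansion of the paper's one-sentence proof, which merely observes that the jet coordinate $u$ is the extra cotangent-fiber direction of $T^*(\cL\times\bR_{\le 0})$ relative to $T^*\cL$. Your contraction $\iota_{\partial_s}\omega=-du$ together with the Lagrangian condition and scaling invariance is exactly what makes that observation into a proof that $u\equiv 0$.

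Your difficulty in the reverse direction is genuine, and the paper's proof does not resolve it either. As you correctly reduce it, once $\bL'\subset\{u=0\}$ and is $\partial_s$-invariant, biconicity is equivalent to the slice $\bL_0\subset T^*\cL$ being conic under fiber scaling; but containment in $\{u=0\}$ alone gives no control over $p$-scaling (e.g.\ the graph of a nonzero closed one-form on $\cL$, crossed with $\bR_{\le 0}$, satisfies all hypotheses yet is not biconic for the given $\xi$). The paper's sentence about the $\bR$-factor being ``responsible for the additional cotangent scaling'' does not address this at all. Your proposed remedy --- isotoping $\xi$ via a relative Weinstein neighborhood argument to put $\bL_0$ into conormal form --- would require a nontrivial parametric statement and is much stronger than anything the paper invokes; in general it is not clear such a straightening exists while preserving the identification of the Liouville flow with $\partial_s$.

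The resolution in practice is that the lemma is only ever applied (in \pref{lem:biconicity-criterion} and \pref{cor:biconicity-criterion}) to Lagrangians that are unions of conormals $T^*_{S_\alpha}Y'$, for which the slice $\bL_0$ is already conic in $T^*\cL$ by construction. So the reverse implication is immediate in every case the paper needs, without any modification of $\xi$. You would do better to note this extra structural hypothesis explicitly than to attempt the general straightening.
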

\begin{proof}
The second factor
$\bR$
in the product
$T^* \cL \times \bR = J^1 \cL$
is responsible for the additional cotangent scaling of
$T^*(\cL \times \bR_{\leq 0})$
with respect to
$T^* \cL$.
\end{proof}

Given biconic Lagrangians
$\bL \subset W, \bL^\prime \subset W^\prime$
with matching ends in the sense that
\begin{align*}
\widetilde{\eta}(\bL \cap \Nbd_{\del_\infty W}(\cL))
=
\widetilde{\eta}^\prime(\bL^\prime \cap \Nbd_{\del_\infty W^\prime}(\cL))
\end{align*}
in
$W \#_\cL W^\prime$,
we write
$\bL \#_\cL \bL^\prime$
for the gluing
$\widetilde{\bL}
\cup_{\widetilde{\eta}(\bL \cap \Nbd_{\del_\infty W}(\cL))}
\widetilde{\bL^\prime}$
in
$W \#_\cL W^\prime$.
Since any biconic subsets in
$W, W^\prime$
remain conic in
$W \#_\cL W^\prime$,
the gluing
$\bL \#_\cL \bL^\prime$
is a conic Lagrangian.

For a closed manifold
$\widehat{M}$
and
a manifold-with-boundary
$S$,
consider the Weinstein domain-with-boundary
$W^\prime = [T^* \widehat{M} \times T^* S]$
with a smooth Legendrian
$\cL = \widehat{M} \times \del S$
taken to be a subset of the zero section.
The Liouville flow on
$W^\prime$
near
$\cL$
is the cotangent scaling.
We write
$\widetilde{W^\prime}$
for the completion
$T^* \widehat{M} \times T^* S$,
which is a Weinstein manifold-with-boundary.
One can check that
the above gluing procedure carries over,
although
$\cL$
does not belong to
$\del_\infty W^\prime$.

\begin{remark}
As explained in
\cite[Example 2.3]{GPS1},
when
$S$
is noncompact
$T^* S$
equipped with the standard Liouville form is not even a Liouville manifold.
However,
if
$S$
is the interior of a compact manifold-with-boundary,
then
$T^* S$
becomes a Liouville manifold after a suitable modification of the Liouville form near the boundary making it convex.
We
use the same symbol
$T^* S$
to denote the result
and
regard it as a stopped Weinstein sector.
\end{remark}

\begin{definition}[{\cite[Definition 4.11]{GS1}}] \label{dfn:extension}
Let
$W$
be a Weinstein domain with a smooth Legendrian embedding
$\widehat{M} \times \del S \hookrightarrow \del_\infty W$.

(1)
A
\emph{handle attachment}
is the gluing
\begin{align*}
W \#_{\widehat{M} \times \del S} [T^*\widehat{M} \times T^* S]
\end{align*}
respecting the product structure.

(2)
Let further
$\bL \subset W$
be a biconic subset along
$\widehat{M} \times \del S$
which locally factors as
\begin{align*}
\eta(\bL \cap \Nbd_{\del_\infty W}(\widehat{M} \times \del S))
=
\bL_S \times \del S
\subset
T^*\widehat{M} \times T^* \del S
\end{align*}
for some fixed conic Lagrangian
$\bL_S$.
The
\emph{extension}
of
$\bL$
through the handle is the gluing
\begin{align*}
\bL \#_{\widehat{M} \times \del S} (\bL_S \times S)
\end{align*}
respecting the product structure.

(3)
Let further
$\Lambda \subset \del_\infty W$
be a subset satisfying
\begin{align*}
\eta(\Lambda \cap \Nbd_{\del_\infty W}(\widehat{M} \times \del S))
\cong
\bL_S \times \del S
\subset
T^*\widehat{M} \times T^* \del S \times \{ 0 \}
\subset
J^1 (\widehat{M} \times \del S)
\end{align*}
for
$\bL_S$.
The
\emph{extension}
of
$\Lambda$
through the handle is the ideal boundary
\begin{align*}
\del_\infty (\Core(W, \Lambda) \#_{\widehat{M} \times S} (\bL_S \times S))
\end{align*}
of the extension of the biconic Lagrangian
$\Core(W, \Lambda)$
thorough the handle.
\end{definition}

\begin{lemma}[{\cite[Lemma 4.12]{GS1}}] \label{lem:biconicity-criterion}
Let
$E \to Y$
be a vector bundle.
Then near the Legendrian
$\cL = \del_\infty T^*_Y E \subset T^* E$
there are local coordinates
such that
if
$\Lambda \subset \bigcup_{\alpha} T^*_{S_\alpha} E$
for any collection
$\{ S_\alpha \}_\alpha$
of conic subsets
$S_\alpha \subset E$
with respect to the scaling of
$E$,
then
$\Lambda$
is biconic along
$\cL$.
\end{lemma}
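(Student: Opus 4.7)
The plan is to construct adapted local coordinates near $\cL = \partial_\infty T^*_Y E$ via the Weinstein neighborhood theorem so that the biconicity criterion of \pref{lem:biconicity} can be checked directly. The cotangent variable $\eta_r$ along the Liouville direction of $T^*(\cL \times \bR_{\leq 0})$ will turn out, in the constructed chart, to coincide with the pairing $p_v(v)$ on $T^*E$, and this vanishes on any conormal bundle $T^*_{S_\alpha} E$ of a fiberwise cone by the standard Euler field argument.

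After locally trivializing $E \cong Y \times V$, I would use canonical coordinates $(y, v, p_y, p_v)$ on $T^*E$ with Liouville form $p_y\, dy + p_v\, dv$, so that $\cL$ lies at $\{v = 0,\ p_y = 0,\ |p_v| \to \infty\}$. The symplectomorphism $(y, v, p_y, p_v) \mapsto (y, p_v, p_y, -v)$ realizes a neighborhood of $T^*_Y E$ in $T^*E$ as a neighborhood of the zero section in $T^*E^*$. Introducing log-polar coordinates $p_v = e^{-r}\theta$ with $\theta \in S(V^*)$ brings $\cL$ to $\cL \times \{0\} \subset \cL \times \bR_{\leq 0}$. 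The Liouville vector field $p_y \partial_{p_y} + p_v \partial_{p_v}$ becomes $p_y \partial_{p_y} - \partial_r$ in these coordinates, so after the Weinstein completion of the discussion preceding \pref{rmk:sector} the Liouville flow gets identified with translation on the $\bR_{\leq 0}$ factor. In the resulting chart $\xi : \Nbd_{T^*E}(\cL) \hookrightarrow T^*(\cL \times \bR_{\leq 0})$, one computes $\eta_r = p_v(v)$ up to sign.

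Now suppose $\Lambda \subset \bigcup_\alpha T^*_{S_\alpha} E$ with each $S_\alpha \subset E$ invariant under the fiberwise scaling $\rho_t : v \mapsto tv$. At any smooth point $(y, v) \in S_\alpha$ with $v \neq 0$, the Euler vector $(0, v)$ is tangent to $S_\alpha$, while on the conormal $(p_y, p_v)$ annihilates $T_{(y,v)} S_\alpha$; hence $p_v(v) = 0$ and therefore $\eta_r = 0$ on $T^*_{S_\alpha} E$. So $\xi(\Lambda \cap \Nbd(\cL)) \subset T^*\cL \times \{0\} \times \bR_{\leq 0}$, and since $\Lambda$ is contained in a Liouville-invariant union of conormals it is conic. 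By \pref{lem:biconicity} it is biconic along $\cL$.

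The main obstacle lies in the first step: one must verify that the Weinstein neighborhood theorem applied to the non-compact Lagrangian $T^*_Y E$, composed with log-polar coordinates on the $p_v$-fibers and with the Weinstein completion of the handle model, yields an embedding $\xi$ with Liouville flow matching translation on $\bR_{\leq 0}$ as required by Definition 4.9, and that the result is independent of the local trivialization of $E$. Once the chart is in place, the identification $\eta_r = p_v(v)$ and the Euler-field vanishing $p_v(v) = 0$ on conormals of fiberwise cones are essentially automatic.
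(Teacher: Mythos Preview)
Your argument is correct and rests on the same key observation as the paper: the pairing between $E$ and $E^\vee$ vanishes on $T^*_{S_\alpha}E$ whenever $S_\alpha$ is fiberwise conic (your Euler-field computation $p_v(v)=0$), and this pairing is precisely the Reeb-direction coordinate in $J^1\cL$, so \pref{lem:biconicity} applies. The paper packages the chart construction differently: rather than trivializing $E$, passing to $T^*E^*$, and introducing log-polar coordinates, it defines the \emph{polar hypersurface} $P=\{(y,u,v,w):v(u)=0\}\subset T^*E=E\oplus E^\vee\oplus T^*Y$ intrinsically, observes $T^*_{S_\alpha}E\subset P$, and then identifies $\partial_\infty P$ locally with $T^*\cL$ via an explicit point-by-point matching $(y,u,[v],w)\leftrightarrow(y,[v],w,u)$, which exhibits $\partial_\infty P$ as a Weinstein thickening of $\cL$ in the sense of Example~\ref{eg:thickening}; the $J^1\cL$ coordinates then come from the Reeb flow transverse to this thickening. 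This sidesteps exactly the obstacle you flag---verifying that your explicit chart intertwines the Liouville flow with translation and is independent of trivialization---because the Liouville compatibility is built into the Weinstein-thickening identification. Your route is more hands-on and would work once the Liouville matching is checked; the paper's is coordinate-free and slightly cleaner for that reason.
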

\begin{proof}
As a bundle over
$Y$,
we have
$T^* E = E \oplus E^\vee \oplus T^* Y$.
Let
$P \subset T^* E$
be the polar hypersurface defined as the kernel of the pairing between
$E$
and
$E^\vee$.
Then
$P$
contains
$T^*_{S_\alpha} E$
for any conic subset
$S_\alpha \subset E$.
Indeed,
a point
$(y,u,v,w) \in T^*_y E$
belongs to the fiber
$(T^*_{S_\alpha} E)_y \subset (T^* E |_{S_\alpha})_y$
only if
$(y,u) \in S_\alpha$
and
$v(u) = 0$.
Locally,
the ideal boundary
$\del_\infty P \subset \del_\infty T^* E$
can be identified with
$T^* \cL$
compatibly with their standard Liouville structures.
Indeed,
any point of
$(\del_\infty P)_y$
is expressed as
$(y,u,[v],w)$
with
$(y,u,v,w) \in P_y$,
while any point of
$T^*_y \cL$
is expressed as
$(y,[v],w,u)$
with
$(y,[v]) \in (\del_\infty T^*_Y E)_y$,
$w \in T^*Y_y$,
and
$u \in E_y$.
Here,
we identify
$E_y$
with the hyperplane orthogonal to
$v$
passing through the origin.
Hence locally
$\del_\infty P$
is a Weinstein thickening of
$\cL$
from Example
\pref{eg:thickening}.
Transporting to
$\Nbd_P(\cL)$
the Reeb vector flow
which intersects
$T^* \cL$
transversely,
we obtain local coordinates
$\eta \colon \Nbd_P(\cL) \hookrightarrow J^1 \cL$
whose biconicity follows from
\pref{lem:biconicity}.
\end{proof}

\begin{corollary}[{\cite[Corollary 4.13]{GS1}}] \label{cor:biconicity-criterion}
Let
$Y \subset Y^\prime$
be a submanifold.
Then near the Legendrian
$\del_\infty T^*_Y Y^\prime$
there are local coordinates
such that
if
$\Lambda \subset \bigcup_{\alpha} T^*_{S_\alpha} Y^\prime$
for any collection
$\{ S_\alpha \}_\alpha$
of subsets
$S_\alpha \subset Y$,
then
$\Lambda$
is biconic.
\end{corollary}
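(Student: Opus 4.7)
The plan is to reduce the statement directly to Lemma \pref{lem:biconicity-criterion} by replacing the submanifold $Y \subset Y^\prime$ with the zero section of its normal bundle. First I would fix a tubular neighborhood embedding $\iota \colon E \hookrightarrow Y^\prime$ of $E = N_Y Y^\prime$ that restricts to the identity on the zero section $Y \subset E$. Its cotangent lift yields a symplectomorphism between a neighborhood of $T^*_Y Y^\prime$ in $T^* Y^\prime$ and a neighborhood of $T^*_Y E$ in $T^*E$ with their standard Liouville structures, carrying the Legendrian $\del_\infty T^*_Y Y^\prime$ onto $\del_\infty T^*_Y E$. Under this identification, for any subset $S_\alpha \subset Y$ the conormal $T^*_{S_\alpha} Y^\prime$ corresponds to $T^*_{S_\alpha} E$ computed in $T^*E$, with $S_\alpha$ regarded as a subset of the zero section.

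The key observation making the reduction effective is that every subset of the zero section $Y \subset E$ is automatically conic with respect to the fiberwise scaling on $E$, since that scaling fixes $Y$ pointwise. Consequently the hypothesis $\Lambda \subset \bigcup_{\alpha} T^*_{S_\alpha} Y^\prime$ translates under the cotangent lift to a containment $\Lambda \subset \bigcup_{\alpha} T^*_{S_\alpha} E$ for a collection of conic subsets of $E$. Lemma \pref{lem:biconicity-criterion} then supplies local coordinates $\eta \colon \Nbd(\cL) \hookrightarrow J^1 \cL$ near $\cL = \del_\infty T^*_Y E$ in which $\Lambda$ is biconic, and pulling these back through the cotangent lift of $\iota$ produces the desired local coordinates near $\del_\infty T^*_Y Y^\prime$.

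The main point to check, which I regard as a verification rather than a genuine obstacle, is that the polar hypersurface $P$ and the Reeb transport used in the proof of Lemma \pref{lem:biconicity-criterion}, which are constructed intrinsically from the vector bundle structure of $E$, pull back to data still detecting biconicity in $T^*Y^\prime$. Because the cotangent lift of $\iota$ is an honest symplectomorphism preserving $\cL$ together with the ambient standard Liouville form, and because the biconicity condition of Lemma \pref{lem:biconicity} is the intrinsic containment
\begin{align*}
\xi(\bL \cap \Nbd_W(\cL))
\subset
T^* \cL \times \{ 0 \} \times \bR_{\leq 0},
\end{align*}
the verification reduces to observing that the Reeb foliations on the two sides agree along $\cL$, which is automatic since $\iota$ is the identity on $Y$ to first order.
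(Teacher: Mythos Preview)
Your proposal is correct and follows essentially the same route as the paper: identify a tubular neighborhood of $Y$ with the normal bundle $E = N_Y Y^\prime$, observe that any subset $S_\alpha \subset Y$ sitting in the zero section is trivially conic for the fiber scaling, and apply \pref{lem:biconicity-criterion}. The paper's proof is just the terse two-sentence version of what you wrote; your additional verification paragraph about the Reeb data and the cotangent lift is more careful than the paper bothers to be, but it is not a different argument.
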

\begin{proof}
Locally identify a tubular neighborhood
$N_Y Y^\prime$
of
$Y$
in
$Y^\prime$
with
$Y^\prime$.
Since any subset
$S_\alpha \subset Y$
trivially becomes conic with respect to the scaling of
$N_Y Y^\prime$,
one can apply
\pref{lem:biconicity-criterion}
for
$E = N_Y Y^\prime$
and
$\cL = \del_\infty T^*_Y N_Y Y^\prime$
to obtain the desired local coordinates.
\end{proof}

\begin{definition}[{\cite[Section 3.1]{FLTZ12}}]
Let
$\Sigma \subset M_\bR$
be a rational polyhedral fan
and
$\widehat{M}$
the real $n$-torus
$\Hom(M, \bR / \bZ) = M^\vee_\bR / M^\vee$.
The
\emph{FLTZ Lagrangian}
is the union
\begin{align*}
\bL(\Sigma)
=
\bigcup_{\sigma \in \Sigma} \bL_\sigma
=
\bigcup_{\sigma \in \Sigma} \sigma^\perp \times \sigma
\end{align*}
of conic Lagrangians
$\sigma^\perp \times \sigma
\subset
\widehat{M} \times M_\bR
\cong
T^*\widehat{M}$,
where
$\sigma^\perp$
is the real $(n - \dim \sigma)$-subtorus
\begin{align*}
\{ x \in \widehat{M} \ | \ \langle x, v \rangle = 0 \ \text{for all } v \in \sigma \}.
\end{align*}
\end{definition}

\begin{remark}[{\cite[Definition 6.3]{FLTZ14}}]
When
$\Sigma$
is a stacky fan,
the FLTZ Lagrangian
$\bL(\Sigma)$
becomes the union
$\bigcup_{\sigma \in \Sigma} G_\sigma \times \sigma$,
where
$G_\sigma = \Hom(M_\sigma, \bR / \bZ)$
are possibly disconnected subgroup of
$\widehat{M}$.
Here,
we denote by
$M_\sigma$
the quotients of
$M$
by the stacky primitives for
$\sigma$. 
\end{remark}

\begin{lemma}[{\cite[Lemma 4.16]{GS1}}] \label{lem:coordinates}
Let
$\Sigma \subset M_\bR$
be a rational polyhedral fan.
Then for each cone
$\sigma \in \Sigma$
near the Legendrian
$\del_\infty \bL_\sigma \subset \del_\infty T^* \widehat{M}$
there are local coordinates
\begin{align*}
\eta_\sigma
\colon
\Nbd(\del_\infty \bL_\sigma)
\hookrightarrow
J^1 \del_\infty \bL_\sigma = T^* \del_\infty \bL_\sigma \times \bR 
\end{align*}
with the following properties.
\begin{itemize}
\item[(1)]
The Lagranzian
$\bL_\sigma$
is biconic along
$\del_\infty \bL_\sigma$.
\item[(2)]
For any nonzero cones
$\sigma, \tau \in \Sigma$
with
$\sigma \subset \bar{\tau}$,
we have
\begin{align} \label{eq:recursive}
\eta_\sigma (\del_\infty \bL_\tau \cap \Nbd(\del_\infty \bL_\sigma))
=
\bL_{\tau / \sigma} \times \del_\infty \sigma \times \{ 0 \}
\subset
T^* \sigma^\perp \times T^* \del_\infty \sigma \times \{ 0 \} 
=
T^* \del_\infty \bL_\sigma \times \{ 0 \}.
\end{align}
\item[(3)]
The FLTZ Lagranzian
$\bL(\Sigma)$
is biconic along each
$\del_\infty \bL_\sigma$.
\item[(4)]
For each
$\sigma \in \Sigma$
the local coordinates
$\eta_\sigma$
define a Weinstein hypersuface
\begin{align*}
R_\sigma = \eta^{-1}_\sigma (T^* \del_\infty \bL_\sigma \times \{ 0 \})
\subset
\del_\infty T^* \widehat{M}
\end{align*}
containing
$\del_\infty \bL_\sigma$
as its skeleton
and
we have
$R_\tau \cap \Nbd(\del_\infty \bL_\sigma) \subset R_\sigma$
for any
$\tau \in \Sigma$
with
$\sigma \subset \bar{\tau}$.
\end{itemize}
\end{lemma}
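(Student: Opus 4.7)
The plan is to construct $\eta_\sigma$ by specializing the biconicity criterion \pref{cor:biconicity-criterion} to the torus embedding $\sigma^\perp \hookrightarrow \widehat{M}$, and to leverage the product structure $\bL_\sigma = \sigma^\perp \times \sigma$ to verify the four properties.

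First I would observe that $\bL_\sigma$ is of conormal type. Under the identification $T^*\widehat{M} = \widehat{M} \times M_\bR$, the conormal bundle $T^*_{\sigma^\perp}\widehat{M}$ equals $\sigma^\perp \times \langle \sigma \rangle$, where $\langle \sigma \rangle \subset M_\bR$ is the linear span of $\sigma$. Hence $\bL_\sigma = \sigma^\perp \times \sigma$ lies inside this conormal as the subset whose cotangent coordinate is restricted to $\sigma \subset \langle \sigma \rangle$, and $\del_\infty \bL_\sigma = \sigma^\perp \times \del_\infty \sigma$ is a Legendrian inside $\del_\infty T^*_{\sigma^\perp}\widehat{M}$. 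A lattice-compatible tubular trivialization identifying a neighborhood of $\sigma^\perp$ in $\widehat{M}$ with $\sigma^\perp \times N_{\sigma^\perp}\widehat{M}$ then places us in the setting of \pref{lem:biconicity-criterion} with $E = N_{\sigma^\perp}\widehat{M} \to \sigma^\perp$.

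Next, I would define $\eta_\sigma$ as the restriction to $\Nbd(\del_\infty \bL_\sigma)$ of the local coordinates near $\del_\infty T^*_{\sigma^\perp}\widehat{M}$ produced by \pref{cor:biconicity-criterion}. Property (1) is then immediate, since $\bL_\sigma$ is the conormal to $\sigma^\perp$ viewed as the trivially conic zero section of $N_{\sigma^\perp}\widehat{M}$. For (2), the splitting $T^*\widehat{M} \cong T^*\sigma^\perp \times T^*N_{\sigma^\perp}\widehat{M}$ induced by the tubular trivialization, together with the fact that $\sigma \subset \bar{\tau}$ yields $\tau^\perp \subset \sigma^\perp$ and a well-defined projection $\tau \mapsto \tau/\sigma \subset M_\bR/\langle \sigma \rangle$, makes $\bL_\tau$ factor in a neighborhood of $\del_\infty \bL_\sigma$ as $\bL_{\tau/\sigma} \subset T^*\sigma^\perp$ times the conormal contribution along $\sigma$ in $T^*N_{\sigma^\perp}\widehat{M}$; passage to ideal boundaries then yields \pref{eq:recursive}. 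For (3), any cone $\tau$ whose Lagrangian approaches $\del_\infty \bL_\sigma$ has $\tau^\perp$ either contained in $\sigma^\perp$ as a subtorus (trivially conic) or extending $\sigma^\perp$ linearly into $N_{\sigma^\perp}\widehat{M}$ (still conic as a linear subspace), so \pref{lem:biconicity-criterion} applies in both cases to give biconicity of each $\bL_\tau$ and hence of $\bL(\Sigma)$.

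Finally, for (4), I would take $R_\sigma$ to be the image of the polar hypersurface from the proof of \pref{lem:biconicity-criterion} restricted near $\del_\infty \bL_\sigma$; by construction it is a Weinstein thickening of $\del_\infty \bL_\sigma$ in the sense of \pref{eg:thickening} with skeleton $\del_\infty \bL_\sigma$. The nesting $R_\tau \cap \Nbd(\del_\infty \bL_\sigma) \subset R_\sigma$ follows from the fact that the polar pairing cutting out $R_\tau$ is a restriction of that cutting out $R_\sigma$ along $\tau^\perp \hookrightarrow \sigma^\perp$. The main obstacle I anticipate is ensuring simultaneous compatibility across the entire fan: the coordinate systems $\eta_\sigma$ cannot be chosen cone by cone but must verify (2) and (4) for every pair $\sigma \subset \bar{\tau}$. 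I would overcome this by induction on the codimension of $\sigma$ starting from top-dimensional cones, at each step extending the previously fixed $\eta_\tau$ coherently to the full neighborhood $\Nbd(\del_\infty \bL_\sigma)$.
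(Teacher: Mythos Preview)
Your treatment of (1)--(3) matches the paper's: apply \pref{cor:biconicity-criterion} to $\sigma^\perp \subset \widehat{M}$ for (1), use the product decomposition $\bL_\tau \cong \bL_{\tau/\sigma} \times \sigma$ near $\sigma$ for (2), and deduce (3) from these two. However, your argument for (4) has a genuine gap. You claim the nesting $R_\tau \cap \Nbd(\del_\infty \bL_\sigma) \subset R_\sigma$ holds because ``the polar pairing cutting out $R_\tau$ is a restriction of that cutting out $R_\sigma$''. This is false: the paper explicitly observes that for the naive polar hypersurfaces $P_\sigma = f_\sigma^{-1}(0)$ coming from \pref{lem:biconicity-criterion} there is \emph{no inclusion} between $\del_\infty P_\sigma$ and $\del_\infty P_\tau$ when $\sigma \subset \bar{\tau}$. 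The pairings $f_\sigma, f_\tau$ live on normal bundles to $\sigma^\perp, \tau^\perp$ of different ranks, and their zero loci simply do not nest.

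The paper's remedy is a real construction, not a formality. One first restricts $f_\sigma$ to a carefully chosen conic tubular neighborhood $V_\sigma$ of $\bL_\sigma$ (arranged so that $V_\sigma \cap V_{\sigma'} = \emptyset$ when $\bar\sigma \cap \bar\sigma' = \{0\}$), setting $g_\sigma = f_\sigma|_{V_\sigma}$ for each ray $\sigma$. For a $2$-cone $\tau$ spanned by rays $\sigma_1, \sigma_2$ one then defines $g_\tau = a_1 f_{\sigma_1}|_{V_\tau} + a_2 f_{\sigma_2}|_{V_\tau}$, where $(a_1, a_2) \colon V_\tau \to \tau = \langle \sigma_1, \sigma_2 \rangle$ is the canonical projection; the vanishing $a_i|_{V_{\sigma_j}} = 0$ for $i \neq j$ forces $g_\tau$ to agree with $g_{\sigma_i}$ on $V_{\sigma_i} \cap V_\tau$, which is exactly what gives $R_\tau \subset R_{\sigma_i}$ there. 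The induction proceeds by \emph{increasing} cone dimension, starting from rays, opposite to the direction you propose; and the essential content is this weighted-sum extension, not an automatic restriction property.
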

\begin{proof}
Applying Corollary
\pref{cor:biconicity-criterion}
to each submanifold
$\sigma^\perp \subset \widehat{M}$,
we obtain local coordinates
$\eta_\sigma \colon \Nbd(\del_\infty \bL_\sigma) \hookrightarrow J^1 \del_\infty \bL_\sigma$
near
$\del_\infty \bL_\sigma$
such that
$\bL_\sigma$
is biconic along
$\del_\infty \bL_\sigma$.
Provided
(1),
the property
(3)
follows from
(2),
which is a consequence of the identification
\begin{align*}
\del_\infty \bL_\tau
\cong
\tau^\perp \times \del_\infty(\tau / \langle \sigma \rangle \times \sigma)
\cong
(\tau / \langle \sigma \rangle)^\perp \times \tau / \langle \sigma \rangle  \times \del_\infty \sigma
\subset
\bL(\Sigma / \sigma) \times \del_\infty \sigma
\end{align*}
near
$\sigma$
of subsets of the stratified spaces
$\Sigma$
and
$\Sigma / \sigma$
for any
$\tau \in \Sigma$
with
$\{ 0 \} \neq \sigma \subset \bar{\tau}$.
Note that
subtori
$(\tau / \langle \sigma \rangle)^\perp, \tau^\perp$
of respective tori
$\widehat{M / \langle \sigma \rangle} \subset \widehat{M}$
are the same.
When
$\sigma = \{ 0 \}$
and
taking the ideal boundary
$\del_\infty \sigma$
makes no sense,
we have instead
\begin{align*}
\eta_\sigma (\del_\infty \bL_\tau \cap \Nbd(\del_\infty \bL_\sigma))
=
\del_\infty \bL_{\tau / \sigma} \times \{ 0 \}
\subset
T^* \sigma^\perp \times \{ 0 \}.
\end{align*}

It remains to show
(4).
Let
$P_\sigma$
be the polar hypersurface defined as the kernel of the pairing
\begin{align*}
f_\sigma
\colon
T^* U_\sigma
\cong
T^* N_{\sigma^\perp} \widehat{M}
=
N_{\sigma^\perp} \widehat{M}
\oplus
(N_{\sigma^\perp} \widehat{M})^\vee
\oplus
T^* \sigma^\perp
\to
\bR,
\end{align*}
which gives rise to the above local coordinates
$\Nbd_{P_\sigma}(\del_\infty T^*_{\sigma^\perp} \widehat{M})
\hookrightarrow
J^1 \del_\infty T^*_{\sigma^\perp} \widehat{M}$
as in the proof of
\pref{lem:biconicity-criterion}.
However,
there is no inclusion between
$\del_\infty P_\sigma, \del_\infty P_\tau$
for
$\sigma \subset \bar{\tau}$.
To remedy this issue,
one needs to modify the above local coordinates as follows.
For a $1$-dimensional cone
$\sigma$,
replace
$f_\sigma$
with
$g_\sigma = f_\sigma |_{V_\sigma}$
where
$V_\sigma \cong N_{\bL_\sigma} T^*\widehat{M}$
is a conic tubular neighborhood projecting to
$U_\sigma$
and
$W_\sigma \cong N_\sigma M_\bR$
via the identification
$T^* \widehat{M} \cong \widehat{M} \times M_\bR$,
chosen so that
$V_\sigma \cap V_{\sigma^\prime}$
when
$\bar{\sigma} \cap \bar{\sigma}^\prime = \{ 0 \}$.
Locally, we have
\begin{align*}
R_\sigma
=
g^{-1}_\sigma(0)
\cap
T^* \del_\infty T^*_{\sigma^\perp} \widehat{M} \times \{ 0 \}
\cong
T^* \del_\infty T^*_{\sigma^\perp} \widehat{M}
\end{align*}
and
the modified local coordinates
\begin{align*}
\Nbd_{g^{-1}_\sigma(0)}(\del_\infty T^*_{\sigma^\perp} \widehat{M})
\hookrightarrow
J^1 \del_\infty T^*_{\sigma^\perp} \widehat{M}
\end{align*}
satisfies the properties
(1), (2), (3).

For a higher dimensional cone
$\tau$,
one extends
$g_\sigma$
as follows.
When
$\dim \tau = 2$
and
$\tau$
is spanned by
$\sigma_1, \sigma_2$,
replace
$f_\tau$
with
\begin{align*}
g_\tau
=
a_1 f_{\sigma_1} |_{V_\tau}
+
a_2 f_{\sigma_2} |_{V_\tau}
\colon
V_\tau
\to
\bR
\end{align*}
where
$(a_1, a_2) \colon V_\tau \to \tau = \langle \sigma_1, \sigma_2 \rangle$
is the canonical projection.
The functions
$a_i$
take values in
$\bR_{\geq 0}$
and
satisfy
$a_1 |_{V_{\sigma_2} \cap V_\tau}
=
a_2 |_{V_{\sigma_1} \cap V_\tau}
=
0$.
Since we have
$(N_{\tau^\perp} \widehat{M})^\vee
\supset
N_\tau M_\bR
\subset
N_{\sigma_i} M_\bR
\subset
(N_{\sigma^\perp_i} \widehat{M})^\vee$
as a bundle over
$\tau^\perp$,
the restrictions
$(f_{\sigma_i} |_{V_\tau}) |_{V_{\sigma_i} \cap V_\tau}$
coincide with
$(g_{\sigma_i}) |_{V_{\sigma_i} \cap V_\tau}$.
Hence
$g_\tau$
is an extension of
$(g_{\sigma_1}, g_{\sigma_2})
\colon
V_{\sigma_1} \sqcup V_{\sigma_2}
\to
\bR$.
Locally,
we have
\begin{align*}
R_\tau
=
g^{-1}_\tau(0)
\cap
T^* \del_\infty T^*_{\tau^\perp} \widehat{M} \times \{ 0 \}
\cong
T^* \del_\infty T^*_{\tau^\perp} \widehat{M}
\end{align*}
and
the modified local coordinates
\begin{align*}
\Nbd_{g^{-1}_\tau(0)}(\del_\infty T^*_{\tau^\perp} \widehat{M})
\hookrightarrow
J^1 \del_\infty T^*_{\tau^\perp} \widehat{M}
\end{align*}
satisfies the properties
(1), (2), (3)
and
\begin{align*}
R_\tau \cap \Nbd_{g^{-1}_{\sigma_i}(0)}(\del_\infty T^*_{\sigma^\perp_i} \widehat{M}) \subset R_{\sigma_i}.
\end{align*}
Inductively,
one obtains the desired extensions for the other cases.
\end{proof}


\begin{remark}
Some readers of
\cite{GS1}
might wonder why the authors stated
\pref{lem:coordinates}(4).
As explained in
Section 3.3,
we glue two Weinstein domains along their isomorphic Weinstein hypersurfaces.
Here,
each Weinstein hypersurface
$R_\sigma$
is
defined in the modified polar hypersurface
$g^{-1}_\sigma(0)$
and
coincides with a Weinstein thickening of
$\del_\infty \bL_\sigma$.
\pref{lem:coordinates}(4)
guarantees compatibility of such local Weinstein pairs with the union
$\bL(\Sigma) = \bigcup_{\sigma \in \Sigma}\bL_\sigma$.
\end{remark}

\section{The proof of \pref{thm:fibration} explained}
In this section,
we review the proof of
\pref{thm:fibration},
filling in the details.
By
\pref{lem:filtration}
the fanifold
$\Phi$
of dimension
$n$
admits a filtration
\pref{eq:filtration1}.
We proceed by induction on
$k$.

\subsection{Base case} 
When
$k = 0$,
let
\begin{align*}
\widetilde{\bfW}(\Phi_0)
=
\bigsqcup_P T^* \widehat{M}_P, \
\widetilde{\bL}(\Phi_0)
=
\bigsqcup_P \bL(\Sigma_P)
\end{align*}
where
$P$
runs through all $0$-strata of
$\Phi$
and
each
$T^* \widehat{M}_P$
is equipped with the canonical Liouville structure from Example
\pref{eg:Weinstein}.
We fix an identification
$T^* \widehat{M}_P \cong \widehat{M}_P \times M_{P, \bR}$
to regard each
$\bL(\Sigma_P)$
as a conic Lagrangian submanifold of
$T^* \widehat{M}_P$.

\begin{lemma} \label{lem:0-W-subanalytic}
The manifold
$\widetilde{\bfW}(\Phi_0)$
is subanalytic Weinstein.
\end{lemma}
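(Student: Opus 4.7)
The plan is to reduce the statement to a direct verification on each factor. Since $\Phi$ has only finitely many strata by condition (i), and in particular only finitely many $0$-strata $P$, the space $\widetilde{\bfW}(\Phi_0)$ is a \emph{finite} disjoint union of copies of $T^*\widehat{M}_P$. Because subanalyticity is local and preserved under finite disjoint union, it suffices to exhibit, for each $P$, a subanalytic Weinstein structure on $T^*\widehat{M}_P$ compatible with the one fixed in the definition of $\widetilde{\bfW}(\Phi_0)$.

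First I would record that $\widehat{M}_P = M_P^\vee_\bR / M_P^\vee$ is a real torus, which carries a canonical real analytic structure coming from the quotient of $M_{P,\bR}^\vee$ by the lattice $M_P^\vee$; this makes $T^*\widehat{M}_P$ a real analytic manifold. Under the fixed identification $T^*\widehat{M}_P \cong \widehat{M}_P \times M_{P,\bR}$ of \pref{eg:Weinstein}, the standard Liouville form $\lambda_{st} = pdq$, symplectic form $\omega_{st} = d\lambda_{st}$, Liouville vector field $Z_{st} = p\del_p$ and Morse--Bott function $\phi_{st}(q,p) = \tfrac{1}{2}|p|^2$ are all polynomial in the coordinates $(q,p)$ on the universal cover, hence descend to real analytic (in particular subanalytic) tensors and functions on $T^*\widehat{M}_P$. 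A Weinstein domain which completes to $T^*\widehat{M}_P$ can be taken as $\{\phi_{st} \le 1\}$, whose defining inequality is polynomial and thus subanalytic, so the compact Liouville domain approximation used to witness subanalyticity is available.

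Next I would invoke the fact that the FLTZ Lagrangian $\bL(\Sigma_P) = \bigcup_{\sigma \in \Sigma_P} \sigma^\perp \times \sigma$ is itself a subanalytic (indeed semi-algebraic) conic Lagrangian, since each piece $\sigma^\perp \times \sigma$ is the product of a subtorus with a rational polyhedral cone. This is not strictly needed for the statement of \pref{lem:0-W-subanalytic} but will be needed in the inductive step; mentioning it here costs nothing and clarifies that the pair $(\widetilde{\bfW}(\Phi_0), \widetilde{\bL}(\Phi_0))$ is subanalytic.

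The step I expect to be trivial rather than obstructive: there is essentially no hard content — the proof is a bookkeeping exercise verifying that all structural data are polynomial/real analytic in the canonical affine coordinates on $T^*\widehat{M}_P$. The only point requiring minimal care is ensuring that the quotient by $M_P^\vee$ preserves subanalyticity, which follows because the covering map $M_{P,\bR}^\vee \times M_{P,\bR} \to \widehat{M}_P \times M_{P,\bR}$ is a finite-to-one local real analytic isomorphism when restricted to any fundamental domain, and subanalyticity can be checked on an open cover.
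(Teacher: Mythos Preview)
Your proposal is correct and takes essentially the same approach as the paper: the paper's proof is a two-sentence sketch noting that the $T^*\widehat{M}_P$ are Weinstein by construction and that real analytic subsets (and their products) are subanalytic, while you simply unpack these assertions by verifying explicitly that the standard Liouville data are polynomial in the affine coordinates on the universal cover. Your digression on the subanalyticity of $\bL(\Sigma_P)$ is not needed here---the paper defers exactly that observation to the next lemma on $\widetilde{\bL}(\Phi_0)$---but it does no harm.
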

\begin{proof}
Equipped with the canonical Liouville structure,
$T^* \widehat{M}_P$
become Weinstein as explained in Example
\pref{eg:Weinstein}.
In general,
real analytic subsets of real analytic manifolds are subanalytic
and
the product of subanalytic subsets of real analytic manifolds is subanalytic. 
\end{proof}

\begin{lemma}
The Lagrangian
$\widetilde{\bL}(\Phi_0) \subset \widetilde{\bfW}(\Phi_0)$
is subanalytic conic
and
contains
$\Core(\widetilde{\bfW}(\Phi_0))$.
\end{lemma}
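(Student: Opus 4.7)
The plan is to verify the three claimed properties — subanalyticity, conicity, and containment of the skeleton — one summand at a time, working in each cotangent bundle $T^*\widehat{M}_P \cong \widehat{M}_P \times M_{P,\bR}$ via the definition $\bL(\Sigma_P) = \bigcup_{\sigma \in \Sigma_P}\sigma^\perp \times \sigma$, and then to transfer each property across the finite disjoint union over the $0$-strata $P$ of $\Phi$.

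For subanalyticity, I would observe that each $\sigma \in \Sigma_P$ is a rational polyhedral cone in $M_{P,\bR}$, hence semi-algebraic and in particular subanalytic; and that each $\sigma^\perp \subset \widehat{M}_P$ is a closed (possibly disconnected) real-analytic subtorus, cut out by rational linear equations on $M^\vee_{P,\bR}/M^\vee_P$, hence also subanalytic. Products of subanalytic sets in real-analytic manifolds are subanalytic, and finite unions of subanalytic subsets remain subanalytic; since $\Sigma_P$ has finitely many cones and $\Phi_0$ has finitely many $0$-strata, this gives subanalyticity of $\widetilde{\bL}(\Phi_0)$.

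For conicity, under the identification $T^*\widehat{M}_P \cong \widehat{M}_P \times M_{P,\bR}$ the standard Liouville vector field $Z_{st} = p\del_p$ of Example \pref{eg:Weinstein} acts by positive dilation on the fiber factor $M_{P,\bR}$ and fixes the base. Each $\sigma$ is invariant under positive scaling because it is a cone, and $\sigma^\perp$ lives purely in the base, so $\sigma^\perp \times \sigma$ is preserved by the Liouville flow; a finite union of conic subsets is conic. For the skeletal containment, the skeleton of $T^*\widehat{M}_P$ with its standard Weinstein structure is the zero section, since the negative Liouville flow $p \mapsto e^{-t}p$ contracts every point to it. The trivial cone $\{0\} \in \Sigma_P$ contributes the stratum $\{0\}^\perp \times \{0\} = \widehat{M}_P \times \{0\}$ to $\bL(\Sigma_P)$, so the zero section lies inside $\bL(\Sigma_P)$; taking the disjoint union over $P$ yields $\Core(\widetilde{\bfW}(\Phi_0)) = \bigsqcup_P \widehat{M}_P \subset \widetilde{\bL}(\Phi_0)$.

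There is no real obstacle here: all three claims are structural consequences of the definition of the FLTZ Lagrangian, the semi-algebraic nature of rational polyhedral data, and the standard Weinstein structure on cotangent bundles, so the main task is simply to record the correct identification $T^*\widehat{M}_P \cong \widehat{M}_P \times M_{P,\bR}$ and invoke elementary stability properties of subanalytic and conic subsets under finite unions and disjoint union.
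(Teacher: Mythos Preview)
Your proposal is correct and follows essentially the same approach as the paper: identify the skeleton as the disjoint union of zero sections $\widehat{M}_P$, note that these appear as the $\{0\}^\perp \times \{0\}$ strata of the FLTZ Lagrangians, and deduce subanalyticity from the fact that each $\bL(\Sigma_P)$ is a finite union of products of a subtorus and a rational polyhedral cone. Your treatment is in fact slightly more detailed than the paper's, which does not spell out the conicity argument or the role of the trivial cone explicitly.
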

\begin{proof}
The skeleton
$\Core(\widetilde{\bfW}(\Phi_0))$
is the disjoint union
$\bigsqcup_P \widehat{M}_P$
of the zero sections
$\widehat{M}_P \subset T^* \widehat{M}_P$
for all $0$-strata
$P$
of
$\Phi$,
each of which is contained in the conic Lagrangian
$\bL(\Sigma_P)$
via the fixed identification
$T^* \widehat{M}_P \cong \widehat{M}_P \times M_{P, \bR}$.
In general,
the collection of subanalytic subsets of a real analytic manifold forms a Boolean algebra.
Hence
$\bL(\Sigma_P)$
is subanalytic,
as it is the union of products of
a real torus
and
an algebraic subset of a Euclidean space,
which are subanalytic subsets of
$\widehat{M}_P \times M_{P, \bR}$.
\end{proof}

Let
$\pi_0 \colon \widetilde{\bL}(\Phi_0) \to \Phi_0$
be the map induced by the projection to cotangent fibers.

\begin{lemma} \label{lem:0-original}
The triple
$(\widetilde{\bfW}(\Phi_0), \widetilde{\bL}(\Phi_0), \pi_0)$
satisfies the conditions
(1), \ldots, (4).
\end{lemma}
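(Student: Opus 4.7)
The plan is to verify the four conditions separately on each component $T^*\widehat{M}_P$, since $\widetilde{\bfW}(\Phi_0)$, $\widetilde{\bL}(\Phi_0)$, and $\pi_0$ all split as disjoint unions indexed by the $0$-strata $P$ of $\Phi$. On a fixed factor the strata are the open cones $\sigma\in\Sigma_P$, and under the identification $T^*\widehat{M}_P \cong \widehat{M}_P\times M_{P,\bR}$ the projection $\pi_0$ sends a point of $\sigma^\perp\times\sigma$ to its second coordinate. This immediately handles the first bullet of condition (1): a cone $\sigma$ of codimension $d$ (so $\dim\sigma = n - d$) gives $\pi_0^{-1}(\sigma) = \sigma^\perp\times\sigma$ with $\sigma^\perp$ a real $d$-torus.

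For the second and third bullets of (1), I would fix a linear splitting $M_{P,\bR} \cong \langle\sigma\rangle \oplus (M_{P,\bR}/\langle\sigma\rangle)$ together with a sufficiently small compatible tubular neighborhood $\Nbd(\sigma)$, and dualize to obtain $\widehat{M}_P \cong \sigma^\perp \times (\widehat{M}_P/\sigma^\perp)$ and hence a symplectic product decomposition $T^*\widehat{M}_P \cong T^*\sigma^\perp \times T^*(\widehat{M}_P/\sigma^\perp)$. Using the identifications $\tau^\perp \subset \sigma^\perp$ and $\tau \cap \Nbd(\sigma) \approx \sigma \times (\tau/\sigma)$ for every $\tau \geq \sigma$, I would check that $\bL(\Sigma_P)$ restricted to this neighborhood equals $\bL(\Sigma_\sigma)\times \sigma$, yielding both the neighborhood identification and the symplectomorphism of pairs.

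For conditions (2), (3), (4) the verification should be relatively direct. Each nontrivial $\Sigma_P$ contains higher-dimensional cones whose FLTZ pieces strictly contain the zero section, so $\widetilde{\bL}(\Phi_0)$ strictly contains $\Core(\widetilde{\bfW}(\Phi_0))$; consequently $\Phi_0$ is not closed and (2) is vacuous (or holds trivially if every $\Sigma_P = \{0\}$). For (3), any subfanifold $\Phi_0'\subset\Phi_0$ decomposes as a disjoint union of subfans $\Sigma_P'\subset\Sigma_P$; restricting to a neighborhood of $\bL(\Sigma_P')$ produces a stopped Weinstein sector via the construction in Remark \pref{rmk:sector}, with relative skeleton matching $\widetilde{\bL}(\Phi_0')$ by construction. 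For (4), the canonical vertical polarization of $T^*\widehat{M}_P$, with fibers $M_{P,\bR} \cong \langle\sigma\rangle \oplus (M_{P,\bR}/\langle\sigma\rangle)$, decomposes in the local chart into the base direction of $T^*S$ (since $S=\sigma$ is open in $\langle\sigma\rangle$) and the fiber direction of $T^*T^d = T^*\sigma^\perp$ (whose cotangent fibers are $M_{P,\bR}/\langle\sigma\rangle$), exactly as prescribed.

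The main obstacle is the third bullet of (1): confirming that a single choice of linear splitting of $M_{P,\bR}$ simultaneously identifies $\bL_\tau \cap \Nbd(\sigma)$ with $\bL_{\tau/\sigma}\times\sigma$ for every $\tau\geq\sigma$. This should work because the splitting reflects the natural quotient structure $\Sigma_P/\sigma = \Sigma_\sigma$ of the fan. By Lemma \pref{lem:coordinates} the resulting chart can be adjusted further to be compatible with the inductive gluing process in later steps, but at $k=0$ no such adjustment is yet required and the direct splitting suffices.
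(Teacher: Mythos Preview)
Your proposal is correct and follows essentially the same route as the paper: work component-by-component, identify $\pi_0^{-1}(\sigma)=\sigma^\perp\times\sigma$, use a linear splitting $M_{P,\bR}\cong\langle\sigma\rangle\oplus M_{P,\bR}/\langle\sigma\rangle$ to obtain the product chart and the symplectomorphism of pairs, and take the vertical polarization of $T^*\widehat{M}_P$ for condition~(4). The paper writes the symplectomorphism in explicit coordinates, namely $((\theta,\eta),(x,y))\mapsto(\theta,-y,\eta,x)$, which is exactly your splitting made concrete, and then records the induced identifications \pref{eq:0-decomposition}--\pref{eq:0-restriction} for reuse in the inductive step.

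One discrepancy is worth flagging. For condition~(3) the paper treats only the case where $\Phi_0'$ is a union of \emph{whole} components $\Sigma_P$, so that $\widetilde{\bfW}(\Phi_0')$ is a sub-disjoint-union of $\widetilde{\bfW}(\Phi_0)$ and is trivially a sector. Your version allows proper subfans $\Sigma_P'\subset\Sigma_P$ and appeals to Remark~\pref{rmk:sector} to produce the sector; but that remark is stated for a \emph{smooth} Legendrian $\cL$, whereas $\partial_\infty\bL(\Sigma_P')$ is singular in general. This is not a fatal gap---the statement can be salvaged using the local coordinates of \pref{lem:coordinates}(4)---but the paper's narrower reading of ``subfanifold'' at this stage avoids the issue entirely and is all that is needed for the induction.
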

\begin{proof}
(1)
It suffices to show the claim for a neighborhood
$\Nbd(P)$
of a single $0$-stratum
$P \subset \Phi$.
Let
$S_P \subset \Phi_0$
be the stratum of codimension
$d$
corresponding to a cone
$\sigma_P \in \Sigma_P$
via
$\phi_P$.
Then by definition of
$\pi_0$
we have
\begin{align*}
\pi^{-1}_0(S_P)
=
\bL_{\sigma_P}
=
\sigma^\perp_P \times \sigma_P
\cong
T^d \times S_P.
\end{align*}

From the isomorphism
\begin{align} \label{eq:0-decomposition}
\bL_{\tau_P}
=
\tau^\perp_P \times \tau_P
\cong
(\tau_P / \langle \sigma_P \rangle)^\perp \times \tau_P / \langle \sigma_P \rangle \times \sigma_P
=
\bL_{\tau_P / \sigma_P} \times \sigma_P
\end{align}
for any stratum
$\tau_P \in \Sigma_P$
with
$\sigma_P \subset \bar{\tau}_P$
it follows
\begin{align*}
\pi^{-1}_0(\Nbd(S_P))
=
\bigcup_{\tau_P \in \Sigma_P, \ \sigma_P \subset \bar{\tau}_P} \bL_{\tau_P}
\cong
\bL(\Sigma_P / \sigma_P) \times S_P.
\end{align*}

Consider the cotangent bundle
$T^* T^d \times T^* S_P$
with the standard symplectic form.
Since we have
$T^* S_P
\cong
S_P \times \bR^{\dim \widehat{M}_P - d}
\cong
S_P \times S_P$,
there is a symplectomorphism
\begin{align}  \label{eq:0-foliation}
T^* T^d \times T^* S_P
\cong
T^* T^d \times T^* S_P, \
((\theta, \eta), (x, y))
\mapsto
((\theta, \eta), (-y, x)).
\end{align}
Then an open embedding
\begin{align*}
T^d \times T^*_x S_P
\hookrightarrow
T^d \times T^{\dim \widehat{M}_P - d}, \
(\theta, y) \mapsto (\theta, -y)
\end{align*}
canonically extends to a symplectomorphism
\begin{align} \label{eq:0-symplectomorphism}
T^* T^d \times T^* S_P
\hookrightarrow
T^* \widehat{M}_P, \
((\theta, \eta), (x, y)) \mapsto (\theta, -y, \eta, x).
\end{align}
Since it restricts to isomorphisms
\begin{align} \label{eq:0-restriction}
(\tau_P / \langle \sigma_P \rangle)^\perp
\times
\tau_P / \langle \sigma_P \rangle
\times
\sigma_P
\cong
\tau^\perp_P \times \tau_P, \
((\theta, \eta), (x, 0)) \mapsto (\theta, 0, \eta, x),
\end{align}
the symplectomorphism
\pref{eq:0-symplectomorphism}
embeds
$\bL(\Sigma_{S_P}) \times S_P$
into
$\bL(\Sigma_P)$.

(2)
The fanifold
$\Phi_0$
is closed only if
$\dim M_{P, \bR} = 0$
for all $0$-strata
$P$.
Then the claim is trivial.

(3)
Since any subfanifold
$\Phi^\prime_0 \subset \Phi_0$
is the disjoint union of fans
$\Sigma_P$
for some $0$-strata of
$\Phi$,
by definition
$\widetilde{\bfW}(\Phi^\prime_0)
\subset
\widetilde{\bfW}(\Phi_0)$
is a Weinstein submanifold
and
we have
$\widetilde{\bL}(\Phi^\prime_0)
=
\widetilde{\bfW}(\Phi^\prime_0)
\cap
\widetilde{\bL}(\Phi_0)$.

(4)
Recall that
a Lagrangian polarization of a symplectic manifold
$W$
is a global section of the Lagrangian Grassmannian bundle
$\LGr(W)$
over
$W$.
Up to homotopy,
a Lagrangian polarization of
$W$
is equivalent to a real vector bundle
$B$
over
$W$
with an isomorphim
$B \otimes_\bR \bC = TW$.
When
$W$
is a cotangent bundle
$T^* \widehat{M}$,
the tautological foliation by cotangent fibers yields an isomorphism
$T T^* \widehat{M} = T^* \widehat{M} \otimes_\bR \bC$.
Consider the Lagrangian foliation of
$T^* T^d \times T^* S_P$
with leaves 
\begin{align*}
(T^*_\theta T^d \times \{ y \})
\times
(\{ \theta \} \times S_P), \
\theta \in T^d, \
y \in \bR^{\dim \widehat{M}_P - d} \cong S_P.
\end{align*}
Via the symplectomorphism
\pref{eq:0-foliation}
the leaf space get identified with the zero section,
which in turn is isomorphic to
$\pi^{-1}_0(S_P)$.
Along the base direction in
$T^* S_P$,
it is compatible with the inclusion
$S_P \hookrightarrow \overline{S^\prime_P}$
to any stratum
$S^\prime_P$
of codimension
$d^\prime$
of the induced stratification on
$\Nbd(S_P)$.
Along the fiber direction in
$T^* T^d$,
it is compatible with the inclusions
$T^*_\theta T^{d^\prime} \hookrightarrow T^*_\theta T^d$
induced by the quotient map
$T_{S_P} M_{P, \bR} \to T_{S^\prime_P} M_{P, \bR} |_{S_P}$
from Definition
\pref{dfn:fanifold}
for
$\theta \in T^{d^\prime} \cap T^d \subset \widehat{M}_P$.
Hence one obtains the desired polarization.
\end{proof}

\subsection{Special case}
Before moving to general cases,
we explicitly write down
the relevant results
and
their proofs
for Example
\pref{eg:closed},
where the filtration of
$\Phi$
from
\pref{lem:filtration}
is given by
\pref{eq:filters}.
Let
$\cL_1$
be the disjoint union of
\begin{align*}
\cL_{I_{ij}}
=
\pi^{-1}_0(I_{ij}) \cap \del_\infty \widetilde{\bL}(\Phi_0), \
1 \leq i < j \leq 4
\end{align*}
for all $1$-strata
and
\begin{align*}
\cL^1_{F}
=
\pi^{-1}_0(F) \cap \del_\infty \widetilde{\bL}(\Phi_0)
\end{align*}
for the $2$-stratum.

\begin{lemma} \label{lem:SLE1}
There are smooth Legendrian embeddings
\begin{align*}
\del_\infty \widetilde{\bfW}(\Phi_0)
\hookleftarrow
\cL_{I_{ij}},
\cL^1_{F}
\hookrightarrow
\del \bigsqcup_{1 \leq i < j \leq 4} (T^* \widehat{M}_{I_{ij}} \times T^* I_{ij, \circ}). 
\end{align*}
\end{lemma}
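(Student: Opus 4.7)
The embeddings into $\del_\infty \widetilde{\bfW}(\Phi_0)$ are essentially tautological: by definition $\cL_{I_{ij}}$ and $\cL^1_F$ are subsets of $\del_\infty \widetilde{\bL}(\Phi_0) \subset \del_\infty \widetilde{\bfW}(\Phi_0)$, and their smoothness as submanifolds follows from the product structure $\bL_\sigma = \sigma^\perp \times \sigma$. Concretely, $\cL_{I_{ij}}|_P$ is the subtorus $\sigma_{P,I_{ij}}^\perp$ at infinity in the ray direction $\sigma_{P,I_{ij}}$, and $\cL^1_F|_P$ is the arc $\sigma_{P,F} \cap S^1_\infty$ sitting over the point $\sigma_{P,F}^\perp \in \widehat{M}_P$. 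The Legendrian property is inherited from conicity of the FLTZ Lagrangian together with the fact that cotangent rays at infinity are Legendrian. The substantive content of the lemma is the construction of the embeddings into $\del \bigsqcup (T^*\widehat{M}_{I_{ij}} \times T^*I_{ij,\circ})$, which I would carry out vertex by vertex using the symplectomorphism \pref{eq:0-symplectomorphism} from the proof of \pref{lem:0-original}(1).

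For $\cL_{I_{ij}}$ the embedding is constructed as follows. At each endpoint $P$ of $I_{ij}$, the commutative diagram in Definition \pref{dfn:fanifold} provides a canonical identification $\sigma_{P,I_{ij}}^\perp \cong \widehat{M}_{I_{ij}}$ induced by the quotient $M_P \twoheadrightarrow M_{I_{ij}}$. I would send $\cL_{I_{ij}}|_P$ to the circle $\widehat{M}_{I_{ij}} \times \{P\}$ in the zero section $\widehat{M}_{I_{ij}} \times \del I_{ij,\circ}$ of the handle, which is smooth Legendrian in the sense of Definition \pref{dfn:extension}(1). Gluing the contributions from both endpoints yields the embedding of $\cL_{I_{ij}}$ into the target.

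For $\cL^1_F$, the two endpoints of the arc $\cL^1_F|_P$ at a vertex $P$ lie in the directions of the rays $\sigma_{P,I_{ij}}, \sigma_{P,I_{ik}}$ corresponding to the two edges adjacent to $P$. Near the $I_{ij}$-endpoint I would apply the decomposition \pref{eq:0-decomposition} with $\tau_P = \sigma_{P,F}$ and $\sigma_P = \sigma_{P,I_{ij}}$ to get the local product structure
$$\bL_{\sigma_{P,F}} \cap \Nbd(\bL_{\sigma_{P,I_{ij}}}) \cong \bL_{\sigma_{P,F}/\sigma_{P,I_{ij}}} \times \sigma_{P,I_{ij}},$$
and then apply the symplectomorphism \pref{eq:0-symplectomorphism} with $S_P = I_{ij}$ to embed the right-hand side into a neighborhood of $P$ inside $T^*\widehat{M}_{I_{ij}} \times T^*I_{ij,\circ}$. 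Passing to the ideal boundary, the piece of $\cL^1_F$ near the $I_{ij}$-endpoint lands in $\del_\infty \bL_{\sigma_{P,F}/\sigma_{P,I_{ij}}} \times \{P\} \subset \del (T^*\widehat{M}_{I_{ij}} \times T^*I_{ij,\circ})$, giving a smooth Legendrian embedding compatible with the extension data of Definition \pref{dfn:extension}(3) where $\bL_S = \bL_{\sigma_{P,F}/\sigma_{P,I_{ij}}}$.

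The hard part will be verifying compatibility: the symplectomorphism \pref{eq:0-symplectomorphism} depends on the vertex $P$, so for a single edge $I_{ij}$ the two identifications produced at $P_i$ and $P_j$ must both target the same handle $T^*\widehat{M}_{I_{ij}} \times T^*I_{ij,\circ}$. This follows from the commutative diagram in Definition \pref{dfn:fanifold}, which forces $\phi_{P_i}$ and $\phi_{P_j}$ to agree after passing to the quotient by $\langle \sigma_{P,I_{ij}} \rangle$ and hence to induce the same $\widehat{M}_{I_{ij}}$-structure on the attaching circle. Smoothness of both embeddings reduces to smoothness of the fanifold data and of the explicit symplectomorphism, while the Legendrian property of each piece is inherited via restriction of the Lagrangian condition to the relevant contact hypersurface.
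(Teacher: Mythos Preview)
Your approach is essentially the paper's, and for $\cL_{I_{ij}}$ it is correct as written: both you and the paper identify the two copies of the orthogonal circle with $\widehat{M}_{I_{ij}} \times \del I_{ij,\circ}$ via the quotient $M_P \twoheadrightarrow M_{I_{ij}}$ coming from the fanifold data.

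For $\cL^1_F$ there are two technical slips. First, after factorising $\bL_{\sigma_{P,F}}$ near the ray $\sigma_{P,I_{ij}}$ you should land in $\bL_{\sigma_{P,F}/\sigma_{P,I_{ij}}} \times \{P\}$, not in $\del_\infty \bL_{\sigma_{P,F}/\sigma_{P,I_{ij}}} \times \{P\}$: the quotient $\sigma_{P,F}/\langle\sigma_{P,I_{ij}}\rangle$ is a ray in $M_{I_{ij},\bR}$, and the arc piece gets identified with that half-line in a single cotangent fibre $T^*_x\widehat{M}_{I_{ij}}$ over the boundary point --- exactly what the paper writes as ``the positive half line in the cotangent fiber''. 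Taking a further $\del_\infty$ would collapse it to a point. Second, routing the argument through \eqref{eq:0-symplectomorphism} is an unnecessary detour: that map goes from $T^*T^d \times T^*S_P$ \emph{into} $T^*\widehat{M}_P$, so you are implicitly using its inverse, and the identification you actually need is just the quotient map (equivalently the local coordinates of \pref{lem:coordinates}(2)), which produces $\bL_{\sigma_{P,F}/\sigma_{P,I_{ij}}}$ directly inside $T^*\widehat{M}_{I_{ij}}$. Once these are fixed your argument coincides with the paper's. Your final compatibility paragraph is correct and, while not strictly part of the statement, is what makes the subsequent handle attachment well-defined.
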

\begin{proof}
It suffices to show the claim for
$\cL_{I_{12}}$
and
$\cL^1_{F}$.
Since 
$\del_{in} I_{12}$
consists of two $0$-strata
$P_1, P_2$
of
$\Phi$,
there is a smooth Legendrian embedding
\begin{align} \label{eq:I_12}
\cL_{I_{12}}
\cong
\del_\infty \bL_{\sigma^{P_1}_{I_{12}}}
\sqcup
\del_\infty \bL_{\sigma^{P_2}_{I_{12}}}
\hookrightarrow
\del_\infty \widetilde{\bfW}(\Phi_0)
\end{align}
for the cones
$\sigma^{P_i}_{I_{12}} \in \Sigma_{P_i}$
corresponding to
$I_{12}$.
The quotient maps
$M_{P_i, \bR}
\to
M_{P_i, \bR} / \langle \sigma^{P_i}_{I_{12}} \rangle
=
M_{I_{12}, \bR}$
from Definition
\pref{dfn:fanifold}
identify the images of
$\sigma^{P_i}_{I_{12}}$
with the origin of
$\Sigma_{I_{12}}$.
Hence
$\del_\infty \bL_{\sigma^{P_i}_{I_{12}}}$
are isomorphic to
$\widehat{M}_{I_{12}}$.
Regarding
$\widehat{M}_{I_{12}} \times \del I_{12, \circ}$
as a subset of
$T^* \widehat{M}_{I_{12}} \times \del T^* I_{12, \circ}$,
one obtains another smooth Legendrian embedding
\begin{align*}
\cL_{I_{12}}
\cong 
\widehat{M}_{I_{12}} \times \del I_{12, \circ}
\hookrightarrow
\del (T^* \widehat{M}_{I_{12}} \times T^* I_{12, \circ}).
\end{align*}

Since 
$\cL^1_F$
consists of four disjoint curves in
$F$,
there is a smooth Legendrian embedding
\begin{align} \label{eq:F}
\cL^1_F
\cong
\bigsqcup^4_{i=1} \del_\infty \bL_{\sigma^{P_i}_F}
\hookrightarrow
\del_\infty \widetilde{\bfW}(\Phi_0)
\end{align}
for the cones
$\sigma^{P_i}_F \in \Sigma_{P_i}$
corresponding to
$F$.
The quotient maps
$M_{P_i, \bR}
\to
M_{P_i, \bR} / \langle \sigma^{P_i}_{I_{12}} \rangle
=
M_{I_{ij}, \bR},
1 \leq i, j \leq 4$
from Definition
\pref{dfn:fanifold}
identify the images of
$\sigma^{P_i}_F$
with the rays of
$\Sigma_{I_{ij}}$.
Hence
$\del_\infty \bL_{\sigma^{P_i}_F}$
are isomorphic to the positive half line in the cotangent fiber of
$T^* \widehat{M}_{I_{12}}$
at
$x \in \widehat{M}_{I_{12}}$.
Regarding it as a subset of
$T^* \widehat{M}_{I_{12}} \times \del T^* I_{12, \circ}$,
one obtains another smooth Legendrian embedding
\begin{align*}
\cL^1_F
\hookrightarrow
T^*_x \widehat{M}_{I_{12}} \times \del I_{12, \circ}
\hookrightarrow
\del (T^* \widehat{M}_{I_{12}} \times T^* I_{12, \circ}).
\end{align*}
\end{proof}

\begin{remark}
In the proof of
\pref{thm:fibration},
apparently the authors seem not to consider
$\cL^1_F$
at this stage.
Without
$\cL^1_F$
we would not have the correct handle attachment locus.
\end{remark}

Provided
the chart
\pref{eq:charts}
for
$k = 0$
and
\pref{lem:coordinates}(4),
we may define
$\widetilde{\bfW}(\Phi_1)$
as the handle attachment
\begin{align*}
[\widetilde{\bfW}(\Phi_0)] \#_{\cL_1}  \bigsqcup_{1 \leq i < j \leq 4} [T^* \widehat{M}_{I_{ij}} \times T^* I_{ij, \circ}].
\end{align*}

\begin{lemma} \label{lem:1-W-subanalytic}
The manifold-with-boundary
$\widetilde{\bfW}(\Phi_1)$
is subanalytic Weinstein.
\end{lemma}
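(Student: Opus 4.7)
The plan is to verify both parts of the statement—being a Weinstein manifold-with-boundary and being subanalytic—using the handle-attachment machinery built up in Section 3, applied to the inductive base case \pref{lem:0-W-subanalytic}.

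For the Weinstein structure, I would start from the two ingredients of the attachment. By \pref{lem:0-W-subanalytic}, $\widetilde{\bfW}(\Phi_0)$ is Weinstein, and each summand $T^*\widehat{M}_{I_{ij}} \times T^*I_{ij,\circ}$ is a Weinstein manifold-with-boundary by \pref{eg:Weinstein}, after the convex modification of the Liouville form near $\del I_{ij,\circ}$ recalled in the remark preceding \pref{dfn:extension}. The smooth Legendrian embeddings of $\cL_1$ into both sides are provided by \pref{lem:SLE1}. To realize the gluing of \pref{dfn:gluing}, one needs adjusted pairs $(\lambda_0,\phi_0)$ on a tubular neighborhood of $\cL_1$ with the Liouville flow identified with cotangent scaling on $T^*(\cL_1 \times \bR_{\leq 0})$. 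This is exactly what \pref{lem:modify} furnishes on each side, and \pref{lem:coordinates} (1)–(4) supplies the compatible local coordinates $\eta_\sigma$ on the $\widetilde{\bfW}(\Phi_0)$ side that make the local Weinstein hypersurfaces $R_\sigma$ match up. Once the adjusted data are fixed, the completion of the glued domain is Weinstein with boundary, and \pref{rmk:sector} identifies this boundary structure as the expected Liouville sector (from the cylindrical ends transverse to the handle ends).

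For subanalyticity, all the building blocks are subanalytic. The factor $\widetilde{\bfW}(\Phi_0)$ is subanalytic by the inductive hypothesis, each $T^*\widehat{M}_{I_{ij}}$ is a real analytic manifold, and each $T^*I_{ij,\circ}$ is a product of a subanalytic interval with a Euclidean space, hence subanalytic. The Legendrians $\cL_{I_{ij}}$ and $\cL^1_F$ of \pref{lem:SLE1} are unions of products of tori, half-lines and faces of rational polyhedral cones, hence subanalytic. The modified coordinates of \pref{lem:coordinates} are cut out by polynomial pairings $f_\sigma$ and $g_\sigma$, so the gluing region $\Nbd(\cL_1) \hookrightarrow J^1\cL_1$ is subanalytic. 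Since the subanalytic subsets of a real analytic manifold form a Boolean algebra and are preserved under finite gluings along subanalytic open subsets, the resulting $\widetilde{\bfW}(\Phi_1)$ is subanalytic.

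The main obstacle is ensuring that the Liouville adjustment of \pref{lem:modify}, which is stated in the smooth category, can be realized inside the subanalytic class. I would handle this by performing the adjustment inside the standard model $T^*(\cL_1 \times \bR_{\leq 0})$: on this model both the standard Liouville form $pdq$ and the standard Morse--Bott function $\tfrac{1}{2}|p|^2$ are already adjusted and are polynomial (hence real analytic, hence subanalytic), while outside a fixed tubular neighborhood of $\cL_1$ the original subanalytic data are unchanged by the last conclusion of \pref{lem:modify}. The interpolation between these two regions takes place inside a subanalytic tubular neighborhood, and one may fix once and for all a subanalytic cutoff there, so the glued Weinstein data are subanalytic globally. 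With this, both assertions of the lemma are established.
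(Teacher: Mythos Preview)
Your argument is correct and follows the same approach as the paper's proof, which simply asserts that the handle attachment yields a Weinstein structure and that the union of subanalytic pieces is subanalytic by \pref{lem:0-W-subanalytic}. Your version is considerably more detailed, and in particular your third paragraph addresses a point the paper passes over in silence: whether the Liouville adjustment of \pref{lem:modify} can be arranged to stay within the subanalytic class.
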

\begin{proof}
As a result of the handle attachment,
$\widetilde{\bfW}(\Phi_1)$
is Weinstein.
Each
$T^* \widehat{M}_{I_{ij}} \times T^* I_{ij, \circ}$
and
the union of subanalytic subsets
are subanalytic.
Hence
$\widetilde{\bfW}(\Phi_1)$
is subanalytic by
\pref{lem:0-W-subanalytic}.
\end{proof}

\begin{lemma} \label{lem:1-biconicity}
There is a standard neighborhood
\begin{align} \label{eq:stdNbd1}
\eta_1
\colon
\Nbd_{\del_\infty \widetilde{\bfW}(\Phi_0)}(\cL_1)
\hookrightarrow
J^1 \cL_1
\end{align}
near
$\cL_1$
for which
$\widetilde{\bL}(\Phi_0)$
is biconic along
$\cL_1$
and
$\del_\infty \widetilde{\bL}(\Phi_0)$
locally factors as
\begin{align*}
\eta_1(\widetilde{\bL}(\Phi_0) \cap \Nbd_{\del_\infty \widetilde{\bfW}(\Phi_0)}(\cL_1))
=
\bigsqcup_{1 \leq i < j \leq 4}
(\bL(\Sigma_{I_{ij}}) \times \del I_{ij, \circ}).
\end{align*}
\end{lemma}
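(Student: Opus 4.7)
The plan is to assemble $\eta_1$ by applying \pref{lem:coordinates} to each local fan $\Sigma_P$ and then taking the disjoint union of the resulting charts. Because $\cL_1 = \bigsqcup_{i<j} \cL_{I_{ij}} \sqcup \cL^1_F$ and every component is contained in exactly one cotangent factor $T^*\widehat{M}_P$ of $\widetilde{\bfW}(\Phi_0)$, both biconicity and the product structure are local statements that can be verified separately in each factor.

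For each $0$-stratum $P$, the fanifold structure distinguishes in $\Sigma_P$ a ray $\sigma^P_{I_{ij}}$ for every adjacent edge $I_{ij}$ and the top-dimensional cone $\sigma^P_F$ corresponding to $F$. Applying \pref{lem:coordinates} to $\Sigma_P$ produces local charts $\eta_{\sigma^P_{I_{ij}}}$ near $\del_\infty \bL_{\sigma^P_{I_{ij}}}$ and $\eta_{\sigma^P_F}$ near $\del_\infty \bL_{\sigma^P_F}$, and property (3) of that lemma guarantees that $\bL(\Sigma_P)$ is biconic along each of these Legendrians. Since $\widetilde{\bL}(\Phi_0) = \bigsqcup_P \bL(\Sigma_P)$, the disjoint union of these charts exhibits $\widetilde{\bL}(\Phi_0)$ as biconic along every component of $\cL_1$.

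For the factorization, property (2) of \pref{lem:coordinates} applied at $\sigma = \sigma^P_{I_{ij}}$ gives
\[
\eta_{\sigma^P_{I_{ij}}}(\del_\infty \bL_\tau \cap \Nbd(\del_\infty \bL_{\sigma^P_{I_{ij}}}))
=
\bL_{\tau / \sigma^P_{I_{ij}}} \times \del_\infty \sigma^P_{I_{ij}} \times \{0\}
\]
for every $\tau \in \Sigma_P$ with $\sigma^P_{I_{ij}} \subset \bar{\tau}$. Taking the union over all such $\tau$ and invoking \pref{dfn:fanifold} to identify $\Sigma_P/\sigma^P_{I_{ij}}$ with $\Sigma_{I_{ij}}$ via the induced lattice isomorphism $M_{P, \bR}/\langle \sigma^P_{I_{ij}} \rangle \cong M_{I_{ij}, \bR}$ produces
\[
\eta_{\sigma^P_{I_{ij}}}(\del_\infty \bL(\Sigma_P) \cap \Nbd(\del_\infty \bL_{\sigma^P_{I_{ij}}})) = \bL(\Sigma_{I_{ij}}) \times \{P\} \times \{0\}.
\]
Since $\del I_{ij,\circ}$ consists of the two endpoints of $I_{ij}$, running over both endpoints of each edge and then over all four edges recovers the displayed product structure.

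The only real obstacle is coherence: without care, the coordinate choices for $\sigma^P_{I_{ij}}$ and $\sigma^P_F$ sharing the same ambient $T^*\widehat{M}_P$ could induce conflicting local Weinstein hypersurfaces, which would later obstruct the handle attachments defining $\widetilde{\bfW}(\Phi_1)$. This is exactly what \pref{lem:coordinates}(4) controls: it produces Weinstein hypersurfaces $R_{\sigma^P_{I_{ij}}}$ and $R_{\sigma^P_F}$ whose restrictions agree on overlaps whenever $\sigma^P_{I_{ij}} \subset \overline{\sigma^P_F}$. With this coherence in place, $\eta_1$ is simply the disjoint union of the local charts $\eta_{\sigma^P_{I_{ij}}}$ and $\eta_{\sigma^P_F}$ over all vertices $P$ and adjacent cones, and both claims of the lemma reduce to their local counterparts in \pref{lem:coordinates}.
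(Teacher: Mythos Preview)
Your proof is correct and follows essentially the same route as the paper: decompose $\cL_1$ into its connected components inside each $T^*\widehat{M}_P$, apply \pref{lem:coordinates} to each $\Sigma_P$ to obtain the charts $\eta_{\sigma^P_{I_{ij}}}$ (and $\eta_{\sigma^P_F}$), and read off biconicity from part (3) and the product factorization from part (2) via the identification $\Sigma_P/\sigma^P_{I_{ij}} \cong \Sigma_{I_{ij}}$. Your explicit discussion of coherence via \pref{lem:coordinates}(4) is more than the paper includes here---the paper invokes (4) only when setting up the handle attachment for $\widetilde{\bfW}(\Phi_1)$---but it is a welcome clarification and not a departure from the argument.
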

\begin{proof}
It suffices to show the claim for
$\cL_{I_{12}}$.
Since we have
\pref{eq:I_12},
the disjoint union
$\bL(\Sigma_{P_1}) \sqcup \bL(\Sigma_{P_2})$
is biconic along
$\cL_{I_{12}}$
for the standard coordinates
\begin{align*}
\eta_{\sigma^{P_1}_{I_{12}}}
\sqcup
\eta_{\sigma^{P_2}_{I_{12}}}
\colon
\Nbd_{\del_\infty T^* \widehat{M}_{P_1} \sqcup \del_\infty T^* \widehat{M}_{P_2}}(\cL_{I_{12}})
\hookrightarrow
J^1 \cL_{I_{12}}
\end{align*}
from
\pref{lem:coordinates}.
Then the local factorization of
$\del_\infty \bL(\Sigma_{P_1})
\sqcup
\del_\infty \bL(\Sigma_{P_2})$
follows from
\pref{eq:recursive}.
\end{proof}

We define
$\widetilde{\bL}(\Phi_1)$
as the extension through the disjoint union of the handles
$T^* \widehat{M}_{I_{ij}} \times T^* I_{ij, \circ}$
\begin{align*}
[\widetilde{\bL}(\Phi_0)] \#_{\cL_1}  \bigsqcup_{1 \leq i < j \leq 4} [\bL(\Sigma_{I_{ij}}) \times I_{ij, \circ}].
\end{align*}

\begin{lemma} \label{lem:1-L-subanalytic}
The Lagrangian
$\widetilde{\bL}(\Phi_1)
\subset
\widetilde{\bfW}(\Phi_1)$
is subanalytic conic
and
contains
$\Core(\widetilde{\bfW}(\Phi_1))$.
\end{lemma}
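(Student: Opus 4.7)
The plan is to verify the three assertions (subanalytic, conic, and containment of the skeleton) separately, mirroring the template used for $\widetilde{\bL}(\Phi_0)$ but now accounting for the extensions produced by the handle attachment.

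First, for subanalyticity, each handle component $\bL(\Sigma_{I_{ij}}) \times I_{ij,\circ}$ is a product of subanalytic sets. The FLTZ Lagrangian $\bL(\Sigma_{I_{ij}}) = \bigcup_{\sigma \in \Sigma_{I_{ij}}} \sigma^\perp \times \sigma$ is subanalytic by the same argument used in the proof of subanalyticity for $\widetilde{\bL}(\Phi_0)$ (union of products of tori with semialgebraic cones), and $I_{ij,\circ}$ is subanalytic as an open interval. The standard neighborhood $\eta_1$ of \pref{eq:stdNbd1} comes from the local coordinates constructed in \pref{lem:coordinates}, which are subanalytic by construction, so the gluing $\widetilde{\bL}(\Phi_0) \#_{\cL_1} \bigsqcup_{ij}(\bL(\Sigma_{I_{ij}}) \times I_{ij,\circ})$ is a finite union of subanalytic pieces identified along subanalytic charts and thus subanalytic.

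Next, for conicity, the induction hypothesis gives that $\widetilde{\bL}(\Phi_0)$ is conic, and \pref{lem:1-biconicity} upgrades this to biconicity along $\cL_1$. The paragraph preceding \pref{lem:biconicity} then guarantees that after passing to the completion $\widetilde{W}$ (i.e., after the adjustment of Liouville structure near $\cL_1$), the saturation of $\widetilde{\bL}(\Phi_0)$ remains conic. On the handle side, each $\bL(\Sigma_{I_{ij}}) \times I_{ij,\circ}$ is a product of conic Lagrangians in $T^* \widehat{M}_{I_{ij}}$ and in $T^* I_{ij,\circ}$ respectively, hence conic with respect to the product Liouville flow from Example~\pref{eg:Weinstein}. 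By the general description of $\bL \#_\cL \bL^\prime$ given immediately after \pref{lem:biconicity}, the glued object is a conic Lagrangian.

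For containment of the skeleton, apply the formula displayed after \pref{dfn:gluing}, which in our setting reads
\begin{align*}
\Core(\widetilde{\bfW}(\Phi_1))
=
\Core(\widetilde{\bfW}(\Phi_0), \cL_1)
\;\cup\;
\bigsqcup_{1 \leq i < j \leq 4} \Core(T^* \widehat{M}_{I_{ij}} \times T^* I_{ij,\circ}, \widehat{M}_{I_{ij}} \times \del I_{ij,\circ}).
\end{align*}
For each handle, $\Core(T^* \widehat{M}_{I_{ij}} \times T^* I_{ij,\circ}) = \widehat{M}_{I_{ij}} \times I_{ij,\circ}$ is the zero section, which lies inside $\bL(\Sigma_{I_{ij}}) \times I_{ij,\circ}$ because $\bL(\Sigma_{I_{ij}})$ always contains the subtorus $\{0\}^\perp \subset \widehat{M}_{I_{ij}}$ coming from the cone $\sigma = \{0\}$; the Liouville saturation of $\widehat{M}_{I_{ij}} \times \del I_{ij,\circ}$ is likewise absorbed by conicity of $\bL(\Sigma_{I_{ij}}) \times I_{ij,\circ}$ in the fiber directions. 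For the base piece, the induction hypothesis \pref{lem:0-original} gives $\Core(\widetilde{\bfW}(\Phi_0)) \subset \widetilde{\bL}(\Phi_0)$, and $\cL_1 \subset \del_\infty \widetilde{\bL}(\Phi_0)$ by construction; conicity of $\widetilde{\bL}(\Phi_0)$ then absorbs the Liouville saturation, giving $\Core(\widetilde{\bfW}(\Phi_0), \cL_1) \subset \widetilde{\bL}(\Phi_0)$. The main obstacle here is bookkeeping the modification of the Liouville structure from \pref{lem:modify}: one must check that the adjusted Liouville vector field near $\cL_1$ does not drag any skeleton point outside of $\widetilde{\bL}(\Phi_0)$, which follows because the modification is supported in $U_\epsilon(H_1)$ and the biconicity locally factorizes $\widetilde{\bL}(\Phi_0)$ compatibly with the fiber/base split of $T^*(\cL_1 \times \bR)$ by \pref{lem:biconicity}.
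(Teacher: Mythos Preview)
Your proof is correct and follows essentially the same approach as the paper: verify conicity via biconicity of both pieces along $\cL_1$ (invoking \pref{lem:1-biconicity}), check containment of the skeleton by decomposing $\Core(\widetilde{\bfW}(\Phi_1))$ into relative skeleta of the two sides, and deduce subanalyticity from the description of each piece as a finite union of products of tori with semialgebraic sets. The only cosmetic difference is that the paper phrases the skeleton argument in terms of the ``newly formed part'' being the saturation of the Liouville zero set in the gluing region, whereas you appeal directly to the general formula displayed after Definition~\pref{dfn:gluing}; these are the same content.
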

\begin{proof}
Since by
\pref{lem:1-biconicity}
and
\pref{lem:biconicity-criterion}
the Lagrangians
\begin{align*}
\widetilde{\bL}(\Phi_0) \subset \widetilde{\bfW}(\Phi_0), \
\bigsqcup_{1 \leq i < j \leq 4} [\bL(\Sigma_{I_{ij}}) \times I_{ij, \circ}]
\subset
\bigsqcup_{1 \leq i < j \leq 4} [T^* \widehat{M}_{I_{ij}} \times T^* I_{ij, \circ}]
\end{align*}
are biconic along
$\cL_1$,
the gluing
$\widetilde{\bL}(\Phi_1)$
remains conic.
The part of
$\Core(\widetilde{\bfW}(\Phi_1))$
newly formed by the handle attachment is the saturation of the zero set of the Liouville vector field on
\begin{align*}
\eta_1(\Nbd_{\del_\infty \widetilde{\bfW}(\Phi_0)}(\cL_{I_{ij}}))
\#_{\cL_{I_{ij}}}
\bigsqcup_{1 \leq i < j \leq 4} \Nbd_{\del [T^* \widehat{M}_{I_{ij}} \times T^* I_{ij, \circ}]}(\cL_{I_{ij}})
\subset
[\widetilde{\bfW}(\Phi_0)] \#_{\cL_1}  \bigsqcup_{1 \leq i < j \leq 4} [T^* \widehat{M}_{I_{ij}} \times T^* I_{ij, \circ}],
\end{align*}
which implies
$\Cone(\widetilde{\bfW}(\Phi_1))
\subset
\widetilde{\bL}(\Phi_1)$.
The  extension
$\widetilde{\bL}(\Phi_1)$
is subanalytic,
as it is the union of products of
a real torus
and
an algebraic subset of a Euclidean space
\end{proof}

Let
$\pi_1 \colon \widetilde{\bL}(\Phi_1) \to \Phi_1$
be the map induced by
$\pi_0$
and
the projections from
$T^* \widehat{M}_{I_{ij}} \times T^* I_{ij, \circ}$
to
the cotangent fiber direction in
$T^* \widehat{M}_{I_{ij}}$
and
the base direction in
$T^* I_{ij, \circ}$.

\begin{lemma} \label{lem:1-original}
The triple
$(\widetilde{\bfW}(\Phi_1), \widetilde{\bL}(\Phi_1), \pi_1)$
satisfies the conditions
(1), \ldots, (4).
\end{lemma}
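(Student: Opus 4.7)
The plan is to verify the four conditions of \pref{thm:fibration} for $(\widetilde{\bfW}(\Phi_1), \widetilde{\bL}(\Phi_1), \pi_1)$ by partitioning the strata of $\Phi_1$ into two classes: those already present in $\Phi_0$, for which the analysis of the base case carries over away from the gluing locus, and the newly introduced strata contained in the product pieces $\Sigma_{I_{ij}} \times I_{ij,\circ}$, which are handled by a direct computation on each attached handle.

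For condition (1), a new stratum has the form $S = \tau \times I_{ij,\circ}$ with $\tau \in \Sigma_{I_{ij}}$ a cone of appropriate codimension $d$ in $\Phi$. By construction of $\pi_1$ on the handle $T^* \widehat{M}_{I_{ij}} \times T^* I_{ij,\circ}$, we have $\pi_1^{-1}(S) = \bL_\tau \times I_{ij,\circ} = \tau^\perp \times S \cong T^d \times S$; the neighborhood description reduces to the decomposition of $\bL_{\tau'}$ for $\tau \subset \bar{\tau}'$ used in the base case, now applied to $\Sigma_{I_{ij}}$, yielding $\bL(\Sigma_{I_{ij}}/\tau) \times S = \bL(\Sigma_S) \times S$. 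The local symplectomorphism \pref{eq:charts} is obtained by pulling back the formula from the proof of \pref{lem:0-original} through $T^* \widehat{M}_{I_{ij}}$ and taking a product with $T^* I_{ij,\circ}$. For strata inherited from $\Phi_0$, the charts of \pref{lem:0-original} remain valid outside a small neighborhood of $\cL_1$, and inside that neighborhood they match the handle charts via the recursive factorization of \pref{lem:coordinates}(2), which identifies the ends of $\widetilde{\bfW}(\Phi_0)$ along $\cL_{I_{ij}}$ with the Weinstein pieces being attached.

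For condition (2), the fanifold $\Phi_1$ is not closed, since the $2$-stratum $F$ still satisfies $\del_{in} F \neq \del F_\circ$; the conclusion is therefore vacuous. For condition (3), any subfanifold $\Phi'_1 \subset \Phi_1$ canonically decomposes into a subfanifold of $\Phi_0$ together with a union of full handles $\Sigma_{I_{ij}} \times I_{ij,\circ}$; restricting the handle attachment accordingly produces a stopped Weinstein sector $\widetilde{\bfW}(\Phi'_1) \subset \widetilde{\bfW}(\Phi_1)$, and the identity $\widetilde{\bL}(\Phi'_1) = \widetilde{\bfW}(\Phi'_1) \cap \widetilde{\bL}(\Phi_1)$ is immediate from the construction of $\widetilde{\bL}(\Phi_1)$ as an extension through each handle. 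For condition (4), each handle $T^* \widehat{M}_{I_{ij}} \times T^* I_{ij,\circ}$ carries the tautological Lagrangian polarization by fibers of the first factor and bases of the second; extending the polarization provided on $\widetilde{\bfW}(\Phi_0)$ by \pref{lem:0-original}(4) reduces at each adjacent $0$-stratum $P_i$ to checking that the twisted symplectomorphism used there, restricted along the cone $\sigma^{P_i}_{I_{ij}}$, carries the product polarization on $T^* \widehat{M}_{I_{ij}}$ to the correct subspace of $T^* \widehat{M}_{P_i}$ compatibly with the handle side.

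The hard part will be precisely this compatibility at the gluing locus $\cL_1$ for condition (4): near a $0$-stratum $P_i$ the polarization is encoded by the nontrivial symplectomorphism mixing fiber and base directions, while on the handle side it is a plain product, and one must verify that the two agree after passing through the biconic local coordinates $\eta_1$ constructed from \pref{lem:coordinates}. The matching should follow by unravelling the restriction formula that produced the isomorphism $(\tau_P / \langle \sigma_P \rangle)^\perp \times \tau_P/\langle \sigma_P \rangle \times \sigma_P \cong \tau^\perp_P \times \tau_P$ in the base case along the ray $\sigma^{P_i}_{I_{ij}}$; once this is in hand, the remaining assertions are routine extensions of the arguments already used for $\Phi_0$.
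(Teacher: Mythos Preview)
Your approach and the paper's share the same ingredients---direct computation on the handles, the recursive factorization \pref{eq:recursive}, and matching at the gluing locus---but your organizing principle of ``partitioning the strata of $\Phi_1$ into two classes'' does not reflect the actual stratification. Apart from the $0$-strata $P_i$, no stratum of $\Phi_1$ lies entirely in $\Phi_0$ or entirely in a handle: each stratum $S \subset \Phi_1$ of positive dimension is itself a gluing
\[
S = (S_{P_1} \sqcup S_{P_2}) \#_{\sigma_{P_1}/\langle \sigma^{P_1}_{I_{12}}\rangle \times \del I_{12,\circ}} (\sigma_{P_1}/\langle \sigma^{P_1}_{I_{12}}\rangle \times I_{12,\circ})
\]
of pieces from $\Phi_0$ with a piece from the handle. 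What you call a ``new stratum'' $\tau \times I_{ij,\circ}$ is only the handle portion of such an $S$, so your computation of $\pi_1^{-1}(\tau \times I_{ij,\circ})$ does not by itself establish the first bullet of condition (1) for any stratum of $\Phi_1$.

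The paper accordingly does not partition strata but writes each $S$ explicitly as the gluing above, and then computes $\pi_1^{-1}(S)$, $\pi_1^{-1}(\Nbd(S))$, and the chart \pref{eq:charts} as corresponding gluings of the base-case constructions on the pieces; your ``matching via the recursive factorization of \pref{lem:coordinates}(2)'' is precisely what makes these gluings coherent, so once you reorganize along these lines the argument goes through. For (3) the paper simply reduces to the case $\Phi'_1 = \Phi_0$, and for (4) it exhibits the polarization directly on each glued chart $T^*T^d \times T^*S$ via the symplectomorphism \pref{eq:1-symplectomorphism}, rather than separately on $\widetilde{\bfW}(\Phi_0)$ and on the handles as you propose---this sidesteps the compatibility check you flag as ``the hard part,'' since the chart is already built from glued pieces.
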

\begin{proof}
(1)
It suffices to show the claim
when attaching the handle
$T^* \widehat{M}_{I_{12}} \times T^* I_{12, \circ}$
to
$\widetilde{\bfW}(\Phi_0)$.
Then we may assume that
\begin{align} \label{eq:1-filtration}
\Phi_0 = \Sigma_{P_1} \sqcup \Sigma_{P_2}, \
\Phi_1 = \Phi_0 \#_{\Sigma_{I_{12}} \times \del_{in} {I_{12}}} (\Sigma_{I_{12}} \times I_{12}).
\end{align}
Let
$S \subset \Phi_1$
be the stratum
\begin{align*}
(S_{P_1} \sqcup S_{P_2}) \#_{\sigma_{P_1} / \langle \sigma^{P_1}_{I_{12}} \rangle \times \del I_{12, \circ}} (\sigma_{P_1} / \langle \sigma^{P_1}_{I_{12}} \rangle \times I_{12, \circ})
\end{align*}
where
$S_{P_i}$
are the strata of the same codimension
$d = 0, 1$
corresponding to cones
$\sigma_{P_i} \in \Sigma_{P_i}$
via
$\phi_{P_i}$.
Then by definition of
$\pi_1$
and
\pref{lem:1-biconicity}
the inverse image
$\pi^{-1}_1(S)$
equals
\begin{align*}
\left( \bigsqcup^2_{i=1}[\bL_{\sigma_{P_i}}] \right)
\#_{\bL_{\sigma_{P_1} / \sigma^{P_1}_{I_{12}}} \times \del I_{12, \circ}}
[\bL_{\sigma_{P_1} / \sigma^{P_1}_{I_{12}}} \times I_{12, \circ}]
\cong
T^d \times \left( \left( \bigsqcup^2_{i=1}S_{P_i} \right) \#_{\sigma_{P_1} / \langle \sigma^{P_1}_{I_{12}} \rangle \times \del I_{12, \circ}} (\sigma_{P_1} / \langle \sigma^{P_1}_{I_{12}} \rangle \times I_{12, \circ}) \right).
\end{align*}

From the isomorphisms
\pref{eq:0-decomposition}
and
\begin{align} \label{eq:1-decomposition}
\tau^\perp_{P_i} \times \tau_{P_i}
\cong
(\tau_{P_i} / \langle \sigma^{P_i}_{I_{12}} \rangle)^\perp
\times
\tau_{P_i} / \langle \sigma_{P_i} \rangle
\times
\sigma_{P_i} / \langle \sigma^{P_i}_{I_{12}} \rangle
\times
I_{12}
\end{align}
for any stratum
$\tau_{P_i} \in \Sigma_{P_i}$
with
$\sigma_{P_i} \subset \bar{\tau}_{P_i}$
it follows
\begin{align*}
\pi^{-1}_1(\Nbd(S))
=
\bigcup_{\tau_{P_1}, \tau_{P_2}}
\left([ \bL_{\tau_{P_1}} \sqcup \bL_{\tau_{P_2}}]
\#_{\bL_{\tau_{P_1} / \sigma^{P_1}_{I_{12}}} \times \del I_{12, \circ}}
[\bL_{\tau_{P_1} / \sigma^{P_1}_{I_{12}}} \times I_{12, \circ}] \right)
\cong
\bL(\Sigma_{P_1} / \sigma_{P_1}) \times S,
\end{align*}
where
$\tau_{P_i}$
run through cones in
$\Sigma_{P_i}$
with
$\sigma_{P_i} \subset \bar{\tau}_{P_i}$
mapping to the same cone under the quotient maps
$M_{P_i, \bR}
\to
M_{P_i, \bR} / \langle \sigma_{P_i} \rangle
=
M_{S, \bR}$
from Definition
\pref{dfn:fanifold}.

Consider the symplectomorphism
\begin{align} \label{eq:1-symplectomorphism}
T^* T^d \times T^* S
\hookrightarrow 
[T^* \widehat{M}_{P_1} \sqcup T^* \widehat{M}_{P_2}]
\#_{\cL_1}
[T^* \widehat{M}_{I_{12}} \times T^* I_{12, \circ}]
\end{align}
induced by
the symplectomorphisms
\begin{align} \label{eq:1-symplectopieces}
T^* T^d \times T^* S_{P_i}
\hookrightarrow
T^* \widehat{M}_{P_i}, \
T^* T^d \times T^* (\sigma_{P_1} / \langle \sigma^{P_1}_{I_{12}} \rangle)
\hookrightarrow
T^* \widehat{M}_{I_{12}}
\end{align}
defined as
\pref{eq:0-symplectomorphism}
and
the identity on
$T^* I_{12, \circ}$.
Since it restricts to isomorphisms
\begin{align} \label{eq:1-restriction}
\bL(\Sigma_{S_{P_i}}) \times S_{P_i}
\hookrightarrow
\bL(\Sigma_{P_i}), \
\bL(\Sigma_{I_{12}} / (\sigma_{P_1} / \langle \sigma^{P_1}_{I_{12}} \rangle)) \times \sigma_{P_1} / \langle \sigma^{P_1}_{I_{12}} \rangle
\hookrightarrow
\bL(\Sigma_{I_{12}})
\end{align}
defined as
\pref{eq:0-restriction},
the symplectomorphism
\pref{eq:1-symplectomorphism}
embeds
$\bL(\Sigma_S) \times S$
into
$\widetilde{\bL}(\Phi_1)$.

(2)
Here
$\Phi_1$
is not closed.

(3)
It suffices to show the claim for
$\Phi^\prime_1 = \Phi_0$.
Then by
definition
and
Remark
\pref{rmk:sector}
\begin{align*}
\widetilde{\bfW}(\Phi^\prime_1)
=
T^* \widehat{M}_{P_1}
\sqcup 
T^* \widehat{M}_{P_2}
\end{align*}
determines a stopped Weinstein sector in
$\widetilde{\bfW}(\Phi_1)$
with relative skeleton
$\widetilde{\bL}(\Phi^\prime_1)
=
\widetilde{\bfW}(\Phi^\prime_1)
\cap
\widetilde{\bL}(\Phi_1)$.
Here,
one obtains
$\widetilde{\bfW}(\Phi^\prime_1)$
by completing
$[T^* \widehat{M}_{P_1}]
\sqcup 
[T^* \widehat{M}_{P_2}]$
along the modified Liouville flow.

(4)
Consider the Lagrangian foliation of
$T^* T^d \times T^* S$
with leaves 
\begin{align*}
(T^*_\theta T^d \times \{ y \})
\times
(\{ \theta \} \times T^*_y S), \
\theta \in T^d, \
y \in S.
\end{align*}
Via the symplectomorphism
$T^* T^d \times T^* S \cong T^* T^d \times T^* S$
induced by
\pref{eq:1-symplectomorphism},
the leaf space get identified with the zero section,
which in turn is isomorphic to
$\pi^{-1}_1(S)$.
For the rest,
the proof of
\pref{lem:0-original}(4)
carries over.
\end{proof}


In our current setting,
$F$
is the only top dimensional stratum of
$\Phi$,
which is interior.
Let
\begin{align*}
\cL_2
=
\cL_ F
=
\pi^{-1}_1(F) \cap \del_\infty \widetilde{\bL}(\Phi_1).
\end{align*}

\begin{lemma}
There are smooth Legendrian embeddings
\begin{align*}
\del_\infty \widetilde{\bfW}(\Phi_1)
\hookleftarrow
\cL_F
\hookrightarrow
\del (T^* \widehat{M}_F \times T^* F_\circ)
\cong
\del (T^* F_\circ). 
\end{align*}
\end{lemma}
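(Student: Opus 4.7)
The plan is to adapt the strategy of \pref{lem:SLE1} one level up. Since $F$ is the unique top-dimensional interior stratum, its associated normal fan $\Sigma_F$ is trivial and $\widehat{M}_F$ is a point, so the target $\del(T^*\widehat{M}_F \times T^* F_\circ)$ reduces to $\del(T^* F_\circ)$.

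First I would decompose $\cL_F$ along $\del F_\circ$ into four arcs at the vertices plus four open segments along the edges. Near each vertex $P_i$, the relevant contribution comes from the maximal cone $\sigma^{P_i}_F \in \Sigma_{P_i}$ corresponding to $F$: its FLTZ Lagrangian $\bL_{\sigma^{P_i}_F} = (\sigma^{P_i}_F)^\perp \times \sigma^{P_i}_F$ is a two-dimensional cone in $T^* \widehat{M}_{P_i}$ (the perp is a point since $\dim \sigma^{P_i}_F = 2$), whose ideal boundary is a smooth arc, exactly as in the analysis of $\cL^1_F$ in the proof of \pref{lem:SLE1}. Near each edge $I_{ij}$, the new contribution from the attached handle comes from the ray $\sigma^{I_{ij}}_F \in \Sigma_{I_{ij}}$ pointing into $F$: the piece $\bL_{\sigma^{I_{ij}}_F} \times I_{ij, \circ} = \{\mathrm{pt}\} \times \sigma^{I_{ij}}_F \times I_{ij, \circ}$ has ideal boundary $\{\mathrm{pt}\} \times \del_\infty \sigma^{I_{ij}}_F \times I_{ij, \circ} \cong I_{ij, \circ}$.

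To glue the vertex arcs to the edge segments, I would apply \pref{lem:coordinates}(2) with $\sigma = \sigma^{P_i}_{I_{ij}}$ and $\tau = \sigma^{P_i}_F$: since the quotient map $M_{P_i, \bR} \to M_{I_{ij}, \bR}$ from Definition \pref{dfn:fanifold} sends $\sigma^{P_i}_F$ onto $\sigma^{I_{ij}}_F$, the recursive formula (\ref{eq:recursive}) gives the standard local model in which each endpoint of $I_{ij, \circ}$ attaches onto the corresponding endpoint of the arc $\del_\infty \bL_{\sigma^{P_i}_F}$. Concatenation yields a smooth closed curve canonically identified with $\del F_\circ$. The embedding into $\del_\infty \widetilde{\bfW}(\Phi_1)$ is then tautological, with the Legendrian condition following from conicity (\pref{lem:1-L-subanalytic}) together with \pref{lem:coordinates}(1),(3). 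For the second embedding, the identification $\cL_F \cong \del F_\circ$ realizes $\cL_F$ as the boundary of the zero section of $T^* F_\circ$, which is a smooth Legendrian in $\del(T^* F_\circ)$.

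The most delicate step is verifying $C^\infty$-smoothness of $\cL_F$ at the four junctions where each edge segment meets a vertex arc, since these are precisely the points where the handle attachments of \pref{lem:biconicity-criterion} modify the local coordinates. Carefully tracking the modified polar hypersurfaces from the proof of \pref{lem:coordinates}(4) through the gluing construction should suffice to close the argument.
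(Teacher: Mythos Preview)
Your proposal is correct and follows essentially the same approach as the paper: decompose $\pi^{-1}_1(F)$ into the pieces $\bL_{\sigma^{P_i}_F}$ over the vertices and $\bL_{\sigma^{P_i}_F/\sigma^{P_i}_{I_{ij}}}\times I_{ij,\circ}$ over the edges, glue them, and identify the result with $\widehat{M}_F\times\del F_\circ\cong\del F_\circ$. The paper's version is shorter only because it cites the second bullet of \pref{lem:1-original}(1) as a black box rather than unpacking the gluing explicitly as you do; your extra care about the $C^\infty$-smoothness at the four junctions is a legitimate technical point that the paper absorbs into that citation and into \pref{lem:coordinates}(4).
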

\begin{proof}
Since
$\pi^{-1}_1(F)$
is the gluing of
\begin{align*}
\bigsqcup^4_{i=1} \bL_{\sigma^{P_i}_F}, \
\bigsqcup_{1 \leq i < j \leq 4} \bL_{\sigma^{P_i}_F / \sigma^{P_i}_{I_{ij}}} \times I_{ij, \circ}
\end{align*}
by the second bullet of
\pref{lem:1-original}(1),
there is a smooth Legendrian embedding
\begin{align*}
\cL_F
\cong
\widehat{M}_F \times \del F_\circ
\cong
\del F_\circ
\hookrightarrow
\del_\infty \widetilde{\bfW}(\Phi_1).
\end{align*}
On the other hand,
$\widehat{M}_F \times \del F_\circ$
can be regarded as a smooth Legendrian in
\begin{align*}
T^* \del F_\circ
\cong
T^* \widehat{M}_F \times T^* \del F_\circ
\subset
\del (T^* \widehat{M}_F \times T^* F_\circ)
\cong
\del T^* F_\circ.
\end{align*}
\end{proof}

We define
$\widetilde{\bfW}(\Phi_2)$
as the handle attachment
\begin{align*}
[\widetilde{\bfW}(\Phi_1)] \#_{\cL_2} [T^* \widehat{M}_F \times T^* F_\circ]
\cong
[\widetilde{\bfW}(\Phi_1)] \#_{\cL_2} [T^* F_\circ].
\end{align*}

\begin{lemma}
The manifold-with-boundary
$\widetilde{\bfW}(\Phi_2)$
is subanalytic Weinstein.
\end{lemma}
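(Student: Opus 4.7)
The plan is to replicate verbatim the argument used in \pref{lem:1-W-subanalytic}, since the step from $\Phi_1$ to $\Phi_2$ is structurally identical to the one from $\Phi_0$ to $\Phi_1$: a single Weinstein handle attachment along a smooth Legendrian.

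First I would observe that $\widetilde{\bfW}(\Phi_2)$ is, by construction, the handle attachment $[\widetilde{\bfW}(\Phi_1)] \#_{\cL_2} [T^* \widehat{M}_F \times T^* F_\circ]$, so by Definition \pref{dfn:extension} and the general Weinstein gluing recipe of Definition \pref{dfn:gluing} (together with \pref{lem:modify}, which supplies the adjusted Liouville form near the gluing locus), the result carries a canonical Weinstein structure. This takes care of the symplectic-geometric half of the claim with no new input.

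For subanalyticity, the key remarks are the following. The attached handle $T^* \widehat{M}_F \times T^* F_\circ \cong T^* F_\circ$ (using $\dim M_F = 0$) is a real analytic manifold and hence trivially subanalytic in itself; moreover, the Weinstein thickening data $\eta_2 \colon \Nbd_{\del_\infty \widetilde{\bfW}(\Phi_1)}(\cL_2) \hookrightarrow J^1 \cL_2$ supplied by \pref{lem:coordinates} (applied to the normal fans at the vertices, then transported through the previous handle attachment) can be taken subanalytic, so the identification along which we glue is subanalytic. By \pref{lem:1-W-subanalytic} the part $\widetilde{\bfW}(\Phi_1)$ is already subanalytic, and since the collection of subanalytic subsets of a real analytic ambient forms a Boolean algebra closed under finite unions and products (as invoked in \pref{lem:0-W-subanalytic}), the resulting glued space $\widetilde{\bfW}(\Phi_2)$ is subanalytic.

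There is no genuine obstacle here: the only thing one must not slur over is that the modified Liouville structure near $\cL_2$ remains subanalytic. This follows because the modification in \pref{lem:modify} is local in the contact surrounding $U_\epsilon(\cL_2)$, which is subanalytic, and because the Liouville flow on the handle is the cotangent scaling, itself real analytic. Consequently the step is purely a bookkeeping repetition of \pref{lem:1-W-subanalytic}, with no new geometric content.
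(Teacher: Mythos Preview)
Your proposal is correct and matches the paper's approach exactly: the paper's proof is the single sentence ``The proof of \pref{lem:1-W-subanalytic} carries over,'' and you have simply unpacked that sentence. Your additional remark about the subanalyticity of the modified Liouville structure near $\cL_2$ is more careful than what the paper records, but not in conflict with it.
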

\begin{proof}
The proof of
\pref{lem:1-W-subanalytic}
carries over.
\end{proof}

\begin{lemma} \label{lem:2-biconicity}
There is a standard neighborhood
\begin{align} \label{eq:stdNbd2}
\eta_2
\colon
\Nbd_{\del_\infty \widetilde{\bfW}(\Phi_1)}(\cL_2)
\hookrightarrow
J^1 \cL_2
\end{align}
near
$\cL_2$
for which
$\widetilde{\bL}(\Phi_1)$
is biconic along
$\cL_2$
and
$\del_\infty \widetilde{\bL}(\Phi_1)$
locally factors as
\begin{align*}
\eta_2(\widetilde{\bL}(\Phi_1) \cap \Nbd_{\del_\infty \widetilde{\bfW}(\Phi_1)}(\cL_2))
=
\bL(\Sigma_F) \times \del F_\circ
\cong
\del F_\circ.
\end{align*}
\end{lemma}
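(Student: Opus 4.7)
The plan is to mimic the proof of \pref{lem:1-biconicity}, but now the ambient $\widetilde{\bfW}(\Phi_1)$ is itself the result of a handle attachment, so I would build $\eta_2$ piece-by-piece over (a) the neighborhoods of the $0$-strata $P_i$ inherited from $\widetilde{\bfW}(\Phi_0)$ and (b) the attached handles $T^*\widehat{M}_{I_{ij}} \times T^* I_{ij,\circ}$, and then verify that the local choices glue. By the second bullet of \pref{lem:1-original}(1), $\pi^{-1}_1(F)$ is the gluing of $\bigsqcup_i \bL_{\sigma^{P_i}_F}$ with $\bigsqcup_{ij} \bL_{\sigma^{P_i}_F / \sigma^{P_i}_{I_{ij}}} \times I_{ij,\circ}$, so $\cL_F$ decomposes accordingly and I can treat each summand separately.

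First, on the portion of $\cL_F$ sitting in $\del_\infty T^* \widehat{M}_{P_i}$, I would apply \pref{lem:coordinates} directly to the cone $\sigma^{P_i}_F \in \Sigma_{P_i}$. This provides local coordinates $\eta_{\sigma^{P_i}_F}\colon \Nbd(\del_\infty \bL_{\sigma^{P_i}_F}) \hookrightarrow J^1 \del_\infty \bL_{\sigma^{P_i}_F}$ along which $\bL(\Sigma_{P_i})$ is biconic, and the recursive identity \pref{eq:recursive} gives the factorization of $\del_\infty \bL(\Sigma_{P_i})$ as $\bL(\Sigma_{P_i}/\sigma^{P_i}_F) \times \del_\infty \sigma^{P_i}_F \times \{0\}$; since the functor $\Exit(\Phi) \to \Fan^\twoheadrightarrow$ carries the exit path from $P_i$ to $F$ to the quotient $\Sigma_{P_i} \twoheadrightarrow \Sigma_{P_i}/\sigma^{P_i}_F \cong \Sigma_F$, this is exactly the local model predicted by the statement. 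Second, on the portion of $\cL_F$ sitting inside a handle $T^*\widehat{M}_{I_{ij}} \times T^* I_{ij,\circ}$, I would use the product of \pref{lem:coordinates} for the cone $\sigma^{I_{ij}}_F \in \Sigma_{I_{ij}}$ (in the $T^*\widehat{M}_{I_{ij}}$-factor) with the identity on $T^* I_{ij,\circ}$, yielding analogous biconicity and factorization on that piece.

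The key compatibility step is to glue these local models into a single $\eta_2$. Here I would invoke \pref{lem:coordinates}(4): the modified Weinstein hypersurfaces $R_\sigma$ satisfy $R_\tau \cap \Nbd(\del_\infty \bL_\sigma) \subset R_\sigma$ whenever $\sigma \subset \bar{\tau}$. Taking $\sigma = \sigma^{P_i}_{I_{ij}}$ and $\tau = \sigma^{P_i}_F$, this nesting is precisely what makes the coordinates $\eta_{\sigma^{P_i}_F}$ compatible with the ones used to glue in the handle $T^*\widehat{M}_{I_{ij}} \times T^* I_{ij,\circ}$ at the previous induction step \pref{eq:stdNbd1}. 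Combined with the product form of the handle and the fact (from \pref{lem:modify}) that the adjustment of Liouville structures was canonical up to contractible choice, this guarantees that the two local coordinate systems agree on their overlap and assemble into a single standard neighborhood $\eta_2$. Biconicity of $\widetilde{\bL}(\Phi_1)$ along $\cL_2$ is local and holds on each piece by \pref{lem:coordinates}(3) (respectively \pref{lem:biconicity-criterion} in the handle), hence survives the gluing.

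Finally, the factorization on the right-hand side of the statement collapses because $F$ is the top-dimensional stratum: the lattice $M_F$ is trivial, $\widehat{M}_F$ is a point, and $\Sigma_F = \{0\}$, so $\bL(\Sigma_F) \times \del F_\circ \cong \del F_\circ$, in agreement with the local factorizations computed on each piece. The main obstacle I expect is precisely the gluing in the previous paragraph: checking that, after the Weinstein adjustments of \pref{lem:modify} performed during the attachment of the $1$-stratum handles, the ``old'' coordinates $\eta_{\sigma^{P_i}_F}$ inherited from $\widetilde{\bfW}(\Phi_0)$ and the ``new'' coordinates on the handle truly match along $\widehat{M}_{I_{ij}} \times \del I_{ij,\circ}$ to within the flexibility allowed by \pref{lem:coordinates}(4). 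Everything else reduces to unpacking the product decompositions used already in the proof of \pref{lem:1-original}(1).
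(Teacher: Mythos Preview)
Your proposal is correct and follows essentially the same approach as the paper: decompose $\cL_F$ into the pieces over the $P_i$ and over the handles $T^*\widehat{M}_{I_{ij}}\times T^*I_{ij,\circ}$, apply \pref{lem:coordinates} on each piece (the paper writes the handle coordinates as $\eta_{\sigma^{P_i}_F/\sigma_{I_{ij}}}\times\can_{I_{ij}}$, which is your product description), and deduce the factorization from \pref{eq:recursive}. The one place you are more explicit than the paper is the gluing step---the paper simply says ``by construction of $\widetilde{\bL}(\Phi_1)$ these coordinates glue,'' whereas you correctly identify \pref{lem:coordinates}(4) as the underlying reason; your appeal to \pref{lem:modify} (contractibility of choices) is not really the mechanism here, since what matters is that the handle attachment at step~$k=1$ was performed using coordinates nested inside $R_{\sigma^{P_i}_{I_{ij}}}$, so compatibility is built in rather than obtained post hoc.
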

\begin{proof}
Along the smooth Legendrian
\begin{align*}
\bigsqcup^4_{i=1} \del_\infty \bL_{\sigma^{P_i}_F}
\subset
\cL_F \cap \del_\infty \widetilde{\bL}(\Phi_0),
\end{align*}
the disjoint union
$\bigsqcup^4_{i=1} \bL(\Sigma_{P_i})$
is biconic for the standard coordinates
\begin{align*}
\bigsqcup^4_{i=1} \eta_{\sigma^{P_i}_F}
\colon
\Nbd_{\del_\infty \widetilde{\bfW}(\Phi_0)}(\cL_F \cap \del_\infty \widetilde{\bL}(\Phi_0))
\hookrightarrow
J^1 \cL_F
\end{align*}
from
\pref{lem:coordinates}.
Along the smooth Legendrian
\begin{align*}
\bigsqcup_{1 \leq i < j \leq 4} (\del_\infty \bL_{\sigma^{P_i}_F / \sigma_{I_{ij}}} \times I_{ij, \circ})
\subset
\bigsqcup_{1 \leq i < j \leq 4} \del_\infty T^* \widehat{M}_{I_{ij}} \times T^* I_{ij, \circ}
\subset
\bigsqcup_{1 \leq i < j \leq 4} \del_\infty (T^* \widehat{M}_{I_{ij}} \times T^* I_{ij, \circ}),
\end{align*}
the Lagrangian
$\bigsqcup_{1 \leq i < j \leq 4} (\bL(\Sigma_{I_{ij}}) \times I_{ij, \circ})$
is biconic for the disjoint union
\begin{align*}
\bigsqcup_{1 \leq i < j \leq 4} (\eta_{\sigma^{P_i}_F / \sigma_{I_{ij}}} \times \can_{I_{ij}})
\colon
\Nbd_{\del_\infty (T^* \widehat{M}_{I_{ij}} \times T^* I_{ij, \circ})}(\cL_F \cap \bigsqcup_{1 \leq i < j \leq 4} \del_\infty (\bL(\Sigma_{I_{ij}}) \times I_{ij, \circ}))
\hookrightarrow
J^1 \cL_F
\end{align*}
of the product of the standard coordinates from
\pref{lem:coordinates}
and
the canonical coordinates
$\can_{I_{ij}}$
on
$\bigsqcup_{1 \leq i < j \leq 4} T^* I_{ij, \circ}$.
By construction of
$\widetilde{\bL}(\Phi_1)$
these coordinates glue to define a standard neighborhood
\begin{align*}
\eta_F
\colon
\Nbd_{\del_\infty \widetilde{\bfW}(\Phi_1)}(\cL_F)
\hookrightarrow
J^1 \cL_F.
\end{align*}
Then the factorization of
$\left( \bigsqcup^4_{i=1} \del_\infty \bL(\Sigma_{P_i}) \right)
\#
\left( \bigsqcup_{1 \leq i < j \leq 4} \del_\infty (\bL(\Sigma_{I_{ij}}) \times I_{ij, \circ}) \right)$
as
\begin{align*}
\left( \bigsqcup^4_{i=1} (\bL(\Sigma_{P_i} / \sigma^{P_i}_F) \times \del_\infty \sigma^{P_i}_F) \right)
\#
\left( \bigsqcup_{1 \leq i < j \leq 4} (\bL(\Sigma_{I_{ij}} / (\sigma^{P_i}_F / \langle \sigma^{P_i}_{I_{ij}} \rangle)) \times \del_\infty \sigma^{P_i}_F / \langle \sigma^{P_i}_{I_{ij}} \rangle \times I_{ij, \circ}) \right)
\cong
\bL(\Sigma_F) \times \del F_\circ
\end{align*}
follows from
\pref{eq:recursive}
for any strata
$\tau_{P_i} \in \Sigma_{P_i}$
with
$\sigma^{P_i}_F \subset \bar{\tau}_{P_i}$
mapping to the same cone in
$\Sigma_F$
under the quotient maps
$\Sigma_{P_i}
\to
\Sigma_{P_i} / \sigma^{P_i}_F$.
\end{proof}

We define
$\widetilde{\bL}(\Phi_2)$
as the extension through the handle
$T^* \widehat{M}_F \times T^* F_\circ
\cong
T^* F_\circ$
\begin{align*}
[\widetilde{\bL}(\Phi_1)] \#_{\cL_2} [\bL(\Sigma_F) \times F_\circ]
\cong
[\widetilde{\bL}(\Phi_1)] \#_{\cL_2} [F_\circ].
\end{align*}

\begin{lemma}
The Lagrangian
$\widetilde{\bL}(\Phi_2) \subset \widetilde{\bfW}(\Phi_2)$
is subanalytic conic
and
contains
$\Core(\widetilde{\bfW}(\Phi_2))$.
\end{lemma}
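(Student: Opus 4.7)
The plan is to mimic the proof of \pref{lem:1-L-subanalytic} almost verbatim, with the single handle $T^*\widehat{M}_F \times T^*F_\circ \cong T^*F_\circ$ now playing the role that the disjoint union $\bigsqcup_{i<j}[T^*\widehat{M}_{I_{ij}} \times T^*I_{ij,\circ}]$ played in the passage from $\Phi_0$ to $\Phi_1$. The three substatements---conicity, containment of the skeleton, and subanalyticity---are essentially independent and can be treated in sequence.

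For conicity I would first appeal to \pref{lem:2-biconicity} to see that $\widetilde{\bL}(\Phi_1)$ is biconic along $\cL_2$ with the required product factorization, and then apply \pref{lem:biconicity-criterion} (in the form used for \pref{cor:biconicity-criterion}) to the flat piece $\bL(\Sigma_F) \times F_\circ \cong F_\circ$ inside $T^*\widehat{M}_F \times T^*F_\circ \cong T^*F_\circ$: since $\bL(\Sigma_F)$ is just the zero-dimensional fan $\{0\}$ viewed inside the trivial torus $\widehat{M}_F$, the second factor is automatically conic for the cotangent scaling, so the biconicity criterion gives biconicity along $\cL_2$. By construction of $\#_{\cL_2}$, biconicity of both sides is precisely what guarantees that the glued Lagrangian $\widetilde{\bL}(\Phi_2) = [\widetilde{\bL}(\Phi_1)] \#_{\cL_2} [F_\circ]$ is conic in $\widetilde{\bfW}(\Phi_2)$.

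For the skeleton containment I would isolate the part of $\Core(\widetilde{\bfW}(\Phi_2))$ that is newly produced by the handle attachment: by definition of the handle attachment, it is the saturation under the (modified) Liouville flow of the zero locus of the Liouville vector field on
\begin{align*}
\eta_2(\Nbd_{\del_\infty \widetilde{\bfW}(\Phi_1)}(\cL_F))
\#_{\cL_F}
\Nbd_{\del[T^*\widehat{M}_F \times T^*F_\circ]}(\cL_F),
\end{align*}
which lies in the extension of $\widetilde{\bL}(\Phi_1)$ through the handle, hence in $\widetilde{\bL}(\Phi_2)$. The remaining part of $\Core(\widetilde{\bfW}(\Phi_2))$ is inherited from $\Core(\widetilde{\bfW}(\Phi_1))$, which by \pref{lem:1-L-subanalytic} is already contained in $\widetilde{\bL}(\Phi_1) \subset \widetilde{\bL}(\Phi_2)$.

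Finally, subanalyticity reduces to checking that each ingredient and the gluing operation preserve the subanalytic class. The extension $\widetilde{\bL}(\Phi_1)$ is subanalytic by \pref{lem:1-L-subanalytic}, the added piece is (locally) a product of a real torus with a semi-algebraic subset of a Euclidean space and is therefore subanalytic, and the gluing takes place in the subanalytic Weinstein manifold $\widetilde{\bfW}(\Phi_2)$ where finite unions of subanalytic subsets remain subanalytic. The only non-routine point is verifying that the modification of the Liouville form near $\cL_2$ used in the handle attachment stays within the subanalytic category; this I expect to be the main (minor) technical obstacle, but it is handled exactly as in \pref{lem:1-W-subanalytic} since the modification is local and canonical up to contractible choice.
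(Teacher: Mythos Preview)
Your proposal is correct and follows essentially the same approach as the paper: conicity via \pref{lem:2-biconicity} and \pref{lem:biconicity-criterion} applied to both pieces, with the remaining claims handled exactly as in \pref{lem:1-L-subanalytic}. The paper's proof is terser---it simply states that the proof of \pref{lem:1-L-subanalytic} carries over for the skeleton containment and subanalyticity---but your explicit unpacking of those steps matches what that phrase abbreviates.
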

\begin{proof}
Since by
\pref{lem:2-biconicity}
and
\pref{lem:biconicity-criterion}
the Lagrangians
$\widetilde{\bL}(\Phi_1)
\subset
\widetilde{\bfW}(\Phi_1)$
and
$[F_\circ]
\cong
[\bL(\Sigma_F) \times F_\circ]
\subset
[T^* \widehat{M}_F \times T^* F_\circ]
\cong
[T^* F_\circ]$
are biconic along
$\cL_2$,
the gluing
$\widetilde{\bL}(\Phi_2)$
remains conic.
For the rest of the claim,
the proof of
\pref{lem:1-L-subanalytic}
carries over.
\end{proof}

Let
$\pi_2 \colon \widetilde{\bL}(\Phi_2) \to \Phi_2$
be the map induced by
$\pi_1$
and
the projection
$T^* F_\circ \to F_\circ$
to the base.

\begin{lemma} \label{lem:2-original}
The triple
$(\widetilde{\bfW}(\Phi_2), \widetilde{\bL}(\Phi_2), \pi_2)$
satisfies the conditions
(1), \ldots, (4).
\end{lemma}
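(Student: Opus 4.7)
The plan is to follow the same template used in the proofs of \pref{lem:0-original} and \pref{lem:1-original}, with the substantive new content concentrated in condition (2), which becomes nontrivial for the first time because $\Phi$ is closed. For condition (1), since the only new stratum introduced at this step is $F$, it suffices to verify the three bullets at $S = F$ and to check that the glued structure is compatible with the bullets already verified for strata of $\Phi_1$. By construction of $\widetilde{\bL}(\Phi_2)$ as the extension through the handle $T^*\widehat{M}_F \times T^* F_\circ \cong T^* F_\circ$, the preimage $\pi_2^{-1}(F)$ is the zero section $F_\circ$, which matches $T^0 \times F$ since $\Sigma_F = \{0\}$ has trivial ambient lattice. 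Using \pref{lem:2-biconicity}, the local factorization identifies $\pi_2^{-1}(\Nbd(F))$ with $\bL(\Sigma_F) \times F$, and the identity symplectomorphism on the handle, glued with \pref{eq:1-symplectomorphism} via \pref{lem:2-biconicity}, supplies the required chart \pref{eq:charts}.

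For condition (2), note that Example \pref{eg:closed} verifies that $\Phi$ is closed, so equality $\widetilde{\bL}(\Phi_2) = \Core(\widetilde{\bfW}(\Phi_2))$ must be shown rather than mere inclusion. The inclusion $\Core(\widetilde{\bfW}(\Phi_2)) \subset \widetilde{\bL}(\Phi_2)$ was already noted in the subanalyticity lemma for $\widetilde{\bL}(\Phi_2)$. For the reverse inclusion, the handle $T^*\widehat{M}_F \times T^* F_\circ \cong T^* F_\circ$ attached at this step has skeleton exactly $F_\circ$, which coincides with the slice $\bL(\Sigma_F) \times F_\circ$ glued into $\widetilde{\bL}(\Phi_2)$ at this step. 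For the portion inherited from $\widetilde{\bL}(\Phi_1)$, use that every Liouville trajectory starting in $\widetilde{\bL}(\Phi_1)$ remains trapped after the final handle attachment because the attaching Legendrian $\cL_2$ is contained in $\del_\infty \widetilde{\bL}(\Phi_1)$ and the modified Liouville flow on the new handle preserves the biconic structure from \pref{lem:2-biconicity}. Hence every point of $\widetilde{\bL}(\Phi_2)$ is asymptotic under negative Liouville flow to the critical locus, giving $\widetilde{\bL}(\Phi_2) \subset \Core(\widetilde{\bfW}(\Phi_2))$.

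Conditions (3) and (4) follow the same scheme as in \pref{lem:1-original}. For (3), it suffices to treat the strict subfanifold $\Phi^\prime_2 = \Phi_1$; by Remark \pref{rmk:sector} and \pref{dfn:gluing}, the completion of $[\widetilde{\bfW}(\Phi_1)]$ along the modified Liouville flow sits inside $\widetilde{\bfW}(\Phi_2)$ as a stopped Weinstein sector, and intersecting with $\widetilde{\bL}(\Phi_2)$ recovers $\widetilde{\bL}(\Phi_1)$ by the very definition of the gluing $\#_{\cL_2}$. The case $\Phi^\prime_2 = \Phi_0$ is then obtained by composing with the sectorial inclusion established in the proof of \pref{lem:1-original}(3). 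For (4), extend the Lagrangian foliation constructed in the proof of \pref{lem:1-original}(4) across the handle using the tautological fibration of $T^* F_\circ$ by cotangent fibers $T^*_y F_\circ$; the compatibility along the gluing region follows from \pref{lem:2-biconicity} together with the commuting diagram in \pref{dfn:fanifold}, which ensures that the quotient maps of normal bundles intertwine with the leaves of the foliation.

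The main obstacle is condition (2): one must argue that no stable manifold of the completed Weinstein structure escapes either through the newly attached handle or through the interface with $\widetilde{\bfW}(\Phi_1)$. This requires carefully tracking how the modifications of the Liouville form from \pref{lem:modify}, applied near $\cL_2$, interact with the already-modified flow from the previous handle attachment, so that the skeleton of the glued manifold decomposes exactly as $\Core(\widetilde{\bfW}(\Phi_1), \cL_2) \cup_{\cL_2} F_\circ$ and this union is precisely $\widetilde{\bL}(\Phi_2)$.
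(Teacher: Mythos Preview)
Your proposal follows essentially the same template as the paper's proof: for (1) you reduce to the new stratum $F$ and invoke \pref{lem:2-biconicity}, for (3) you treat $\Phi^\prime_2 = \Phi_1$ via Remark~\pref{rmk:sector}, and for (4) you extend the polarization across the handle by the tautological foliation of $T^* F_\circ$. These all match the paper's argument.

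There is one genuine slip in your treatment of (2). You write that trajectories in $\widetilde{\bL}(\Phi_1)$ remain trapped ``because the attaching Legendrian $\cL_2$ is contained in $\del_\infty \widetilde{\bL}(\Phi_1)$''. This inclusion is automatic from the definition $\cL_2 = \pi_1^{-1}(F) \cap \del_\infty \widetilde{\bL}(\Phi_1)$, but it is the \emph{reverse} inclusion that you need: a trajectory escaping $\widetilde{\bL}(\Phi_1)$ hits $\del_\infty \widetilde{\bL}(\Phi_1)$, and for it to be captured by the handle you need that end to lie in $\cL_2$, i.e.\ $\del_\infty \widetilde{\bL}(\Phi_1) \subset \cL_2$. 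Equivalently, in the decomposition you correctly identify in your final paragraph, the equality $\Core(\widetilde{\bfW}(\Phi_1), \cL_2) = \widetilde{\bL}(\Phi_1)$ requires $\del_\infty \widetilde{\bL}(\Phi_1) = \cL_2$. The paper secures this by observing that, due to the absence of higher dimensional strata, every noncompact conic end of $\widetilde{\bL}(\Phi_1)$ lies over $F$ under $\pi_1$; combined with $F$ being interior this gives the needed equality. You should replace the stated inclusion with this observation.
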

\begin{proof}
(1)
The only top dimensional stratum
$F$
can be written as
\begin{align*}
\left( \left( \bigsqcup^4_{i=1} S_{P_i} \right)
\#_{\bigsqcup_{1 \leq i < j \leq 4} (\sigma_{P_i} / \langle \sigma^{P_i}_{I_{ij}} \rangle \times \del I_{ij, \circ})}
\left( \bigsqcup_{1 \leq i < j \leq 4} \sigma_{P_i} / \langle \sigma^{P_i}_{I_{ij}} \rangle \times I_{ij, \circ} \right) \right)
\#_{\sigma_{P_1} / \langle \sigma^{P_1}_F \rangle \times \del F_\circ}
(\sigma_{P_1} / \langle \sigma^{P_1}_F \rangle \times F_\circ),
\end{align*}
where
$S_{P_i} \subset \Phi_0, 1 \leq i \leq 4$
are the strata of codimension
$0$
corresponding to cones
$\sigma_{P_i} \in \Sigma_{P_i}$
via
$\phi_{P_i}$.
Then by definition of
$\pi_2$
and
\pref{lem:2-biconicity}
the inverse image
$\pi^{-1}_2(F)$
equals
\begin{align*}
\left( [\bigsqcup^4_{i=1} \bL_{\sigma_{P_i}}]
\#_{\bigsqcup_{1 \leq i < j \leq 4} ( \bL_{\sigma_{P_i} / \sigma^{P_i}_{I_{ij}}} \times \del I_{ij, \circ} )}
[\bigsqcup_{1 \leq i < j \leq 4} \bL_{\sigma_{P_i} / \sigma^{P_i}_{I_{ij}}} \times I_{ij, \circ}] \right)
\#_{\bL_{\sigma_{P_1} / \sigma^{P_1}_F} \times \del F_\circ}
[\bL_{\sigma_{P_1} / \sigma^{P_1}_F} \times F_\circ]
\cong
F.
\end{align*}
Since
$F$
is top dimensional,
$\pi^{-1}_2(\Nbd(F))$
coincides with
$\pi^{-1}_2(F) \cong F \cong \bL(\Sigma_F) \times F$.
For the other strata
$S \subset \Phi_2$
of positive codimension,
the claims are obvious
as we have
$\pi^{-1}_2(S)
=
\pi^{-1}_1(S)$.

Consider the symplectomorphism
\begin{align} \label{eq:2-symplectomorphism}
T^* T^d \times T^* S
\hookrightarrow 
\left( [\bigsqcup^4_{i=1} T^* \widehat{M}_{P_i}]
\#_{\bigsqcup_{1 \leq i < j \leq 4} \cL_{I_{ij}}}
[\bigsqcup_{1 \leq i < j \leq 4} T^* \widehat{M}_{I_{ij}} \times T^* {I_{ij, \circ}}] \right)
\#_{\cL_F}
[T^* F_\circ]
\end{align}
induced by
the symplectomorphisms
\pref{eq:1-symplectopieces}
and
the identity on
$T^* I_\circ, T^* F_\circ$.
Since it restricts to
such isomorphisms as
\pref{eq:1-restriction}
and
$\Sigma_F$
is trivial,
the symplectomorphism
\pref{eq:2-symplectomorphism}
embeds
$\bL(\Sigma_S) \times S$
into
$\widetilde{\bL}(\Phi_2)$.

(2)
Here
$\Phi_2$
is closed.
The saturation of the zero set of the Liouville vector field on
\begin{align*}
\eta_2(\Nbd_{\del_\infty \widetilde{\bfW}(\Phi_1)}(\cL_2))
\#_{\cL_2}
\Nbd_{\del [T^* F_\circ]}(\cL_F)
\subset
[\widetilde{\bfW}(\Phi_1)] \#_{\cL_2} [T^* F_\circ]
\end{align*}
gives the newly formed part of
$\Core(\widetilde{\bfW}(\Phi_2))$
by the handle attachment.
Due to the absence of higher dimensional strata,
it projects onto
$F$
under
$\pi_2$.
Since
$F$ is interior,
by
\pref{lem:1-original}(2)
its union with
$\widetilde{\bL}(\Phi_1)$
coincides with
$\widetilde{\bL}(\Phi_2)$.

(3)
It suffices to show the claim for
$\Phi^\prime_2 = \Phi_1$.
Then by
definition
and
Remark
\pref{rmk:sector}
\begin{align*}
\widetilde{\bfW}(\Phi^\prime_2)
=
[\bigsqcup^4_{i=1} T^* \widehat{M}_{P_i}]
\#_{\bigsqcup_{1 \leq i < j \leq 4} \cL_{I_{ij}}}
[\bigsqcup_{1 \leq i < j \leq 4} T^* \widehat{M}_{I_{ij}} \times T^* I_{ij, \circ}]
\end{align*}
determines a stopped Weinstein sector in
$\widetilde{\bfW}(\Phi_2)$
with relative skeleton
$\widetilde{\bL}(\Phi^\prime_2)
=
\widetilde{\bfW}(\Phi^\prime_2) \cap \widetilde{\bL}(\Phi_2)$.
Here,
one obtains
$\widetilde{\bfW}(\Phi^\prime_2)$
by completing the gluing of
$[\bigsqcup^4_{i=1} T^* \widehat{M}_{P_i}]$
with
$[\bigsqcup_{1 \leq i < j \leq 4} T^* \widehat{M}_{I_{ij}} \times T^* I_{ij, \circ}]$
along the modified Liouville flow.

(4)
Consider the Lagrangian foliation of
$T^* T^d \times T^* S$
with leaves 
\begin{align*}
(T^*_\theta T^d \times \{ y \})
\times
(\{ \theta \} \times T^*_y S), \
\theta \in T^d, \
y \in S.
\end{align*}
Via the symplectomorphism
$T^* T^d \times T^* S \cong T^* T^d \times T^* S$
induced by
\pref{eq:2-symplectomorphism},
the leaf space get identified with the zero section,
which in turn is isomorphic to
$\pi^{-1}_2(S)$.
For the rest,
the proof of
\pref{lem:0-original}(4)
carries over.
\end{proof}



\subsection{General case}
Suppose that
\pref{thm:fibration}
and
the relevant results hold for the subfanifold
$\Phi_{k-1}$.
Let
$\cL_k$
be the disjoint union of
\begin{align*}
\cL^k_ {S^{(l)}}
=
\pi^{-1}_{k-1}(S^{(l)}) \cap \del_\infty \widetilde{\bL}(\Phi_{k-1})
\end{align*}
for all interior $l$-strata
$S^{(l)} \subset \Phi, k \leq l$.

\begin{lemma}
There are smooth Legendrian embeddings
\begin{align*}
\del_\infty \widetilde{\bfW}(\Phi_{k-1})
\hookleftarrow
\cL^k_ {S^{(l)}}
\hookrightarrow
\del \bigsqcup_{S^{(k)}} (T^* \widehat{M}_{S^{(k)}} \times T^* S^{(k)}_\circ). 
\end{align*}
\end{lemma}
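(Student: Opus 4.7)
The plan is to follow the template of \pref{lem:SLE1}, which handled the case $k = 1$ for the special fanifold of \pref{eg:closed}, and extend it to arbitrary $k$ using the inductive hypothesis on $\Phi_{k-1}$.

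First, the left embedding is essentially tautological: by definition $\cL^k_{S^{(l)}} = \pi^{-1}_{k-1}(S^{(l)}) \cap \del_\infty \widetilde{\bL}(\Phi_{k-1}) \subset \del_\infty \widetilde{\bL}(\Phi_{k-1})$, and $\del_\infty \widetilde{\bL}(\Phi_{k-1})$ is Legendrian in $\del_\infty \widetilde{\bfW}(\Phi_{k-1})$ by the subanalytic conic Lagrangian property of $\widetilde{\bL}(\Phi_{k-1})$ assumed inductively.

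For the right embedding, I would decompose $\cL^k_{S^{(l)}}$ according to the strata $S'$ of $\Phi_{k-1}$ that meet $\overline{S^{(l)}_\circ}$ via the filtration of \pref{lem:filtration}. By condition (1) of \pref{thm:fibration} applied inductively, for each such $S'$ the local chart $T^* \widehat{M}_{S'} \times T^* S' \hookrightarrow \widetilde{\bfW}(\Phi_{k-1})$ identifies $\pi^{-1}_{k-1}(S^{(l)}) \cap \Nbd(S')$ with $\bL_{\sigma^{S'}_{S^{(l)}}} \times (S' \cap \overline{S^{(l)}_\circ})$, where $\sigma^{S'}_{S^{(l)}} \in \Sigma_{S'}$ is the cone assigned to $S^{(l)}$ by the functor $\Exit(\Phi) \to \Fan^\twoheadrightarrow$. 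Passing to the ideal boundary in the $T^*\widehat{M}_{S'}$ factor, this piece of $\cL^k_{S^{(l)}}$ is identified through the lattice quotient $M_{S',\bR} \to M_{S^{(k)},\bR}$—valid for each interior $k$-stratum $S^{(k)}$ having $S'$ in its closure—with a Legendrian in $\del(T^*\widehat{M}_{S^{(k)}} \times T^* S^{(k)}_\circ)$. When $l = k$, the cone $\sigma^{S'}_{S^{(k)}}$ quotients to the origin of $\Sigma_{S^{(k)}}$, so $\bL_{\sigma^{S'}_{S^{(k)}}}$ reduces to the zero section $\widehat{M}_{S^{(k)}}$, recovering the product $\widehat{M}_{S^{(k)}} \times \del S^{(k)}_\circ$ exactly as in the $\cL_{I_{12}}$ analysis of \pref{lem:SLE1}. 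When $l > k$, $\del_\infty \bL_{\sigma^{S'}_{S^{(l)}}}$ descends to a nonzero-cone Legendrian in $\del_\infty T^*\widehat{M}_{S^{(k)}}$ paired with the relevant portion of $\del S^{(k)}_\circ$, mirroring the treatment of $\cL^1_F$.

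The main obstacle will be checking that these local Legendrian pieces assemble coherently as $S'$ varies through the strata of $\Phi_{k-1}$ inside $\overline{S^{(l)}_\circ}$. This reduces to the commutative square in Definition \pref{dfn:fanifold} relating the trivializations $\phi_{S'}$ of nested strata with the lattice quotients $M_{S',\bR} \to M_{S^{(k)},\bR}$—precisely the compatibility which propagated the identifications \pref{eq:0-decomposition} and \pref{eq:1-decomposition} through the inductive gluings in \pref{lem:1-original} and \pref{lem:2-original}. Once this compatibility is invoked, the smoothness and Legendrian property of the global embedding follow as in \pref{lem:SLE1}, completing the induction.
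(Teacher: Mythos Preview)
Your proposal is correct and follows essentially the same approach as the paper: both use the inductive hypothesis on \pref{thm:fibration}(1) to decompose $\pi^{-1}_{k-1}(S^{(l)})$ over the strata of $\Phi_{k-1}$ lying in $\del S^{(l)}_\circ$, then identify the pieces via the lattice quotients $M_{S', \bR} \to M_{S^{(k)}, \bR}$ from Definition~\ref{dfn:fanifold}. The paper is slightly terser---it invokes the second bullet of \pref{thm:fibration}(1) directly to obtain the global identification $\cL_{S^{(k)}} \cong \widehat{M}_{S^{(k)}} \times \del S^{(k)}_\circ$ in one step and only treats $l = k$ explicitly, deferring $l > k$ to the analogy with \pref{lem:SLE1}---whereas you spell out the local-chart assembly and sketch both cases; but the content is the same.
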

\begin{proof}
Here,
we show the claim only for
$\cL_{S^{(k)}}$.
The other cases can be discussed in a similar way to the proof of
\pref{lem:SLE1}.
Since
$\pi^{-1}_{k-1}(S^{(k)})$
is the gluing of
\begin{align*} 
\bL_{\sigma^P_{S^{(k)}}}, \
\bL_{\sigma^P_{S^{(k)}} / \sigma^P_I} \times I_\circ, \
\bL_{\sigma^P_{S^{(k)}} / \sigma^P_F} \times F_\circ, \
\ldots
\end{align*}
by the second bullet of
\pref{thm:fibration}(1)
for
$n = k-1$,
there is a smooth Legendrian embedding
\begin{align*}
\cL_{S^{(k)}}
\cong
\widehat{M}_{S^{(k)}} \times \del S^{(k)}_\circ
\hookrightarrow
\del_\infty \widetilde{\bfW}(\Phi_{k-1}).
\end{align*}
Here,
$P, I, F, \ldots$
run through strata of dimension
$0, 1, 2, \ldots$
in
$\del S^{(k)}_\circ$
with
$P \subset \del_{in} I,
P, I \subset \del_{in} F,
\ldots$.
Note that
by Definition
\pref{dfn:fanifold}
$\sigma^P_{S^{(k)}} / \langle \sigma^P_I \rangle,
\sigma^P_{S^{(k)}} / \langle \sigma^P_F \rangle,
\ldots$
regarded as cones in
$\Sigma_I, \Sigma_F, \ldots$
do not depend on the choice of
$P$
and
we fix some
$P$
when such
$I, F, \ldots$
run. 
On the other hand,
$\widehat{M}_{S^{(k)}} \times \del S^{(k)}_\circ$
can be regarded as a smooth Legendrian in
\begin{align*}
T^* \widehat{M}_{S^{(k)}} \times T^* \del S^{(k)}_\circ
\subset
\del \bigsqcup_{S^{(k)}} (T^* \widehat{M}_{S^{(k)}} \times T^* S^{(k)}_\circ).
\end{align*}
\end{proof}

Provided
the chart
\pref{eq:charts}
for the previous step
and
\pref{lem:coordinates}(4),
we may define
$\widetilde{\bfW}(\Phi_k)$
as the handle attachment
\begin{align*}
[\widetilde{\bfW}(\Phi_{k-1})] \#_{\cL_k}  \bigsqcup_{S^{(k)}} [T^* \widehat{M}_{S^{(k)}} \times T^* S^{(k)}_\circ].
\end{align*}

\begin{lemma}
The manifold-with-boundary
$\widetilde{\bfW}(\Phi_k)$
is subanalytic Weinstein.
\end{lemma}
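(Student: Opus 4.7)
The plan is to mirror the proof of \pref{lem:1-W-subanalytic} word for word, replacing the base case with the inductive hypothesis. The statement has two independent contents packaged together, and I would handle them separately.

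For the Weinstein part, there is essentially nothing to do. By the inductive hypothesis, $\widetilde{\bfW}(\Phi_{k-1})$ is Weinstein, and each piece $T^* \widehat{M}_{S^{(k)}} \times T^* S^{(k)}_\circ$ is Weinstein by \pref{eg:Weinstein} (viewed as the product Weinstein structure after boundary modification via the stopped-sector convention, cf.\ the remark preceding \pref{dfn:extension}). The construction of $\widetilde{\bfW}(\Phi_k)$ is precisely a finite sequence of handle attachments in the sense of \pref{dfn:extension}(1), and the definition of gluing of Weinstein pairs in \pref{dfn:gluing} together with \pref{lem:modify} ensures that the result carries a canonical Weinstein structure with adjusted Liouville form and Morse--Bott function.

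For subanalyticity, I would invoke three standard facts about the category of subanalytic subsets of a real analytic manifold: it forms a Boolean algebra under finite unions, intersections, and complements; it is stable under taking products; and real analytic submanifolds (in particular cotangent bundles of real analytic manifolds, compactified cotangent bundles of manifolds-with-analytic-boundary, and tori) are subanalytic. The inductive hypothesis gives that $\widetilde{\bfW}(\Phi_{k-1})$ is subanalytic. Each $T^* \widehat{M}_{S^{(k)}} \times T^* S^{(k)}_\circ$ is subanalytic as a product of subanalytic sets. Since the gluing region $U_\epsilon(\cL_k)$ is subanalytic by construction (it is the image of a subanalytic neighborhood under the analytic coordinates $\eta_k$ of \pref{lem:coordinates}, coupled with the canonical coordinates on the base direction in $T^* S^{(k)}_\circ$), the union is subanalytic as a finite union.

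The only step requiring more than a reference is verifying that the Liouville-structure modification supplied by \pref{lem:modify} can be performed within the subanalytic category, so that the ``completion along the modified Liouville flow'' does not destroy subanalyticity of the underlying set. I would handle this in the same spirit as \pref{lem:1-W-subanalytic}: the modification is local near $\cL_k$, affects only the Liouville form (not the ambient manifold), and can be chosen to agree with the original form outside a subanalytic neighborhood by \pref{lem:modify}; the underlying manifold-with-boundary as a set is unaffected. Hence the desired conclusion follows, and this is the step I expect to phrase most carefully, although no genuine obstacle arises.
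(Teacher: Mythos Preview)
Your proposal is correct and follows essentially the same approach as the paper, which simply states that the proof of \pref{lem:1-W-subanalytic} carries over. You have expanded considerably on what the paper leaves implicit (particularly the discussion of subanalyticity of the gluing region and the compatibility of the Liouville modification), but the core argument---Weinstein from handle attachment, subanalytic from closure under products and finite unions---is identical.
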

\begin{proof}
The proof of
\pref{lem:1-W-subanalytic}
carries over.
\end{proof}

\begin{lemma} \label{lem:k-biconicity}
There is a standard neighborhood
\begin{align} \label{eq:stdNbdk}
\eta_k
\colon
\Nbd_{\del_\infty \widetilde{\bfW}(\Phi_{k-1})}(\cL_k)
\hookrightarrow
J^1 \cL_k
\end{align}
near
$\cL_k$
for which
$\widetilde{\bL}(\Phi_{k-1})$
is biconic along
$\cL_k$
and
$\del_\infty \widetilde{\bL}(\Phi_{k-1})$
locally factors as
\begin{align*}
\eta_k(\widetilde{\bL}(\Phi_{k-1})
\cap
\Nbd_{\del_\infty \widetilde{\bfW}(\Phi_{k-1})}(\cL_k))
=
\bigsqcup_{S^{(k)}}
(\bL(\Sigma_{S^{(k)}}) \times \del S^{(k)}_\circ).
\end{align*}
\end{lemma}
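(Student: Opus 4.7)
The plan is to mimic the proofs of \pref{lem:1-biconicity} and \pref{lem:2-biconicity}, treating one interior $k$-stratum $S^{(k)} \subset \Phi$ at a time and then taking the disjoint union. Near $\cL^k_{S^{(k)}}$, the Lagrangian $\widetilde{\bL}(\Phi_{k-1})$ is built, by the inductive hypothesis, as a gluing of pieces $\bL(\Sigma_{S'}/\sigma^{S'}_{S^{(k)}}) \times S'_\circ$ indexed by the strata $S'$ of the filtration $\Phi_0 \subset \cdots \subset \Phi_{k-1}$ that lie in the closure of $S^{(k)}$; here $\sigma^{S'}_{S^{(k)}} \in \Sigma_{S'}$ denotes the cone corresponding to $S^{(k)}$ via the fanifold structure. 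The first step is to spell this decomposition out using the second bullet of \pref{thm:fibration}(1) applied to $\Phi_{k-1}$, together with the commutative diagram of \pref{dfn:fanifold} that identifies the various cones $\sigma^{S'}_{S^{(k)}}$ across the quotients $M_{S', \bR} \to M_{S^{(k)}, \bR}$.

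On each piece, sitting inside a handle of the form $T^* \widehat{M}_{S'} \times T^* S'_\circ$, I would take the product of the local coordinates $\eta_{\sigma^{S'}_{S^{(k)}}}$ from \pref{lem:coordinates} near $\del_\infty \bL_{\sigma^{S'}_{S^{(k)}}}$ with the canonical coordinates on $T^* S'_\circ$. The key point is that \pref{lem:coordinates}(4) is exactly the compatibility statement needed to glue these products across the Weinstein handle attachments already performed to build $\widetilde{\bfW}(\Phi_{k-1})$: whenever $S' \subset \bar{S}''$ both border $S^{(k)}$, the nesting $R_{\sigma^{S''}_{S^{(k)}}} \cap \Nbd(\del_\infty \bL_{\sigma^{S'}_{S^{(k)}}}) \subset R_{\sigma^{S'}_{S^{(k)}}}$, tensored with the identity on the base directions of $T^* S'_\circ$, yields agreement on overlaps. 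Assembling the local pieces produces the desired standard neighborhood $\eta_k$, and biconicity of $\widetilde{\bL}(\Phi_{k-1})$ along $\cL_k$ then follows factor by factor from \pref{lem:coordinates}(1) together with \pref{lem:biconicity-criterion}.

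The factorization statement is then a direct application of \pref{eq:recursive}: for each $S' \subset \bar{S^{(k)}}$, the piece $\bL(\Sigma_{S'}/\sigma^{S'}_{S^{(k)}}) \times S'_\circ$ is carried by $\eta_k$ to $\bL(\Sigma_{S^{(k)}}) \times \del_\infty(\sigma^{S'}_{S^{(k)}}/\langle\cdot\rangle) \times S'_\circ$, and gluing these over the filters of $\del S^{(k)}_\circ$ reproduces $\bL(\Sigma_{S^{(k)}}) \times \del S^{(k)}_\circ$; \pref{eg:hypersurface} guarantees that the fanifold structure induced on $\del S^{(k)}_\circ$ matches the one inherited from $\Phi$. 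Taking the disjoint union over all interior $k$-strata $S^{(k)}$ yields the stated factorization.

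The main obstacle, as in the low-dimensional cases, is the bookkeeping of compatibility across the filtration: for every chain $S' \subset \bar{S}'' \subset \cdots \subset \bar{S^{(k)}}$, the local coordinates chosen piecewise must agree on overlaps, and the biconic gluings recorded at previous inductive steps must assemble into the advertised product structure. All the relevant data is packaged in the commutative diagram of \pref{dfn:fanifold}, the recursion \pref{eq:recursive}, and the nesting property of \pref{lem:coordinates}(4); once these are stacked inductively along $\Phi_0 \subset \cdots \subset \Phi_{k-1}$, the arguments of \pref{lem:1-biconicity} and \pref{lem:2-biconicity} go through essentially verbatim.
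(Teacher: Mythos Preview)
Your proposal is correct and follows essentially the same route as the paper: for each $S^{(k)}$, take products of the local coordinates $\eta_{\sigma^{S'}_{S^{(k)}}}$ from \pref{lem:coordinates} with canonical coordinates on $T^* S'_\circ$, glue these using \pref{lem:coordinates}(4), and read off the factorization from the recursion \pref{eq:recursive}. One minor imprecision: the pieces of $\widetilde{\bL}(\Phi_{k-1})$ before applying $\eta_k$ are the full $\bL(\Sigma_{S'}) \times S'_\circ$, not $\bL(\Sigma_{S'}/\sigma^{S'}_{S^{(k)}}) \times S'_\circ$ as you write --- the latter is what you get after applying the local coordinates and invoking \pref{eq:recursive}, so your first paragraph conflates input and output; also keep in mind that $\cL_k$ includes the pieces $\cL^k_{S^{(l)}}$ for all $l \geq k$, though the paper likewise treats only $l=k$ in detail and defers the rest to \pref{lem:coordinates}(4).
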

\begin{proof}
Here,
we show the claim only for
$\cL_{S^{(k)}}$
to obtain a standard neighborhood
\begin{align*}
\eta_{S^{(k)}}
\colon
\Nbd_{\del_\infty \widetilde{\bfW}(\Phi_{k-1})}(\cL_{S^{(k)}})
\hookrightarrow
J^1 \cL_{S^{(k)}}.
\end{align*}
Due to
\pref{lem:coordinates}(4),
such standard neighborhoods for all
$S^{(l)}, k \leq l$
glue to yield
$\eta_k$.
Along
\begin{align*}
\del_\infty \bL_{\sigma^P_{S^{(k)}}}, \
\del_\infty \bL_{\sigma^P_{S^{(k)}} / \sigma^P_I}  \times I_\circ, \
\del_\infty \bL_{\sigma^P_{S^{(k)}} / \sigma^P_F}  \times F_\circ, \
\ldots
\end{align*}
where
$P, I, F, \ldots$
run through strata of dimension
$0, 1, 2, \ldots$
in
$\del S_\circ$
with
$P \subset \del_{in} I,
P, I \subset \del_{in} F,
\ldots$
as above,
\begin{align} \label{eq:k-Lagrangians}
\bL(\Sigma_P), \
\bL(\Sigma_I) \times I_\circ, \
\bL(\Sigma_F) \times F_\circ, \
\ldots
\end{align}
are biconic for the products
\begin{align*}
\eta_{\sigma^P_{S^{(k)}}}, \
\eta_{\sigma^P_{S^{(k)}} / \sigma^P_I} \times \can_I, \
\eta_{\sigma^P_{S^{(k)}} / \sigma^P_F} \times \can_F, \
\ldots
\end{align*}
of the standard coordinates from
\pref{lem:coordinates}
and
the canonical coordinates
$\can_I, \can_F, \ldots$
on
$T^* I_\circ, T^* F_\circ, \ldots$.
By construction of
$\widetilde{\bL}(\Phi_{k-1})$
these coordinates glue to define
$\eta_{S^{(k)}}$.
Then the factorization of the gluing of
\pref{eq:k-Lagrangians}
as the gluing of
\begin{align*}
\bL(\Sigma_P / \sigma^P_{S^{(k)}}), \
\bL(\Sigma_I / (\sigma^P_{S^{(k)}} / \langle \sigma^P_I \rangle)) \times \sigma^P_{S^{(k)}} / \langle \sigma^P_I \rangle \times I_\circ, \
\bL(\Sigma_F / (\sigma^P_{S^{(k)}} / \langle \sigma^P_F \rangle)) \times \sigma^P_{S^{(k)}} / \langle \sigma^P_F \rangle \times F_\circ, \
\ldots
\end{align*}
follows from
\pref{eq:recursive}
for any strata
$\tau_P \in \Sigma_P,
\tau_P / \langle \sigma^P_I \rangle \in \Sigma_I,
\tau_P / \langle \sigma^P_F \rangle \in \Sigma_F,
\ldots$
with
$\sigma^P_{S^{(k)}} \subset \bar{\tau}_P$
mapping to the same cone in
$\Sigma_{S^{(k)}}$
under the quotient maps
\begin{align*}
\Sigma_P
\to
\Sigma_P / \sigma^P_{S^{(k)}}, \
\Sigma_I
\to
\Sigma_I / (\sigma^P_{S^{(k)}} / \langle \sigma^P_I \rangle), \
\Sigma_F
\to
\Sigma_F / (\sigma^P_{S^{(k)}} / \langle \sigma^P_F \rangle), \
\ldots.
\end{align*}
\end{proof}

We define
$\widetilde{\bL}(\Phi_k)$
as the extension through the disjoint union of the handles
$T^* \widehat{M}_{S^{(k)}} \times T^* S^{(k)}_\circ$
\begin{align*}
[\widetilde{\bL}(\Phi_{k-1})] \#_{\cL_k}  \bigsqcup_{S^{(k)}} [\bL(\Sigma_{S^{(k)}}) \times S^{(k)}_\circ].
\end{align*}

\begin{lemma}
The Lagrangian
$\widetilde{\bL}(\Phi_k) \subset \widetilde{\bfW}(\Phi_k)$
is subanalytic conic
and
contains
$\Core(\widetilde{\bfW}(\Phi_k))$.
\end{lemma}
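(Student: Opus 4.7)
The plan is to mimic the proofs of \pref{lem:1-L-subanalytic} and its $k=2$ analog, which already contain all the necessary ingredients; the general step differs only in that several handles may be attached simultaneously along interior strata of various dimensions $l \geq k$.

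First I would verify conicity. By \pref{lem:k-biconicity}, $\widetilde{\bL}(\Phi_{k-1})$ is biconic along $\cL_k$. On the other side of the gluing, each piece $\bL(\Sigma_{S^{(k)}}) \times S^{(k)}_\circ$ sits inside $T^* \widehat{M}_{S^{(k)}} \times T^* S^{(k)}_\circ$ as a union of products of the form $G_\sigma \times \sigma \times S^{(k)}_\circ$, so that \pref{lem:biconicity-criterion} applies (with $Y = \widehat{M}_{S^{(k)}} \times \del S^{(k)}_\circ$ inside an appropriate ambient $Y'$) to give biconicity along $\cL^k_{S^{(k)}} \subset \cL_k$. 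Since biconic Lagrangians remain conic in the glued Weinstein manifold-with-boundary, the extension $\widetilde{\bL}(\Phi_k)$ is conic.

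Next I would establish the skeleton containment. By induction $\Core(\widetilde{\bfW}(\Phi_{k-1})) \subset \widetilde{\bL}(\Phi_{k-1})$, and by Example \pref{eg:Weinstein} the skeleton of each handle $T^* \widehat{M}_{S^{(k)}} \times T^* S^{(k)}_\circ$ is contained in $\bL(\Sigma_{S^{(k)}}) \times S^{(k)}_\circ$. The only new piece of $\Core(\widetilde{\bfW}(\Phi_k))$ introduced by the handle attachment is the saturation under the modified Liouville flow of the zero set of the Liouville vector field on the glued collar neighborhoods around $\cL_k$; this is contained in the glued Lagrangian by construction of the gluing \pref{dfn:gluing}. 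Combining the three inclusions gives $\Core(\widetilde{\bfW}(\Phi_k)) \subset \widetilde{\bL}(\Phi_k)$.

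Finally, for subanalyticity, I would note that $\widetilde{\bL}(\Phi_k)$ is the union of $\widetilde{\bL}(\Phi_{k-1})$ (subanalytic by induction) and finitely many sets of the form $G_\sigma \times \sigma \times S^{(k)}_\circ$, each of which is the product of a real subtorus of $\widehat{M}_{S^{(k)}}$, an algebraic subset of $M_{S^{(k)}, \bR}$, and a smooth stratum $S^{(k)}_\circ$ of $\Phi$; since the strata are smooth submanifolds of the ambient real analytic manifold $\cM$ and the class of subanalytic sets forms a Boolean algebra closed under products, the conclusion follows.

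The main obstacle is coordinating the standard neighborhoods of the various components $\cL^k_{S^{(l)}}$ of $\cL_k$ for different $l \geq k$ so that biconicity of $\widetilde{\bL}(\Phi_{k-1})$ is simultaneous rather than merely local for each $S^{(l)}$; this is exactly what \pref{lem:coordinates}(4) was crafted to handle, and the proof of \pref{lem:k-biconicity} shows that the pieces glue coherently. Once this compatibility is in place, the remaining steps are direct adaptations of the $k=1, 2$ arguments.
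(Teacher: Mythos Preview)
Your proposal is correct and follows essentially the same approach as the paper: invoke \pref{lem:k-biconicity} and \pref{lem:biconicity-criterion} to see that both $\widetilde{\bL}(\Phi_{k-1})$ and the handle Lagrangians $\bigsqcup_{S^{(k)}}[\bL(\Sigma_{S^{(k)}})\times S^{(k)}_\circ]$ are biconic along $\cL_k$, hence their gluing stays conic, and then declare that the proofs of skeleton containment and subanalyticity carry over verbatim from \pref{lem:1-L-subanalytic}. The paper is even terser than you are, simply writing ``for the rest of the claim, the proof of \pref{lem:1-L-subanalytic} carries over,'' whereas you spell out the saturation-of-zero-set argument and the Boolean-algebra subanalyticity check; your added remark on why \pref{lem:coordinates}(4) is needed for simultaneous compatibility of the various $\cL^k_{S^{(l)}}$ is a helpful gloss that the paper leaves implicit.
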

\begin{proof}
Since by
\pref{lem:k-biconicity}
and
\pref{lem:biconicity-criterion}
the Lagrangians
\begin{align*}
\widetilde{\bL}(\Phi_{k-1}) \subset \widetilde{\bfW}(\Phi_{k-1}), \
\bigsqcup_{S^{(k)}} [\bL(\Sigma_{S^{(k)}}) \times S^{(k)}_\circ]
\subset
\bigsqcup_{S^{(k)}} [T^* \widehat{M}_{S^{(k)}} \times T^* S^{(k)}_\circ]
\end{align*}
are biconic along
$\cL_k$,
the gluing
$\widetilde{\bL}(\Phi_k)$
remains conic.
For the rest of the claim,
the proof of
\pref{lem:1-L-subanalytic}
carries over.
\end{proof}

Let
$\pi_k \colon \widetilde{\bL}(\Phi_k) \to \Phi_k$
be the map induced by
$\pi_{k-1}$
and
the projections from
$T^* \widehat{M}_{S^{(k)}} \times T^* S^{(k)}_\circ$
to
the cotangent fiber direction in
$T^* \widehat{M}_{S^{(k)}}$
and
the base direction in
$T^* S^{(k)}_\circ$.

\begin{lemma} \label{lem:k-original}
The triple
$(\widetilde{\bfW}(\Phi_k), \widetilde{\bL}(\Phi_k), \pi_k)$
satisfies the conditions
(1), \ldots, (4).
\end{lemma}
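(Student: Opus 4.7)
The plan is to carry out the inductive step by closely following the pattern set out in Lemmas \pref{lem:1-original} and \pref{lem:2-original}, separating each stratum $S \subset \Phi_k$ into pieces inherited from $\Phi_{k-1}$ and pieces formed by the new handle attachments along $\cL_k$. For condition (1), given $S$ of codimension $d$ in $\Phi_k$, I would first write it as a gluing of pieces coming from the filtration \pref{eq:filtration1}, and use Lemma \pref{lem:k-biconicity} to decompose $\pi_{k-1}^{-1}(S \cap \Phi_{k-1})$ near $\cL_k$ as a product over the boundaries $\del S^{(k)}_\circ$. Extending through the handles $T^*\widehat{M}_{S^{(k)}} \times T^* S^{(k)}_\circ$ then yields $\pi_k^{-1}(S) \cong T^d \times S$ and $\pi_k^{-1}(\Nbd(S)) \cong \bL(\Sigma_S) \times S$, by direct generalizations of \pref{eq:0-decomposition} and \pref{eq:1-decomposition} combined with the factorization \pref{eq:recursive}. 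The symplectic chart \pref{eq:charts} is then assembled as in \pref{eq:1-symplectomorphism} and \pref{eq:2-symplectomorphism}: on each inherited piece apply the model symplectomorphism \pref{eq:0-symplectomorphism}, on each new handle take its product with the identity on $T^* S^{(k)}_\circ$, and glue along the standard neighborhood \pref{eq:stdNbdk}, checking that the restrictions match as in \pref{eq:0-restriction} and \pref{eq:1-restriction}.

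For (2), the fanifold $\Phi_k$ is closed only when $k = n$ and every top-dimensional stratum is interior, in which case the argument of \pref{lem:2-original}(2) applies verbatim: the newly formed part of $\Core(\widetilde{\bfW}(\Phi_n))$ is the saturation of the zero set of the Liouville vector field on the final batch of handles, and since there are no further handles to attach, it projects under $\pi_n$ onto the interiors $S^{(n)}_\circ$, which coincide with $S^{(n)}$ by interiority; its union with $\widetilde{\bL}(\Phi_{n-1})$ therefore exhausts $\widetilde{\bL}(\Phi_n)$ by the inductive hypothesis. For (3), by induction and the transitivity of sectorial inclusions it suffices to treat $\Phi^\prime_k = \Phi_{k-1}$, where by construction and Remark \pref{rmk:sector} completing $[\widetilde{\bfW}(\Phi_{k-1})]$ along the modified Liouville flow yields the required stopped Weinstein sector with relative skeleton $\widetilde{\bfW}(\Phi^\prime_k) \cap \widetilde{\bL}(\Phi_k)$. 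For (4), the Lagrangian foliation of $T^*T^d \times T^*S$ by leaves $(T^*_\theta T^d \times \{y\}) \times (\{\theta\} \times T^*_y S)$ transports via \pref{eq:charts} to a local section of $\LGr(\widetilde{\bfW}(\Phi_k))$; its compatibility across inclusions $S \hookrightarrow \overline{S^\prime}$ in both base and fiber directions follows from the commutative square in Definition \pref{dfn:fanifold}, just as in \pref{lem:0-original}(4).

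The main obstacle will be the bookkeeping in (1): a general $k$-stratum of $\Phi_k$ can be glued from pieces coming from every interior $l$-stratum with $l \geq k$, so identifying $\pi_k^{-1}(\Nbd(S)) \cong \bL(\Sigma_S) \times S$ requires grouping the cones $\tau \in \Sigma_{S^{(l)}}$ contributing to the preimage according to their image in $\Sigma_S$ under the various quotient maps $M_{S^{(l)}} \to M_S$ from Definition \pref{dfn:fanifold}. The canonicity of these identifications — and hence the matching of the symplectic gluings with the combinatorial gluings on the fan side — is exactly what the commutative square in Definition \pref{dfn:fanifold} together with Lemma \pref{lem:k-biconicity} provides, but writing this out in full generality demands a careful indexing of the cones involved at each inductive level.
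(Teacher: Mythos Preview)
Your proposal is correct and follows essentially the same approach as the paper's proof: reduce to a single handle attachment, decompose a stratum $S$ into pieces indexed by the lower-dimensional strata $P, I, F, \ldots$ in $\del S^{(k)}_\circ$, apply \pref{lem:k-biconicity} and the recursive identification \pref{eq:recursive} to obtain the product descriptions in (1), assemble the chart from the local models \pref{eq:0-symplectomorphism}, and handle (2)--(4) exactly as you outline. The only minor slip is in your characterization for (2): closedness of $\Phi_k$ requires \emph{all} strata to be interior, not just the top-dimensional ones (this is the definition), and the paper phrases the hypothesis accordingly; your argument still goes through since you immediately invoke the inductive hypothesis, but be careful that the reference to ``Theorem~\ref{thm:fibration}(2) for $n=k-1$'' is really to the statement that the newly formed core together with $\widetilde{\bL}(\Phi_{k-1})$ recovers $\widetilde{\bL}(\Phi_k)$, not to the equality $\widetilde{\bL}(\Phi_{k-1}) = \Core(\widetilde{\bfW}(\Phi_{k-1}))$ (which would require $\Phi_{k-1}$ itself to be closed).
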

\begin{proof}
(1)
It suffices to show the claim
when attaching the handle
$T^* \widehat{M}_{S^{(k)}} \times T^* S^{(k)}_\circ$
to
$\widetilde{\bfW}(\Phi_{k-1})$
for a single interior $k$-stratum
$S^{(k)} \subset \Phi$
with
$k \leq \dim \Phi$.
Then by definition of
$\pi_k$
and
\pref{lem:k-biconicity}
the inverse image
$\pi^{-1}_k(S)$
is the gluing of
\begin{align*} 
\bL_{\sigma_P} \cong T^d \times \sigma_P, \
\bL_{\sigma_P / \sigma^P_I} \times I_\circ
\cong
T^d \times \sigma_P / \langle \sigma^P_I \rangle \times I_\circ, \
\bL_{\sigma_P / \sigma^P_F} \times F_\circ
\cong
T^d \times \sigma_P / \langle \sigma^P_F \rangle \times F_\circ, \
\ldots
\end{align*}
and
$\bL_{\sigma^P_{S^{(k)}}} \times S^{(k)}_\circ
\cong
T^d \times \sigma_P / \langle \sigma^P_{S^{(k)}} \rangle \times S^{(k)}_\circ$
where
$P, I, F, \ldots$
run through strata of dimension
$0, 1, 2, \ldots$
in
$\del S^{(k)}_\circ$
with
$P \subset \del_{in} I,
P, I \subset \del_{in} F,
\ldots$
as above.
Hence by
\pref{lem:filtration}
we obtain
$\pi^{-1}_k(S) \cong T^d \times S$.

From the isomorphisms
\pref{eq:0-decomposition},
\pref{eq:1-decomposition},
\ldots
and
\begin{align} \label{eq:k-decomposition}
\tau^\perp_P \times \tau_P
\cong
(\tau_P / \langle \sigma_P \rangle)^\perp
\times
\tau_P / \langle \sigma_P \rangle
\times
\sigma_P / \langle \sigma^P_{S^{(k)}} \rangle
\times
S^{(k)}
\end{align}
for any stratum
$\tau_P \in \Sigma_P$
with
$\sigma_P \subset \bar{\tau}_P$,
it follows that
$\pi^{-1}_k(S^\prime)$
for a stratum
$S^\prime$
of the induced stratification on
$\Nbd(S)$
is the gluing of
\begin{align*} 
\bL_{\tau_P}
\cong
\bL_{\tau_P / \sigma_P} \times \sigma_P, \
\bL_{\tau_P / \sigma^P_I} \times I_\circ
\cong
\bL_{\tau_P / \sigma_P} \times \sigma_P / \langle \sigma^P_I \rangle \times I_\circ, \
\bL_{\tau_P / \sigma^P_F} \times F_\circ
\cong
\bL_{\tau_P / \sigma_P} \times \sigma_P / \langle \sigma^P_F \rangle \times F_\circ, \
\ldots
\end{align*}
and
$\bL_{\tau_P / \sigma_P} \times \sigma_P / \langle \sigma^P_{S^{(k)}} \rangle \times S^{(k)}_\circ$
where
$\tau_P$
are the cones in
$\Sigma_P$
corresponding to
$S^\prime$.
Since
$\tau_P$
map to the same cone under the quotient maps
$M_{P, \bR}
\to
M_{P, \bR} / \langle \sigma_{P} \rangle
=
M_{S, \bR}$
from Definition
\pref{dfn:fanifold},
by
\pref{lem:filtration}
we obtain
\begin{align*}
\pi^{-1}_k(S^\prime) \cong T^{d^\prime} \times \tau_P / \langle \sigma_P \rangle \times S, \
\pi^{-1}_k(\Nbd(S)) \cong \bL(\Sigma_P / \sigma_P) \times S.
\end{align*}

Consider the symplectomorphism
\begin{align} \label{eq:k-symplectomorphism}
T^* T^d \times T^* S
\hookrightarrow 
[\widetilde{\bfW}(\Phi_{k-1})]
\#_{\cL_{S^{(k)}}}
[T^* \widehat{M}_{S^{(k)}} \times T^* S^{(k)}_\circ]
\end{align}
induced
by the symplectomorphisms
\begin{align*}
T^* T^d \times T^* S_P
\hookrightarrow
T^* \widehat{M}_P, \
T^* T^d \times T^* (\sigma_P / \langle \sigma^P_I \rangle)
\hookrightarrow
T^* \widehat{M}_I, \
T^* T^d \times T^* (\sigma_P / \langle \sigma^P_F \rangle)
\hookrightarrow
T^* \widehat{M}_F, \
\ldots
\end{align*}
and the symplectomorphism
\begin{align*}
T^* T^d \times T^* (\sigma_P / \langle \sigma^P_{S^{(k)}} \rangle)
\hookrightarrow
T^* \widehat{M}_{S^{(k)}}
\end{align*}
defined as
\pref{eq:0-symplectomorphism},
and
the identity on
$T^* I_\circ, T^* F_\circ, \ldots$
and
$T^* S^{(k)}_\circ$.
Since it restricts to isomorphisms
\begin{align*}
\bL(\Sigma_{S_P}) \times S_P
\hookrightarrow
\bL(\Sigma_P), \
\bL(\Sigma_I / (\sigma_P / \langle \sigma^P_I \rangle)) \times \sigma_P / \langle \sigma^P_I \rangle
\hookrightarrow
\bL(\Sigma_I), \
\ldots
\end{align*}
and
an isomorphism
\begin{align} \label{eq:k-restriction}
\bL(\Sigma_{S^{(k)}} / (\sigma_P / \langle \sigma^P_{S^{(k)}} \rangle)) \times \sigma_P / \langle \sigma^P_{S^{(k)}} \rangle
\hookrightarrow
\bL(\Sigma_{S^{(k)}})
\end{align}
defined as
\pref{eq:0-restriction},
the symplectomorphism
\pref{eq:k-symplectomorphism}
embeds
$\bL(\Sigma_S) \times S$
into
$\widetilde{\bL}(\Phi_k)$.

(2)
The fanifold
$\Phi_k$
is closed only if
there are no strata of dimension more than
$k$
and
all strata are interior.
Then the saturation of the zero set of the Liouville vector field on
\begin{align*}
\eta_k(\Nbd_{\del_\infty \widetilde{\bfW}(\Phi_{k-1})}(\cL_k))
\#_{\cL_k}
\bigsqcup_{S^{(k)}} \Nbd_{\del [T^* \widehat{M}_{S^{(k)}} \times T^* S^{(k)}_\circ]}(\cL_{S^{(k)}})
\subset
[\widetilde{\bfW}(\Phi_{k-1})] \#_{\cL_{S^{(k)}}}  \bigsqcup_{S^{(k)}} [T^* \widehat{M}_{S^{(k)}} \times T^* S^{(k)}_\circ]
\end{align*}
gives the newly formed part of
$\Core(\widetilde{\bfW}(\Phi_k))$
by the handle attachment.
Due to the absence of higher dimensional strata,
it projects onto
$\bigsqcup_{S^{(k)}} S^{(k)} \subset \Phi_k$
under
$\pi_k$.
Since all $k$-strata are interior,
by
\pref{thm:fibration}(2)
for
$n = k-1$
its union with
$\widetilde{\bL}(\Phi_{k-1})$
coincides with
$\widetilde{\bL}(\Phi_k)$.

(3)
It suffices to show the claim for
\begin{align*}
\Phi_k = \Phi_{k-1} \#_{\Sigma_{S^{(k)}} \times \del_{in} S^{(k)}} (\Sigma_{S^{(k)}} \times S^{(k)}), \
\Phi^\prime_k = \Phi_{k-1}.
\end{align*}
Then by
definition
and
Remark
\pref{rmk:sector}
$\widetilde{\bfW}(\Phi^\prime_k)$
determines a stopped Weinstein sector in
$\widetilde{\bfW}(\Phi_k)$
with relative skeleton
$\widetilde{\bL}(\Phi^\prime_k)
=
\widetilde{\bfW}(\Phi^\prime_k) \cap \widetilde{\bL}(\Phi_k)$.
Here,
one obtains
$\widetilde{\bfW}(\Phi^\prime_k)$
by completing the gluing of the domains along the modified Liouville flow.

(4)
Consider the Lagrangian foliation of
$T^* T^d \times T^* S$
with leaves 
\begin{align*}
(T^*_\theta T^d \times \{ y \})
\times
(\{ \theta \} \times T^*_y S), \
\theta \in T^d, \
y \in S.
\end{align*}
Via the symplectomorphism
$T^* T^d \times T^* S \cong T^* T^d \times T^* S$
induced by
\pref{eq:k-symplectomorphism},
the leaf space get identified with the zero section,
which in turn is isomorphic to
$\pi^{-1}_k(S)$.
Along the base direction in
$T^* S$,
it is compatible with the inclusion
$S \hookrightarrow \overline{S^\prime}$
to any stratum
$S^\prime$
of codimension
$d^\prime$
of the induced stratification on
$\Nbd(S)$.
Along the fiber direction in
$T^* T^d$,
it is compatible with the inclusions
$T^*_\theta T^{d^\prime} \hookrightarrow T^*_\theta T^d$
induced by the quotient maps
\begin{align*}
T_{S_P} M_{P, \bR}
\to
T_{S^\prime_P} M_{P, \bR} |_{S_P}, \
T_{\sigma_P / \langle \sigma^P_I \rangle} M_{I, \bR}
\to
T_{\sigma^\prime_P / \langle \sigma^P_I \rangle} M_{I, \bR} |_{\sigma_P / \langle \sigma^P_I \rangle}, \
\ldots
\end{align*}
and
$T_{\sigma_P / \langle \sigma^P_{S^{(k)}} \rangle} M_{S^{(k)}, \bR}
\to
T_{\sigma^\prime_P / \langle \sigma^P_{S^{(k)}} \rangle} M_{S^{(k)}, \bR} |_{\sigma_P / \langle \sigma^P_{S^{(k)}} \rangle}$
from Definition
\pref{dfn:fanifold}
for
$\theta \in T^{d^\prime} \cap T^d \subset \widehat{M}_P$.
Hence one obtains the desired polarization.
\end{proof}

\begin{remark}
The symplectomorphism
\pref{eq:k-symplectomorphism}
sends the cotangent fibers of
\begin{align*}
T^* S_P, \
T^* (\sigma_P / \langle \sigma^P_I \rangle), \
T^* (\sigma_P / \langle \sigma^P_F \rangle), \
\ldots
\end{align*}
and
$T^* (\sigma_P / \langle \sigma^P_{S^{(k)}} \rangle)$
to the bases of
$T^* \widehat{M}_P, \
T^* \widehat{M}_I, \
T^* \widehat{M}_F, \
\ldots$
and
$T^* \widehat{M}_{S^{(k)}}$
with negation.
\end{remark}

\begin{remark}
If
$\Sigma_P, \Sigma_I, \Sigma_F, \ldots$
and
$\Sigma_{S^{(k)}}$
are stacky fans,
then we consider stacky FLTZ skeleta 
$\bL(\Sigma_{P_i}), \bL(\Sigma_{I_{ij}}), \bL(\Sigma_F), \ldots$
and
$\bL(\Sigma_{S^{(k)}})$.
According to how many of copies of tori there,
duplicate the corresponding handles
$T^* \widehat{M}_I \times T^* I_\circ, \
T^* \widehat{M}_F \times T^* F_\circ, \
\ldots$
and
$T^* \widehat{M}_{S^{(k)}} \times T^* S^{(k)}_\circ$.
Then our proof generalizes in a straightforward way.
\end{remark}

\section{Proof of \pref{thm:lift}}
Recall that
a
\emph{fibration}
is a map
which satisfies the homotopy lifting property for all topological spaces.
Any fiber bundle over a paracompact Hausdorff base gives an example.
In this section,
we first construct an intermediate filtered stratified fibration
$\tilde{\pi}$
from
$\widetilde{\bfW}(\Phi)$
restricting to
$\pi$,
which defines a filtered stratified integrable system with noncompact fibers.
The composition with a certain map induced by retractions yields
$\bar{\pi}$.
When
$\Sigma_S$
is proper for all
$S \subset \Phi$,
the map is trivial
and
$\bar{\pi}$
defines the integrable system.
As in the previous section,
we proceed by induction on
$k$.

Below,
we will use the disjoint union
\begin{align*}
\ret_k
\colon
\bigsqcup_{S^{(k)}} M_{{S^{(k)}}, \bR}
\to
\bigsqcup_{S^{(k)}} \Sigma_{S^{(k)}}
\end{align*}
for all $k$-strata
$S^{(k)} \subset \Phi$
of the maps
$\ret_{S^{(k)}}
\colon
M_{S^{(k)}, \bR}
\to
\Sigma_{S^{(k)}}$
defined as follows.
If
$\Sigma_{S^{(k)}}$
is proper,
then 
$\ret_{S^{(k)}}$
is induced by the identity.
Otherwise,
beginning with higher dimensional strata in
$\del \Sigma_{S^{(k)}}$,
project the half fibers of
$T_{\del \Sigma_{S^{(k)}}} M_{S^{(k)}, \bR}$
which do not intersect
$\Int(\Sigma_{S^{(k)}})$
to each stratum.
In particular,
the last projection will be contraction of
\begin{align*}
M_{S^{(k)}, \bR} \setminus (T_{\del \Sigma_{S^{(k)}} \setminus 0} M_{S^{(k)}, \bR} \cup \Int(\Sigma_{S^{(k)}}))
\end{align*}
to the origin.
The result gives
$\ret_{S^{(k)}}$.

\begin{example}
Consider the map
$\ret_{P_2} \colon M_{P_2, \bR} \to \Sigma_{P_2}$
for the $0$-stratum
$P_2$
in Example \pref{eg:closed}.
The boundary
$\del \Sigma_{P_2}$
is the union
$\{ 0 \} \times [0, \infty) \cup [0, \infty) \times \{ 0 \}$.
Then we have
\begin{align*}
\begin{gathered}
T_{\sigma^{P_2}_{I_{12}} \cup \sigma^{P_2}_{I_{23}}} M_{P_2, \bR}
\cap
(M_{P_2, \bR} \setminus \Int(\Sigma_{P_2}))
=
(-\infty, 0] \times (0, \infty)
\cup
(0, \infty) \times (-\infty, 0], \\
M_{P_2, \bR} \setminus (T_{\del \Sigma_{P_2} \setminus 0} M_{P_2, \bR} \cup \Int(\Sigma_{P_2}))
=
(-\infty, 0] \times (-\infty, 0]
\end{gathered} 
\end{align*}
which
$\ret_{P_2}$
projects to
$\{ 0 \} \times (0, \infty) \cup (0, \infty) \times \{ 0 \}$
and
$\{ 0 \} \times \{ 0 \}$
respectively.
\end{example}

\subsection{Base case}
\begin{lemma} \label{lem:0-lift}
There is a stratified fibration
$\bar{\pi}_0 \colon \widetilde{\bfW}(\Phi_0) \to \Phi_0$
restricting to
$\pi_0$.
\end{lemma}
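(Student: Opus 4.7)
The plan is to define $\bar{\pi}_0$ componentwise on each factor $T^* \widehat{M}_P \subset \widetilde{\bfW}(\Phi_0)$ by composing the cotangent projection with a stratified retraction from $M_{P, \bR}$ onto the support $|\Sigma_P| := \bigcup_{\sigma \in \Sigma_P} \sigma$. Since $\widetilde{\bfW}(\Phi_0) = \bigsqcup_P T^*\widehat{M}_P$ and $\Phi_0 = \bigsqcup_P \Sigma_P$ are disjoint unions indexed by the $0$-strata $P$ of $\Phi$, it suffices to treat each $P$ separately.

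For a fixed $0$-stratum $P$, write $p_P \colon T^* \widehat{M}_P \cong \widehat{M}_P \times M_{P, \bR} \to M_{P, \bR}$ for the projection onto the second factor. On $\bL(\Sigma_P) = \bigcup_{\sigma \in \Sigma_P} \sigma^\perp \times \sigma$ any point $(x, v) \in \sigma^\perp \times \sigma$ satisfies $v \in \sigma \subset |\Sigma_P|$, so $p_P$ takes values in $|\Sigma_P|$ on $\bL(\Sigma_P)$ and coincides there with $\pi_0$ by construction. When $\Sigma_P$ is proper, i.e., $|\Sigma_P| = M_{P, \bR}$, setting $\bar{\pi}_0 |_{T^*\widehat{M}_P} := p_P$ already yields a stratified fiber bundle --- over each open cone $\sigma \in \Sigma_P$ the preimage is the trivial bundle $\widehat{M}_P \times \sigma \to \sigma$ --- and in fact an integrable system for the standard symplectic form on $T^*\widehat{M}_P$. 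In the general case I would pick a continuous stratified retraction $r_P \colon M_{P, \bR} \to |\Sigma_P|$ which is the identity on $|\Sigma_P|$, and define $\bar{\pi}_0 |_{T^*\widehat{M}_P} := r_P \circ p_P$. The restriction to $\widetilde{\bL}(\Phi_0)$ is unaffected because $r_P$ fixes $|\Sigma_P|$ pointwise, while the stratified fibration property reduces to that of $r_P$: over each stratum $\sigma \in \Sigma_P$ the preimage is the product fibration $\widehat{M}_P \times r_P^{-1}(\sigma) \to \sigma$.

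The only genuine subtlety, which is not an obstacle at this base step but anticipates the inductive argument, is to choose $r_P$ flexibly enough that the retractions on successive strata will glue coherently through the local charts of \pref{thm:fibration}(1) when the handles $T^*\widehat{M}_S \times T^* S_\circ$ are attached for $k \geq 1$. This coherence is precisely what forces the nontrivial homotopy in \pref{thm:lift} and disappears when every normal fan $\Sigma_S$ is proper, in accordance with the theorem statement.
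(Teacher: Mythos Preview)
Your proposal is correct and follows essentially the same approach as the paper: define $\tilde{\pi}_0$ as the disjoint union of cotangent-fiber projections $T^*\widehat{M}_P \to M_{P,\bR}$, then compose with a retraction $\ret_0 \colon M_{P,\bR} \to \Sigma_P$ to obtain $\bar{\pi}_0 = \ret_0 \circ \tilde{\pi}_0$. The only difference is that the paper specifies the retraction concretely---as the canonical extension of piecewise projections onto facets of $\Phi$ in $\partial\Phi$ along their normal directions---whereas you leave $r_P$ as an abstract stratified retraction; your forward-looking remark about coherence under handle attachment correctly identifies why a particular choice matters in the inductive step.
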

\begin{proof}
Define
$\tilde{\pi}_0$
as the disjoint union of the projections to the cotangent fibers
$T^* \widehat{M}_P
\to
M_{P, \bR}$.
Clearly,
its restriction to
$\widetilde{\bL}(\Phi_0)$
coincides with
$\pi_0$.
The composition
$\bar{\pi}_0 = \ret_0 \circ \tilde{\pi}_0$
gives the desired fibration.
\end{proof}


\subsection{General case}
Suppose that
\pref{thm:lift}
holds for the subfanifold
$\Phi_{k-1}$.

\begin{lemma} \label{lem:k-lift}
There is a filtered stratified fibration
$\bar{\pi}_k \colon \widetilde{\bfW}(\Phi_k) \to \Phi_k$
restricting to
$\pi_k$.
\end{lemma}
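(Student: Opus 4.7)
The plan is to mirror the base case construction inductively. First I would construct an intermediate map $\tilde{\pi}_k \colon \widetilde{\bfW}(\Phi_k) \to \bigsqcup_P M_{P, \bR} \#_{\cdots} \bigsqcup_{S^{(k)}} (M_{S^{(k)}, \bR} \times S^{(k)}_\circ)$ by gluing the inductively given $\tilde{\pi}_{k-1}$ on $\widetilde{\bfW}(\Phi_{k-1})$ together with, on each newly attached handle $T^* \widehat{M}_{S^{(k)}} \times T^* S^{(k)}_\circ$, the product of the cotangent fiber projection $T^* \widehat{M}_{S^{(k)}} \to M_{S^{(k)}, \bR}$ with the base projection $T^* S^{(k)}_\circ \to S^{(k)}_\circ$. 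The compatibility of these two definitions on the overlap $\Nbd_{\del_\infty \widetilde{\bfW}(\Phi_{k-1})}(\cL_k)$ follows from the local product factorization provided by \pref{lem:k-biconicity} together with the symplectomorphism \pref{eq:k-symplectomorphism}: on that overlap $\tilde{\pi}_{k-1}$ factors through projection to $M_{S_P, \bR} \times S^{(k)}_\circ$-type coordinates, which matches the handle projection via the quotient maps $M_{P, \bR} \to M_{P, \bR} / \langle \sigma_P \rangle = M_{S, \bR}$ from \pref{dfn:fanifold}.

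Next I would define the retraction $\ret_k$ extending $\ret_{k-1}$ by adjoining, on each slab $M_{S^{(k)}, \bR} \times S^{(k)}_\circ$, the product of a piecewise linear retraction $M_{S^{(k)}, \bR} \to \Sigma_{S^{(k)}}$ (again the canonical extension of the normal projections onto the facets of $\Sigma_{S^{(k)}}$) with the identity on $S^{(k)}_\circ$. Compatibility with $\ret_{k-1}$ along $\del_{in} S^{(k)}$ holds because the relevant retractions are assembled from normal projections, and these commute with the quotient by $\langle \sigma^P_{S^{(k)}} \rangle$. Setting $\bar{\pi}_k = \ret_k \circ \tilde{\pi}_k$ then gives a continuous map to $\Phi_k$; that its restriction to $\widetilde{\bL}(\Phi_k)$ coincides with $\pi_k$ is immediate from construction, since on $\widetilde{\bL}(\Phi_k)$ the intermediate map $\tilde{\pi}_k$ already lands inside $\bigsqcup_S \Sigma_S$ (so $\ret_k$ acts as the identity) and agrees with $\pi_{k-1}$ and the handle projection by the definition of $\pi_k$.

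To verify that $\bar{\pi}_k$ is a filtered stratified fibration, I would observe that $\tilde{\pi}_k$ is itself a fibration in the sense of \pref{thm:fibration}(1) lifted to the full $T^* \widehat{M}_{S, \bR}$-factor, so its fibers over a stratum $S \subset \Phi$ of codimension $d$ are $T^* T^d \times T^*_y S$ after the symplectomorphism \pref{eq:k-symplectomorphism}, and the filtered stratification of $\Phi_k$ from \pref{lem:filtration} pulls back through $\ret_k \circ \tilde{\pi}_k$ to the induced stratification of the total space. The retraction $\ret_k$ is piecewise linear in the fiber direction and deforms the base of $\tilde{\pi}_k$ only outside the interior of each $\Sigma_S$, and when every $\Sigma_S$ is proper there is no such exterior region, whence $\ret_k$ reduces to the identity and $\bar{\pi}_k = \tilde{\pi}_k$ is literally the filtered stratified integrable system with noncompact fibers $T^* T^d \times T^*_y S$. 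In the general case, a straight-line homotopy in $M_{S, \bR}$ between $\tilde{\pi}_k$ and $\bar{\pi}_k$ on each stratum realizes the desired homotopy to the integrable system.

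The main obstacle I expect is the gluing compatibility in the first step: one must check that the locally defined projections assemble into a globally well-defined continuous map across every handle attachment, including at the corners where several strata meet (e.g., a $k$-stratum $S^{(k)}$ whose boundary touches both lower-dimensional interior strata attached at previous steps and the $0$-strata of $\Phi_0$). This requires tracing through the identifications \pref{eq:k-decomposition} for every chain $\sigma^P_{S^{(k)}} \subset \sigma^P_F \subset \sigma^P_I \subset \sigma_P$ and confirming that the normal-direction retractions commute with the quotients from \pref{dfn:fanifold}. A secondary but less serious issue is that $\ret_k$ is only piecewise smooth, so one must be content with $\bar{\pi}_k$ being a \emph{topological} filtered stratified fibration, which is why the homotopy to the integrable system is needed when some $\Sigma_S$ is not proper.
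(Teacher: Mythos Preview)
Your approach matches the paper's: construct an intermediate $\tilde{\pi}_k$ by gluing $\tilde{\pi}_{k-1}$ to the handle projections, compose with a piecewise retraction $\ret_k$, and verify compatibility via the chart \pref{eq:k-symplectomorphism}. The paper's argument for compatibility is exactly the one you sketch, tracing how the bases of the $T^*(\sigma_P/\langle\sigma^P_\bullet\rangle)$ factors are sent to cotangent fibers and how these get connected through the successive $J^1\cL_\bullet$ collars.

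There is one technical step you omit which the paper makes explicit: before gluing, one must precompose with a contraction $\cont_k \colon \widetilde{\bfW}(\Phi_k) \to \widetilde{\bfW}(\Phi_k)$ of the cylindrical ends along the negative Liouville flow, retracting to the complement of the gluing region in $\del_\infty\widetilde{\bfW}(\Phi_{k-1})$. The point is that the handle attachment modified the Liouville structure near $\cL_k$ (via \pref{lem:modify}), so $\tilde{\pi}_{k-1}$, which was built from projections adapted to the \emph{unmodified} structure, is not a priori defined on the altered collar where the new Liouville flow no longer agrees with the old cotangent scaling. Your appeal to \pref{lem:k-biconicity} and \pref{eq:k-symplectomorphism} is correct for the region where those charts apply, but you need $\cont_k$ first to push the cylindrical end back into that region. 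This is precisely the ``gluing compatibility'' obstacle you flag at the end; the contraction is the missing ingredient that resolves it.
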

\begin{proof}
It suffices to show the claim
when attaching the handle
$T^* \widehat{M}_{S^{(k)}} \times T^* S^{(k)}_\circ$
to
$\widetilde{\bfW}(\Phi_{k-1})$
for a single interior $k$-stratum
$S^{(k)} \subset \Phi$
with
$k \leq \dim \Phi$.
Define
$\tilde{\pi}_k$
as the map canonically induced by
$\tilde{\pi}_{k-1}, \tilde{\pi}_{0, S^{(k)}}$
and
the projection
$T^* S^{(k)}_\circ \to S^{(k)}_\circ$
to the base,
where
$\tilde{\pi}_{0, S^{(k)}}
\colon
T^* \widehat{M}_{S^{(k)}} \to M_{S^{(k)}, \bR}$
is the projection to the cotangent fibers.
Here,
we precompose the contraction
\begin{align*}
\cont_k \colon \widetilde{\bfW}(\Phi_k) \to \widetilde{\bfW}(\Phi_k)
\end{align*}
of the cylindrical ends along the negative Liouville flow to the complement of the gluing region in
$\del_\infty \widetilde{\bfW}(\Phi_{k-1})$.
Define
$\bar{\pi}_k$
as the map canonically induced by
$\bar{\pi}_{k-1},
\bar{\pi}_{0, S^{(k)}} = \ret_k \circ \tilde{\pi}_{0, S^{(k)}}$
and
the projection
$T^* S^{(k)}_\circ \to S^{(k)}_\circ$
to the base.

Recall that the symplectomorphism
\pref{eq:k-symplectomorphism}
sends the bases of
\begin{align*}
T^* S_P, \
T^* (\sigma_P / \langle \sigma^P_I \rangle), \
T^* (\sigma_P / \langle \sigma^P_F \rangle), \
\ldots
\end{align*}
and
$T^* (\sigma_P / \langle \sigma^P_{S^{(k)}} \rangle)$
to the cotangent fibers of
\begin{align*}
T^* \widehat{M}_P, \
T^* \widehat{M}_I, \
T^* \widehat{M}_F, \
\ldots
\end{align*}
and
$T^* \widehat{M}_{S^{(k)}}$
preserving
$T^* I_\circ, T^* F_\circ, \ldots$,
where
$P, I, F, \ldots$
run through strata of dimension
$0, 1, 2, \ldots$
in
$\del S^{(k)}_\circ$
with
$P \subset \del_{in} I,
P, I \subset \del_{in} F,
\ldots$
as above.
In the gluing
\begin{align} \label{eq:k-model}
\widetilde{\bfW}(\Phi_k)
=
[\bfW(\Phi_{k-1})] \#_{\cL_{S^{(k)}}} [T^* \widehat{M}_{S^{(k)}} \times T^* S^{(k)}_\circ]
\end{align}
the cotangent fibers of the images of
$T^* S_P$
get connected through
$J^1 \cL_I$
with the products of
the cotangent fibers of the images of
$T^* (\sigma_P / \langle \sigma^P_I \rangle)$
and
the bases of
$T^* I_\circ$.
The results get connected through
$J^1 \cL_F$
with the product of
the cotangent fibers of the images of
$T^* (\sigma_P / \langle \sigma^P_F \rangle)$
and
the bases of
$T^* F_\circ$.
Inductively,
the results for
$n = k-1$
get connected through
$J^1 \cL_{S^{(k)}}$
with the product of
the cotangent fibers of the image of
$T^* (\sigma_P / \langle \sigma^P_{S^{(k)}} \rangle)$
and
the base of
$T^* S^{(k)}_\circ$.
Hence
\pref{eq:k-symplectomorphism}
respects
the gluing
\pref{eq:k-model}
and
the gluing of
\begin{align*}
T^* T^d \times T^* S_P, \
T^* T^d \times T^* (\sigma_P / \langle \sigma^P_I \rangle) \times T^* I_\circ, \
T^* T^d \times T^* (\sigma_P / \langle \sigma^P_F \rangle) \times T^* F_\circ, \
\ldots
\end{align*}
and
$T^* T^d \times T^* (\sigma_P / \langle \sigma^P_{S^{(k)}} \rangle) \times T^* S^{(k)}_\circ$
induced by the gluing of the zero sections.

By definition
$\bar{\pi}_k$
projects
the former part of
\pref{eq:k-model}
onto the gluing of
\begin{align*}
S_P, \
\sigma_P / \langle \sigma^P_I \rangle \times I_\circ, \
\sigma_P / \langle \sigma^P_F \rangle \times F_\circ, \
\ldots
\end{align*}
and
the latter part onto
$\sigma_P / \langle \sigma^P_{S^{(k)}} \rangle \times S^{(k)}_\circ$
in the gluing
$S$.
Hence
$\bar{\pi}_k$
is compatible with the relevant gluing procedure.
As explained in the proof of
\pref{lem:k-original}(4),
the source
$T^* T^d \times T^* S$
contains
$\pi^{-1}_k(S)$
as the zero section.
Then by
\pref{lem:k-original}(1)
we have
$\bar{\pi}_k |_{\widetilde{\bL}(\Phi_k)}
=
\pi_k$.
\end{proof}

\begin{remark} \label{rmk:k-proper}
If
$\Sigma_S$
is proper for all
$S \subset \Phi$,
then
$\ret_i, i = 0, 1, \ldots, k$
are trivial
and
$\bar{\pi}_k = \tilde{\pi}_k$.
\end{remark}

\begin{remark} \label{rmk:k-tilde}
From
the above proof
and
Definition
\pref{dfn:fanifold}
it follows that
$\tilde{\pi}_k$
projects
the former part of
\pref{eq:k-model}
onto the gluing of
\begin{align*}
M_{P, \bR}, \
M_{I, \bR} \times I_\circ, \
M_{F, \bR} \times F_\circ, \
\ldots
\end{align*}
and the latter part onto
$M_{S^{(k)}, \bR} \times S^{(k)}_\circ$
in the gluing
\begin{align*}
\bigsqcup_P M_{P, \bR}
\#
\bigsqcup_I (M_{I, \bR} \times I_\circ)
\#
\bigsqcup_F (M_{F, \bR} \times F_\circ)
\cdots
\#
(M_{S^{(k)}, \bR} \times S^{(k)}_\circ).
\end{align*}
Hence
$\tilde{\pi}_k$
is also compatible with the relevant gluing procedure.
\end{remark}

\subsection{The associated integrable system}
\begin{definition} \label{dfn:IS}
Let
$(W, \omega)$
be a $2n$-dimensional symplectic manifold,
$B$
an $n$-dimensional manifold
and
$\varpi \colon W \to B$
a smooth surjective map.
A triple
$(W, \varpi, B)$
is an
\emph{integrable system}
if
$\varpi$
satisfies
\begin{align*}
\{ \varpi^{-1} (f), \varpi^{-1} (g) \} = 0, \
f, g \in C^\infty(B)
\end{align*}
where
$\{ \cdot, \cdot \}$
is the Poisson bracket on
$W$.
\end{definition}


\begin{remark}
Here,
we do not require the fibers of
$\varpi$
to be compact.
\end{remark}

\begin{example}
A collection
$(H_1, \ldots, H_n)$
of Hamiltonian functions on
$W$
defines a typical example of integrable systems.
In particular,
on local coordinates
$(q, p) = (q_1, \ldots, q_n, p_1, \ldots, p_n)$
the projection to the base
$(q, p) \mapsto q$
defines an integrable system,
as we have
\begin{align*}
\{ q_i, q_j \}
=
\sum^n_{k = 1} dq_k \wedge dp_k(Z_{q_i}, Z_{q_j})
=
\sum^n_{k = 1} dq_k \wedge dp_k(\del_{p_i}, \del_{p_j})
=
0
\end{align*}
where
$Z_{q_i}$
are
Hamiltonian vector fields for
$q_i$.
Similarly,
the projection to the cotangent fibers
$(q, p) \mapsto p$
defines another integrable system.
\end{example}

\begin{definition}
We call a continuous map from a symplectic manifold
$W$
to
$\Phi$
a
\emph{filtered stratified integrable system}
if its restriction over each filter
$\bar{\Phi}_k \setminus \Phi_{k-1}$
becomes an integrable system in the sense of Definition
\pref{dfn:IS}.
\end{definition}

Consider the map
\begin{align*}
\tilde{\pi}
=
\tilde{\pi}_n
\colon
\widetilde{\bfW}(\Phi)
\to
\bigsqcup_P M_{P, \bR}
\#
\bigsqcup_I (M_{I, \bR} \times I_\circ)
\#
\bigsqcup_F (M_{F, \bR} \times F_\circ)
\cdots
\#
\bigsqcup_{S^{(n)}} (M_{S^{(n)}, \bR} \times S^{(n)}_\circ)
\end{align*}
where
$P, I, F, \ldots, S^{(n)}$
run through strata of dimension
$0, 1, 2, \ldots, n$
of
$\Phi$.
If
\begin{align*}
P \subset \del_{in} I, \
P, I \subset \del_{in} F, \
\ldots
P, I, F, \ldots, S^{(n-1)} \subset \del_{in} S^{(n)},
\end{align*}
then by Definition
\pref{dfn:fanifold}
\begin{align*}
\sigma^P_{S^{(n)}} / \langle \sigma^P_I \rangle, \
\sigma^P_{S^{(n)}} / \langle \sigma^P_F \rangle, \
\ldots
\sigma^P_{S^{(n)}} / \langle \sigma^P_{S^{(n)}} \rangle
\end{align*}
regarded as cones in
$\Sigma_I, \Sigma_F, \ldots, \Sigma_{S^{(n)}}$
do not depend on the choice of
$P$
and
we fix some
$P$
when such
$I, F, \ldots, S^{(n)}$
run. 
By construction
$\tilde{\pi}$
respects the gluing.

\begin{lemma} \label{lem:tilde-IS}
The map
$\tilde{\pi}$
defines a filtered stratified integrable system with noncompact fibers.
\end{lemma}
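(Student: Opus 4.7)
The plan is to reduce the verification of the integrable system condition to a local check on each handle, using the fact that $\tilde\pi$ has been constructed handle-by-handle via standard cotangent projections, and then to propagate this across the gluings via the symplectomorphisms \pref{eq:k-symplectomorphism}.

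First, I would note that on each building block $T^*\widehat{M}_{S^{(k)}} \times T^*S^{(k)}_\circ$, the restriction of $\tilde\pi$ is exactly the product of the projection to cotangent fibers $T^*\widehat{M}_{S^{(k)}} \to M_{S^{(k)},\bR}$ and the projection to the base $T^*S^{(k)}_\circ \to S^{(k)}_\circ$. As observed in the Example preceding the lemma, each of these two projections defines an integrable system on the respective cotangent bundle: in Darboux coordinates $(q,p)$ one checks $\{q_i,q_j\} = \{p_i,p_j\} = 0$. Since the symplectic form on the product is the direct sum and the two factors of the map involve disjoint Darboux coordinates, the combined map is also an integrable system. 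Thus on each piece of the handle decomposition of $\widetilde{\bfW}(\Phi)$, the map $\tilde\pi$ Poisson-commutes the pulled-back functions.

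Next, I would verify that this Poisson-commuting property is preserved under the gluings used to construct $\widetilde{\bfW}(\Phi)$. By the explicit description in the proof of \pref{lem:k-lift} and \pref{rmk:k-tilde}, the symplectomorphism \pref{eq:k-symplectomorphism} identifies the cotangent fibers and bases of adjacent handles in a manner that intertwines $\tilde\pi$ across the gluing regions $J^1\cL_{S^{(k)}}$. Since \pref{eq:k-symplectomorphism} is by construction a symplectomorphism, it is automatically a Poisson morphism, and hence the local Poisson bracket relations $\{\tilde\pi^*f,\tilde\pi^*g\}=0$ agree on overlaps. Inductively along the filtration $\Phi_0\subset\cdots\subset\Phi_n=\Phi$ from \pref{lem:filtration}, this yields a globally defined integrable system. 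The filtered stratified structure of the target is inherited directly from the filtration of $\Phi$ and the induced stratification of the glued target, with $\tilde\pi$ sending $\widetilde{\bfW}(\Phi_k)$ into the $k$-th filter by construction.

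For the noncompact fibers, I would inspect the local description: over a generic point of $M_{S^{(k)},\bR} \times S^{(k)}_\circ$, the fiber of $\tilde\pi$ inside $T^*\widehat{M}_{S^{(k)}} \times T^*S^{(k)}_\circ$ is isomorphic to $\widehat{M}_{S^{(k)}} \times T^*_xS^{(k)}_\circ$, the cotangent fiber factor being a noncompact linear space. After the handle gluings this product structure persists in the generic fibers (it is exactly what yields the local chart \pref{eq:charts} at the Lagrangian level), and so the fibers remain noncompact in general.

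The main obstacle I anticipate is checking Poisson-compatibility precisely on the gluing strips, where the Liouville form has been modified during the handle attachment process (the adjustments of \pref{lem:modify}). Strictly, one must know that these modifications do not affect the symplectic form, only the primitive, so that the Poisson bracket is unchanged; and that the identifications used to perform the gluing are symplectomorphisms rather than mere diffeomorphisms. Both facts are built into the Weinstein handle attachment framework recalled in Section 3, so the verification reduces to unwinding the definitions rather than to any new computation.
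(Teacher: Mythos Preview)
Your proposal is correct and follows essentially the same approach as the paper. The paper's own proof is very terse: it observes that, restricted to each stratum in a given filter $\bar\Phi_k\setminus\Phi_{k-1}$, the map $\tilde\pi$ is a product of the projection $T^*\widehat M_{S^{(k)}}\to M_{S^{(k)},\bR}$ to cotangent fibers with the projection $T^*S^{(k)}_\circ\to S^{(k)}_\circ$ to the base, hence is an integrable system given by a collection of Hamiltonians; your argument elaborates exactly this point and additionally spells out the gluing compatibility via \pref{eq:k-symplectomorphism}, which the paper treats as implicit.

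One small remark on emphasis: since the claim is only that $\tilde\pi$ is a \emph{filtered stratified} integrable system, the Poisson relations need only be checked stratumwise within each filter, not globally across the whole of $\widetilde{\bfW}(\Phi)$. Your careful discussion of Poisson-compatibility on the gluing strips is therefore more than strictly required (though not wrong), and the anticipated obstacle concerning the Liouville-form modifications of \pref{lem:modify} does not arise at all, since those modifications leave $\omega$, and hence the Poisson bracket, untouched.
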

\begin{proof}
When restricted to each stratum in the filter over
$\bar{\Phi}_k \setminus \Phi_{k-1}$,
clearly
$\widetilde{\bfW}(\Phi)$
becomes a smooth surjective submersion.
Moreover,
$\tilde{\pi}$
is the gluing of products of
the projection from
$T^* \widehat{M}_{S^{(k)}}$
to the cotangent fibers
and
the projection from
$T^* S^{(k)}_\circ$
to the base.
Hence the restriction is an integrable system defined by a collection of Hamiltonian functions.
\end{proof}

Consider the map
\begin{align*}
\bar{\pi} = \bar{\pi}_n \colon \widetilde{\bfW}(\Phi) \to \Phi.
\end{align*}
In general,
$\bar{\pi}$
does not define an integrable system.
For instance,
on $0$-strata
$P_1, \ldots, P_4$
of the fanifold from Example
\pref{eg:closed}
it is not even
$C^1$.
Nevertheless,
from
construction
and
Remark
\pref{rmk:k-proper}
for
$k = n$
it follows 

\begin{lemma}
The map
$\bar{\pi}$
is homotopic to
$\tilde{\pi}$.
If
$\Sigma_S$
is proper for all
$S \subset \Phi$,
then the homotopy becomes trivial
and
$\bar{\pi}$
defines a filtered stratified integrable system with noncompact fibers.
\end{lemma}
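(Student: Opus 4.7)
The plan is to exploit the decomposition arising from the proofs of \pref{lem:0-lift} and \pref{lem:k-lift}, namely that at every step $\bar{\pi}_k = \ret_k \circ \tilde{\pi}_k$ where $\ret_k \colon M_{S^{(k)}, \bR} \to \Sigma_{S^{(k)}}$ is the piecewise linear retraction obtained by canonical extension of projections onto facets of $\Phi$ along outward normal directions. Hence it suffices to deform each $\ret_k$ to the identity of $M_{S^{(k)}, \bR}$ through piecewise linear maps, postcompose with $\tilde{\pi}$, and assemble these local homotopies across the filtration; both $\tilde{\pi}$ and $\bar{\pi}$ are then regarded as maps into the glued ambient space from Remark \pref{rmk:k-tilde}, with $\bar{\pi}$ having image in the subspace $\Phi$.

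First, on each stratum I would take the straight-line interpolation
\begin{align*}
H^{(k)}_t = (1-t) \cdot \id_{M_{S^{(k)}, \bR}} + t \cdot \ret_k, \quad t \in [0,1],
\end{align*}
which is continuous because $\ret_k$ is piecewise linear and $M_{S^{(k)}, \bR}$ is a real vector space; at $t = 0$ it is the identity, and at $t = 1$ it is $\ret_k$. Postcomposing $\tilde{\pi}_{0, S^{(k)}}$ with $H^{(k)}_t$ and inserting this family into the inductive construction of $\bar{\pi}_k$ in place of $\ret_k$ yields an intermediate stratumwise family $\bar{\pi}_{k,t}$ interpolating between $\tilde{\pi}_k$ and $\bar{\pi}_k$.

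Next, I would glue these homotopies along the structure recorded in Remark \pref{rmk:k-tilde}. Since both $\ret_k$ and the quotient maps $M_{S, \bR} \to M_{S^\prime, \bR}$ from Definition \pref{dfn:fanifold} are linear, the interpolant $H^{(k)}_t$ intertwines with the gluing at every $t$. The resulting global continuous family $\bar{\pi}_t \colon \widetilde{\bfW}(\Phi) \to \widetilde{\Phi}$ (where $\widetilde{\Phi}$ denotes the glued ambient target of $\tilde{\pi}$) then realizes $\bar{\pi}_0 = \tilde{\pi}$ and $\bar{\pi}_1 = \bar{\pi}$, proving the homotopy claim.

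Finally, under the properness hypothesis, the support of each $\Sigma_S$ fills $M_{S, \bR}$, so each $\ret_k$ equals $\id_{M_{S^{(k)}, \bR}}$ and $H^{(k)}_t$ is constant in $t$. The assembled homotopy collapses to the constant family and $\bar{\pi} = \tilde{\pi}$; the integrable system conclusion follows directly from \pref{lem:tilde-IS}. The main obstacle I anticipate is verifying continuity of the glued homotopy at the handle attachment loci $\cL_k$, where $\tilde{\pi}$ transitions between the cotangent fiber projections of $T^* \widehat{M}_{S^{(k)}}$ and the base projections of $T^* S^{(k)}_\circ$; one must check that the straight-line interpolation of $\ret_k$ is compatible with the biconic standard neighborhoods $\eta_k$ from \pref{lem:k-biconicity} and with the contraction $\cont_k$ of cylindrical ends embedded in the definition of $\bar{\pi}_k$, so that the interpolating family genuinely lands in the relevant glued target for every $t \in [0,1]$.
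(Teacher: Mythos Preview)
Your proposal is correct and follows the same idea as the paper: both rely on the fact that $\bar{\pi}$ differs from $\tilde{\pi}$ only by postcomposition with the retractions $\ret_k$, which are homotopic to the identity, and that properness of each $\Sigma_S$ forces $\ret_k = \id$ (this is exactly Remark~\ref{rmk:k-proper}), whence $\bar{\pi} = \tilde{\pi}$ and \pref{lem:tilde-IS} applies. The paper in fact gives no argument beyond the phrase ``from construction and Remark~\ref{rmk:k-proper} for $k = n$ it follows,'' so your explicit straight-line interpolation and your discussion of gluing compatibility across the handle attachment loci supply strictly more detail than the paper does; the concern you flag about continuity at $\cL_k$ is legitimate but is likewise not addressed in the paper.
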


\section{SYZ picture}
\subsection{The associated dual stratified spaces}
First,
we recall a fundamental piece of $B$-side SYZ fibrations over stratified spaces dual in a certain sense to fanifolds.
Let
$X_{\Sigma}$
be the $n$-dimensional toric variety associated with a fan
$\Sigma \subset M_\bR$
for a lattice
$M \cong \bZ^n$.
Consider the symplectic moment map
$X_\Sigma \to M^\vee_\bR = \Lie(T_M)^\vee$
for the natural action on
$X_\Sigma$
of the real torus
$T_M \subset M_\bR \otimes_\bZ \bC^*$.
Assume that
$\Sigma$
is the normal fan of a full dimensional lattice polytope
$Q$.
Then the symplectic moment map restricts to a homeomorphism
$(X_\Sigma)_{\geq 0} \to Q$
from nonnegative real points
\cite[Exercise 12.2.8]{CLS}.

Assume further that
$Q$
is very ample.
Then
$Q \cap M^\vee = \{ m_0, \ldots, m_s \}$
defines an embedding
\begin{align*}
X_\Sigma \hookrightarrow \bP^s, \
x \mapsto (\chi^{m_0}(x), \ldots, \chi^{m_s}(x))
\end{align*}
where
$\chi^{m_i} \colon T_{M, \bC} \to \bC^*$
denote the characters.
Recall from
\cite[Section 12.2]{CLS}
that the
\emph{algebraic moment map}
is given by
\begin{align*}
X_\Sigma \to M^\vee_\bR, \
x \mapsto \left( \sum_{m \in Q \cap M^\vee} |\chi^m(x)| \right)^{-1} \sum_{m \in Q \cap M^\vee} |\chi^m(x)|m.
\end{align*}
By
\cite[Theorem 12.2.5]{CLS}
its restriction
$(X_\Sigma)_{\geq 0} \to M^\vee_\bR$
is a homeomorphism to
$Q$.
Note that
the induced homeomorphism depends on the choice of
$Q$.

Consider the map
$X_\Sigma \to (X_\Sigma)_{\geq 0}$
induced by the retraction.
By
\cite[Proposition 12.2.3]{CLS}
the fiber over a point of
$O(\sigma)_{\geq 0}$
for each cone
$\sigma \in \Sigma$
is isomorphic to
$T^{n - \dim \sigma}$.
Here,
$O(\sigma)$
is the orbit corresponding to
$\sigma$
via
\cite[Theorem 3.2.6]{CLS}.
Hence the composition
\begin{align} \label{eq:moment}
\mom_Q
\colon
X_\Sigma
\to
(X_\Sigma)_{\geq 0}
\to
Q
\end{align}
gives a stratified torus fibration.
In fact,
$Q$
gives the dual cell complex to
$\Sigma$.
Since
$\mom_Q$
is compatible with taking subfans,
one can also define it
when
$Q$
is noncompact.

Now,
in order to define the associated dual stratified space,
we assume
$\Phi$
to satisfy the following additional condition.
\begin{itemize}
\item[(vi)]
There is a collection of full dimensional lattice polytopes
$Q_P \subset M^\vee_{P, \bR}$
such that
$\Sigma_P$
are subfans of the normal fans of
$Q_P$
and
for some collection
$\{ l_P \}_{P \in \Phi_0}$
of integers
$l_P Q_P$
are very ample
and
$\mom_{l_P Q_P} (X_{\Sigma_P})$
glue along the inclusions obtained by Definition
\pref{dfn:fanifold}.
\end{itemize}
Here,
we explain more about the condition
(vi).
Given a fanifold
$\Phi$,
we have the disjoint union of toric varieties
$X_{\Sigma_P}$
associated with the fans
$\Sigma_P \subset M_{P, \bR}$
for all $0$-strata.
Consider an exit path
\begin{align*}
P \to S^{(k)} \to \cdots \to S^{(n)}
\end{align*}
to a top dimensional stratum
$S^{(n)}$.
By Definition
\pref{dfn:fanifold}
we have the sequence of quotients
\begin{align*}
T_P \cM \cong M_{P, \bR}
\to
T_{S^{(k)}} \cM |_P \cong M_{S^{(k)}, \bR}
\to
\cdots
\to
T_{S^{(n)}} \cM |_P \cong M_{S^{(n)}, \bR}.
\end{align*}
Taking dual,
we obtain a sequence of inclusions
\begin{align*}
M^\vee_{S^{(n)}, \bR}
\hookrightarrow
\cdots
\hookrightarrow
M^\vee_{S^{(k)}, \bR}
\hookrightarrow
M^\vee_{P, \bR},
\end{align*}
which induces a sequence of inclusions of cones
\begin{align*}
(\sigma_P / \langle \sigma^P_{S^{(n)}} \rangle)^\vee
\hookrightarrow
\cdots
\hookrightarrow
(\sigma_P / \langle \sigma^P_{S^{(k)}} \rangle)^\vee
\hookrightarrow
(\sigma_P)^\vee
\end{align*}
for the cone
$\sigma_P \in \Sigma_P$
corresponding to
$S^{(n)}$.
Note that,
when we fix an inner product on
$M_{P, \bR}$,
\begin{align*}
\Int(\sigma_P)^\vee,
\Int(\sigma_P / \langle \sigma^P_{S^{(k)}} \rangle)^\vee,
\ldots,
\Int(\sigma_P / \langle \sigma^P_{S^{(n)}} \rangle)^\vee
\end{align*}
can be regarded as fibers of
$T_P \cM, T_{S^{(k)}} \cM, \cdots, T_{S^{(n)}} \cM$,
since they are nonempty open star domains of
$M^\vee_{P, \bR}, M^\vee_{S^{(k)}, \bR}, \ldots, M^\vee_{S^{(n)}, \bR}$.

Another exit path
\begin{align*}
P^\prime \to S^{(k)} \to \cdots \to S^{(n)}
\end{align*}
induces a sequence of inclusions of cones
\begin{align*}
(\sigma_{P^\prime} / \langle \sigma^P_{S^{(n)}} \rangle)^\vee
\hookrightarrow
\cdots
\hookrightarrow
(\sigma_{P^\prime} / \langle \sigma^P_{S^{(k)}} \rangle)^\vee
\hookrightarrow
(\sigma_{P^\prime})^\vee
\end{align*}
for the cone
$\sigma_{P^\prime} \in \Sigma_{P^\prime}$
corresponding to
$S^{(n)}$.
Then a choice of compatible identifications
\begin{align*}
(\sigma_P / \langle \sigma^P_{S^{(n)}} \rangle)^\vee
\cong
(\sigma_{P^\prime} / \langle \sigma^{P^\prime}_{S^{(n)}} \rangle)^\vee, \
\ldots,
(\sigma_P / \langle \sigma^P_{S^{(k)}} \rangle)^\vee
\cong
(\sigma_{P^\prime} / \langle \sigma^{P^\prime}_{S^{(k)}} \rangle)^\vee
\end{align*}
for all strata
$S^{(k)}$
with
$P, P^\prime \subset \bar{S}^{(k)}$
and
$S^{(k)} \subset \bar{S}^{(n)}$
give a gluing datum for
$(\sigma_P)^\vee$
and
$(\sigma_{P^\prime})^\vee$.
Identifying
$(\sigma_P)^\vee$
with the image
$\mom_{l_P Q_P}(X_{\Sigma_P \cap \sigma_P})$
for each
$P \in \Phi_0$,
the condition
(vi)
requires the existence of such gluing data compatible with all exit paths. 

\begin{definition} \label{dfn:dual}
Let
$\Phi$
be a fanifold satisfying the additional condition
(vi).
We define its
\emph{associated dual stratified space}
$\Psi$
as the  gluing of
$\mom_{l_P Q_P}(X_{\Sigma_P})$
with the canonical stratification in a sufficiently large ambient space
$\cN$.
For a $k$-stratum
$S^{(k)}$
of
$\Phi$,
its
\emph{dual stratum}
$S^{(k), \perp}$
of
$\Psi$
is the stratum defined by the image
$\mom_{l_P Q_P}(O(\sigma^P_{S^{(k)}}))$.
\end{definition}

The dual stratified space
$\Psi$
is well defined up to canonical stratified isomorphism.
Indeed,
it depends only on
$\mom_{l_P Q_P}(X_{\Sigma_P})$
for a single $0$-stratum
$P \in \Phi_0$,
which in turn amounts to fixing a dual cell complex of
$\Sigma_P$.
Then the condition
(vi)
automatically fixes that of the other $0$-strata.
On the other hand,
the image
$\mom_{l^\prime_P Q^\prime_P}(X_{\Sigma_P})$
for another
full dimensional lattice polytope
$Q^\prime_P$
and
integer
$l^\prime_P$
might differ from
$\mom_{l_P Q_P}(X_{\Sigma_P})$
only by
translation
and
dilation,
both of
which respect the canonical stratifications.

\begin{example}
Let
$\Phi$
be the fanifold from Example
\pref{eg:closed}.
Adding rays to
$P_1, P_2, P_3, P_4$
parallel to vectors
$(-1,1), (-1,-1), (1,-1), (1,1)$,
we obtain the normal fans
$\Sigma_{Q_{P_i}}$
of full dimensional lattice polytopes
$Q_{P_i}$.
Since we have
$\dim M_{P_i, \bR} = 2$,
by
\cite[Corollary 2.2.19]{CLS}
any full dimensional lattice polytope is very ample.
Then
$\mom_{Q_{P_i}}(X_{\Sigma_{P_i}})$
glue to yield a stratified space
$\Psi \subset M^\vee_\bR$.
Its
$0$-stratum
$F^\perp$
is a point
$(1/2, 1/2)$,
$1$-strata
$I^\perp_{12}, I^\perp_{23}, I^\perp_{34}, I^\perp_{14}$
are defined by rays from
$F^\perp$
parallel to vectors
$(-1, 0), (0, -1), (1, 0), (0, 1)$,
and
$2$-strata
$P^\perp_1, P^\perp_2, P^\perp_3, P^\perp_4$
are defined by quadrants with the origin placed at
$F^\perp$
bounded by the pairs
$(I^\perp_{12}, I^\perp_{14}),
(I^\perp_{12}, I^\perp_{23}),
(I^\perp_{23}, I^\perp_{34}),
(I^\perp_{14}, I^\perp_{34})$.
\end{example}

\begin{example}[{\cite[Example 4.24]{GS2}}]
Let
$\Sigma^\circ$
be the fan obtained from the standard fan
$\Sigma_{\bA^3} \subset \bR^3$
for the toric variety
$\bA^3$
by deleting the rays.
It has a fanifold structure inherited by the canonical one on
$\Sigma^\circ \cap S^2$,
which is a $2$-simplex without its vertices.
The absence of vertices immediately imply the failure of the condition
(vi). 
\end{example}

\begin{lemma} \label{lem:dual-filtration}
Let
$\Phi$
be a fanifold of dimension
$n$
satisfying the additional condition
(vi).
Then its associated dual stratified space
$\Psi$
admits a filtration
\begin{align} \label{eq:filtration2}
\Psi_0 \subset \Psi_1 \subset \cdots \subset \Psi_n = \Psi
\end{align}
where
$\Psi_k$
is a stratified subspace defined as the complement in
$\Psi$
of $k$-skeleta
$\Sk_{n-k-1}(\Psi)$,
the closure of the subset of $n-k-1$-strata with
$\Sk_{-1}(\Psi) = \emptyset$.
\end{lemma}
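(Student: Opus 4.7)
The plan is to proceed by induction on $k$, mirroring \pref{lem:filtration} but working top-down on the dimension of strata of $\Psi$. The key preliminary observation is that, by Definition \pref{dfn:dual} and the compatibility required in condition (vi), the strata of $\Psi$ are in bijection with those of $\Phi$ via $S^{(k)} \leftrightarrow S^{(k),\perp}$, and the dimensions satisfy $\dim S^{(k),\perp} = n - k$. Indeed $S^{(k),\perp}$ arises as $\mom_{l_P Q_P}(O(\sigma_P))$ for a cone $\sigma_P \in \Sigma_P$ of dimension $k$, whose associated orbit has dimension $n - k$ by \cite[Proposition 12.2.3]{CLS}.

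Granting this, each skeleton $\Sk_j(\Psi)$ is unambiguously the closure in $\Psi$ of the union of dual strata of dimension at most $j$, and these skeleta form a nested chain
\begin{align*}
\emptyset = \Sk_{-1}(\Psi) \subset \Sk_0(\Psi) \subset \cdots \subset \Sk_{n-1}(\Psi) \subset \Sk_n(\Psi) = \Psi
\end{align*}
of closed subspaces. Defining $\Psi_k = \Psi \setminus \Sk_{n-k-1}(\Psi)$ as in the statement and taking complements of this chain, I obtain the desired filtration $\Psi_0 \subset \Psi_1 \subset \cdots \subset \Psi_n = \Psi$. Each $\Psi_k$ is open in $\Psi$, inherits a canonical stratification whose strata are precisely the dual strata $S^{(l),\perp}$ with $l \leq k$, and $\Psi_n = \Psi$ thanks to the convention $\Sk_{-1}(\Psi) = \emptyset$. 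The inductive viewpoint, parallel to \pref{lem:filtration}, is that $\Psi_k$ is built from $\Psi_{k-1}$ by adjoining the open strata $(S^{(k),\perp})_\circ$ dual to the $k$-strata of $\Phi$.

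The main technical obstacle I anticipate is verifying that the dual stratification glues consistently across the toric charts $\mom_{l_P Q_P}(X_{\Sigma_P})$ as $P$ ranges over $\Phi_0$. Concretely, given two $0$-strata $P, P^\prime$ connected via a common higher-dimensional stratum $S^{(k)} \subset \Phi$, I would trace through the quotient identifications of Definition \pref{dfn:fanifold} together with the perpendicular cone construction underlying Definition \pref{dfn:dual} to confirm that $\mom_{l_P Q_P}(O(\sigma_P))$ and $\mom_{l_{P^\prime} Q_{P^\prime}}(O(\sigma_{P^\prime}))$ are glued to the same dual cell $S^{(k),\perp}$ in $\Psi$ with matching dimension and closure relations. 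Once such compatibility is established along all exit paths, the monotonicity $\Sk_{j-1}(\Psi) \subset \Sk_j(\Psi)$ is automatic and the filtration statement follows formally.
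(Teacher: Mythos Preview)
Your proposal is correct and aligned with the paper's approach: the paper's own proof is the single sentence ``It follows immediately from the construction of $\Psi$,'' and what you have written is precisely an unpacking of that immediacy---the dimension duality $\dim S^{(k),\perp}=n-k$, the nested chain of skeleta, and the complement-taking all come straight from Definition~\pref{dfn:dual} and condition~(vi). Your concern about gluing consistency across toric charts is already absorbed into condition~(vi) by hypothesis, so no further work is needed there; once you grant~(vi), the filtration statement is indeed formal.
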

\begin{proof}
It follows immediately from the construction of
$\Psi$.
\end{proof}

In
\cite[Section 3]{GS1},
to a fanifold
$\Phi$
Gammage--Shende associated the colimit
\begin{align*}
\bfT(\Phi) = \varinjlim_S X_{\Sigma_S}
\end{align*}
along closed embeddings induced by quotient maps between fans,
where
$S$
runs through all strata of
$\Phi$.
By
\cite[Proposition 3.10]{GS1}
the colimit
$\bfT(\Phi)$
always exists as an algebraic space.
We will recall later that
$\bfT(\Phi)$
is a mirror partner of
$\widetilde{\bfW}(\Phi)$.
When
$\Phi$
admits the associated dual stratified space,
$B$-side SYZ fibration over
$\Psi$
for the pair
$(\widetilde{\bfW}(\Phi), \bfT(\Phi))$
should be the following. 

\begin{definition}
Let
$\Phi$
be a fanifold satisfying the additional condition
(vi).
Fix its associated dual stratified space
$\Psi$.
We define
\begin{align*}
\mom_\Phi \colon \bfT(\Phi) \to \Psi
\end{align*}
as the gluing of
$\mom_{l_P Q_P}$
where
$P$
runs through all $0$-strata of
$\Phi$.
\end{definition}

Note that
gluing of
$\mom_{l_P Q_P}(X_{\Sigma_P})$
for all $0$-strata
$P$
descends to that of
$\mom_{l^P_{S^{(k)}} Q^P_{S^{(k)}}}(X_{\Sigma_{S^{(k)}}})$
for all $k$-strata
$S^{(k)}$
where
$Q^P_{S^{(k)}}, l^P_{S^{(k)}}$
denote the induced
polytopes
and
integers.
Since both closed embeddings
$X_{\Sigma_{S^{(k)}}}
\hookrightarrow
X_{\Sigma_P}$
and
$(\sigma_P / \langle \sigma^P_{S^{(k)}} \rangle)^\vee
\hookrightarrow
(\sigma_P)^\vee$
correspond to the quotient
$\Sigma_P
\twoheadrightarrow
\Sigma_{S^{(k)}}
=
\Sigma_P / \sigma^P_{S^{(k)}}$,
by definition of
$\Psi$
we have the commutative diagram
\begin{align*}
\begin{gathered}
\xymatrix{
X_{\Sigma_P \cap \sigma_P} \ar[d]_{\mom_{l_P Q_P}} & X_{\Sigma_{{S^{(k)}}} \cap (\sigma_P / \langle \sigma^P_{S^{(k)}} \rangle)} \ar@{_{(}->}[l] \ar[d]_{\mom_{l^P_{S^{(k)}} Q^{P}_{S^{(k)}}}} \ar@{=}[r] & X_{\Sigma_{S^{(k)}} \cap (\sigma_{P^\prime} / \langle \sigma^{P^\prime}_{S^{(k)}} \rangle)} \ar@{^{(}->}[r] \ar[d]^{\mom_{l^{P^\prime}_{S^{(k)}} Q^{P^\prime}_{S^{(k)}}}} & X_{\Sigma_{P^\prime} \cap \sigma_{P^\prime}} \ar[d]^{\mom_{l_{P^\prime} Q_{P^\prime}}} \\
(\sigma_P)^\vee  & (\sigma_P / \langle \sigma^P_{S^{(k)}} \rangle)^\vee \ar@{_{(}->}[l] \ar@{=}[r] & (\sigma_{P^\prime} / \langle \sigma^{P^\prime}_{S^{(k)}} \rangle)^\vee \ar@{^{(}->}[r] & (\sigma_{P^\prime})^\vee.
}
\end{gathered}
\end{align*}
Hence
$\mom_{l_P Q_P}$
glue to yield
$\mom_\Phi$.

\subsection{Proof of \pref{thm:dual}} 
Suppose that
$\Phi$
satisfies the additional condition
(vi).
Fix its associated dual stratified space
$\Psi$.
By
\pref{lem:dual-filtration}
it admits a filtration
\pref{eq:filtration2}.
Fix further a dual cell complex of
$\Psi$
in
$\cN$
such that each of its $0$-stratum
$P^\dagger$
is contained in the corresponding top dimensional stratum
$P^\perp$
of
$\Psi$.
Combinatorially,
it can be obtained by gluing the subfans
$\Sigma_P$
of the normal fans of
$Q_P$.
Now,
we construct a fibration
$\underline{\pi}
\colon
\widetilde{\bfW}(\Phi)
\to
\Psi$
as an integrable system with noncompact fibers,
which should be SYZ dual to
$\mom_\Phi$.
We proceed by induction on
$k$.
In the sequel,
for simplicity we will assume that
all 
$\Sigma_P$
are proper.

Choose an orthonormal basis of
$M_{P, \bR}$
for each
$P \in \Phi_0$.
Let
$D_{P, \epsilon}$
be the disk of radius
$\epsilon$
around the origin
$0 \in \Sigma_P \subset M_{P, \bR}$
with respect to the basis.
Then dilation gives a diffeomorphism
$D_{P, \epsilon} \to M_{P, \bR}$.
Let
$D_{P^\dagger, \epsilon}$
be the disk of radius
$\epsilon$
around
$P^\dagger \in P^\perp \subset M^\vee_{P, \bR}$
with respect to the dual basis.
Taking
$\epsilon$
sufficiently small,
we may assume that
each
$D_{P^\dagger, \epsilon}$
is contained in
$P^\perp$,
as there are only finitely many strata.
Then projection from
$P^\dagger$
gives a homeomorphism
$\bar{D}_{P^\dagger, \epsilon} \to \bar{P}^\perp$
of the closures,
which induces a diffeomorphism
$D_{P^\dagger, \epsilon} \to P^\perp$
and
a stratification of
$\del \bar{D}_{P^\dagger, \epsilon}$.

\begin{definition}
We define
\begin{align*}
\underline{\pi}_0
\colon
\widetilde{\bfW}(\Phi_0)
\to
\Psi_0
\end{align*}
as the composition of
$\tilde{\pi}_0$
with the disjoint union of diffeomorphisms
\begin{align*}
M_{P, \bR}
\to
D_{P, \epsilon}
\to
D_{P^\dagger, \epsilon}
\to
P^\perp,
\end{align*}
where each
$D_{P, \epsilon} \to D_{P^\dagger, \epsilon}$
is the stratified diffeomorphism induced by
$\Sigma_P$.
\end{definition}

\begin{remark} \label{rmk:moment}
The map
$\underline{\pi}_0$
can be regarded as the disjoint union of moment maps
\begin{align*}
\Log_P \colon T^* \widehat{M}_P \cong (\bC^*)^n \to \bR^n, \
(\theta, \xi) \mapsto \xi
\end{align*}
for the lifts to
$T^* \widehat{M}_P$
of the self $\widehat{M}_P$-actions.
\end{remark}

For each $k$-stratum
$S^{(k)}$
adjacent to
$P$,
let
$D_{S^{(k)}, P, \epsilon} \subset \del \bar{D}_{P, \epsilon}$
be the inverse image of
$S^{(k), \perp}$
under the homeomorphism
$\bar{D}_{P, \epsilon}
\to
\bar{D}_{P^\dagger, \epsilon}
\to
\bar{P}^\perp$.
It has the induced stratification by
$\Sigma_P$.
There is a canonical diffeomorphism
$M_{S^{(k)}, \bR} \to D_{S^{(k)}, P, \epsilon}$
which sends the stratification of
$\Sigma_{S^{(k)}}$
to that of
$D_{S^{(k)}, P, \epsilon}$.
Note that
we will have this diffeomorphism even without assuming
$\Sigma_P$
to be proper.
We denote by
$\underline{\pi}_{0, S^{(k)}}$
the composition of
$\tilde{\pi}_{0, S^{(k)}}$
with the diffeomorphism
\begin{align*}
M_{S^{(k)}, \bR}
\to
D_{S^{(k)}, P, \epsilon}
\to
S^{(k), \perp}.
\end{align*}
This is well defined.
Namely,
for all exit paths
$P \to S^{(k)} \leftarrow P^\prime$
the diagrams
\begin{align*}
\begin{gathered}
\xymatrix{
M_{P, \bR} \ar[r] & M_{S^{(k)}, \bR} \ar[r] \ar@{=}[d] & D_{S^{(k)}, P, \epsilon} \ar[r] & S^{(k), \perp} \ar@{=}[d] \\
M_{P^\prime, \bR} \ar[r] & M_{S^{(k)}, \bR} \ar[r] &  D_{S^{(k)}, P^\prime, \epsilon} \ar[r] & S^{(k), \perp}.
}
\end{gathered}
\end{align*}
commute.
Indeed,
our definition is independent of the choices of orthonormal bases of
$M_{P, \bR}$,
as we use only
dilations,
projections from
$P^\dagger$
to
$\del \bar{P}^\perp$
and
compatible quotient sequences
\begin{align*}
\begin{gathered}
M_{P, \bR} \to M_{I, \bR} \to M_{F, \bR} \to \cdots \\
\Sigma_P \to \Sigma_I \to \Sigma_F \to \cdots
\end{gathered}
\end{align*}
for all
$P \in \Phi_0$.
Recall that
the fixed dual cell complex of
$\Psi$
in
$\cN$
induces
stratifications of
$D_{P^\dagger, \epsilon}$
by
$\Sigma_P$
and
the maps
$D_{P, \epsilon} \to D_{P^\dagger, \epsilon}$
which respect the stratifications.
Note that
we will also have this diffeomorphism even without assuming
$\Sigma_P$
to be proper,
as 
$\Sigma_P$
are simplicial.

Suppose that
we construct
$\underline{\pi}_{k-1}$
for the subfanifold
$\Phi_{k-1}$.

\begin{definition}
We define
\begin{align*}
\underline{\pi}_k
\colon
\widetilde{\bfW}(\Phi_k)
\to
\Psi_k
\end{align*}
as follows.
First,
consider the map canonically induced by
$\underline{\pi}_{k-1}, \underline{\pi}_{0, S^{(k)}}$
and
the projections
$T^* S^{(k)}_\circ \to S^{(k)}_\circ$
to the base.
Next,
contract the images
$S^{(k)}_\circ$
to obtain
$\underline{\pi}_k$.
\end{definition}

\begin{remark} \label{rmk:k-underline}
Since we contract
$T^* S^{(k)}_\circ$
to single points
when defining
$\underline{\pi}_k$,
by Remark
\pref{rmk:k-tilde}
one may regard
$\underline{\pi}_k$
as the gluing of
$\underline{\pi}_{k-1}$
with
$\underline{\pi}_{0, S^{(k)}}$.
As the contraction of the images
$S^{(k)}_\circ$
corresponds to shrinking the Lagrangians
$\widehat{M}_{S^{(k)}} \times S^{(k)}_\circ$
in the gluing
$\widetilde{\bL}(\Phi_k)$,
the image of
$\underline{\pi}_k$
is homeomorphic to that of
$\tilde{\pi}_k$.
\end{remark}

We denote
$\underline{\pi}_n$
by
$\underline{\pi}$.
From Remark
\pref{rmk:k-underline}
for
$k = n$
one sees that
$\underline{\pi}$
is the gluing of the disjoint unions of the projections to the cotangent fibers from
$T^* \widehat{M}_P$
along the canonical inclusions induced by
\begin{align*}
\widehat{M}_{S^{(n)}}
\hookrightarrow
\ldots
\hookrightarrow
\widehat{M}_F
\hookrightarrow
\widehat{M}_I
\hookrightarrow
\widehat{M}_P,
\end{align*}
which are obtained by Definition
\pref{dfn:fanifold}.
Here,
$P, I, F, \ldots, S^{(n)}$
run through strata of dimension
$0, 1, 2, \ldots, n$
of
$\Phi$
with
$P \subset \del_{in} I,
P, I \subset \del_{in} F,
\ldots,
P, I, F, \ldots, S^{(n-1)} \subset \del_{in} S^{(n)}$.

\begin{lemma}
The map
$\underline{\pi}$
defines a stratified fibration,
whose fiber over a point of any $k$-stratum
$S^{(k), \perp}$
is given by
$T^k \times T^* S^{(k)}$. 
\end{lemma}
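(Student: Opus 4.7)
The plan is to identify $\underline{\pi}$ with the cotangent projection on the canonical Weinstein charts of Theorem \pref{thm:fibration}(1), then assemble these local pictures into a global stratified fibration via the compatibility encoded in Definition \pref{dfn:fanifold} and Definition \pref{dfn:dual}. Fix a stratum $S^{(k)} \subset \Phi$ whose dual $S^{(k),\perp} \subset \Psi$ has dimension $k$; equivalently, $S^{(k)}$ has codimension $k$ in $\Phi$, so that $\dim M_{S^{(k)},\bR} = k$ and $\widehat{M}_{S^{(k)}} \cong T^k$. The third bullet of Theorem \pref{thm:fibration}(1) furnishes a symplectomorphism of pairs
\[
(T^* T^k \times T^* S^{(k)}, \bL(\Sigma_{S^{(k)}}) \times S^{(k)})
\hookrightarrow
(\widetilde{\bfW}(\Phi), \widetilde{\bL}(\Phi))
\]
covering a neighborhood of $\pi^{-1}(S^{(k)})$.

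First, I would unravel $\underline{\pi}$ on this chart. By the definition of $\underline{\pi}_k$ combined with Remark \pref{rmk:k-underline} applied iteratively, on this chart the map is
\[
T^* T^k \times T^* S^{(k)} \to M_{S^{(k)},\bR} \cong S^{(k),\perp}, \quad ((\theta,\xi),(y,\eta)) \mapsto \xi,
\]
where the diffeomorphism $M_{S^{(k)},\bR} \cong S^{(k),\perp}$ comes from Definition \pref{dfn:dual} and the entire $T^* S^{(k)}$ factor is collapsed by the contraction step in the construction of $\underline{\pi}_k$. The fiber over any $\xi_0 \in S^{(k),\perp}$ is manifestly $T^k \times T^* S^{(k)}$, and locally $\underline{\pi}$ is the trivial smooth fiber bundle with this fiber.

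Next, I would verify that these local trivializations patch into a global stratified fibration. For $S^{(k')} \subset \overline{S^{(k)}}$ with $k' > k$, the commutative diagram of Definition \pref{dfn:fanifold} identifies the quotient $M_{S^{(k)},\bR} \to M_{S^{(k')},\bR}$ dually with the inclusion $S^{(k'),\perp} \hookrightarrow \overline{S^{(k),\perp}}$ used in the gluing of $\Psi$ in Definition \pref{dfn:dual}. Combined with the compatibility of Weinstein charts established in Section 4 and the explicit inductive construction of $\underline{\pi}_k$, this shows the local bundles match on overlaps and assemble into a single map $\underline{\pi} \colon \widetilde{\bfW}(\Phi) \to \Psi$ such that each source stratum $\underline{\pi}^{-1}(S^{(k),\perp})$ is a fiber bundle over $S^{(k),\perp}$ with fiber $T^k \times T^* S^{(k)}$. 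Local triviality on a paracompact Hausdorff base then yields the fibration property stratum by stratum.

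The principal obstacle is the careful bookkeeping of the contraction of $S^{(k)}_\circ$ in the target of $\tilde{\pi}_k$. One must ensure that this contraction is compatible with the inductive gluing of $\underline{\pi}_k$ out of $\underline{\pi}_{k-1}$ and $\underline{\pi}_{0,S^{(k)}}$, so that near a boundary point of $S^{(k),\perp}$ lying on some $S^{(k'),\perp}$ with $k' > k$, the fiber degenerates correctly to $T^{k'} \times T^* S^{(k')}$. This compatibility is governed by the symplectomorphism \pref{eq:k-symplectomorphism} together with Remark \pref{rmk:k-underline}, so the argument ultimately reduces to verifying that these identifications respect the dual stratification of $\Psi$ produced in Definition \pref{dfn:dual}.
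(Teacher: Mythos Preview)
Your approach is correct and matches the paper's, which disposes of the lemma in a single sentence: ``It immediately follows from definition and Remark~\ref{rmk:k-underline}.'' Your proposal is a careful unpacking of exactly that sentence---you read off the local form of $\underline{\pi}$ on the handle $T^*\widehat{M}_{S^{(k)}}\times T^*S^{(k)}_\circ$ from the definition of $\underline{\pi}_k$ and Remark~\ref{rmk:k-underline}, identify the fiber, and then invoke the chart compatibilities already established in Section~4. One small slip: in your gluing paragraph the quotient should run $M_{S^{(k')},\bR}\to M_{S^{(k)},\bR}$ (from the larger normal space of the smaller stratum to the smaller normal space of the larger stratum), and dually the closure relation in $\Psi$ is $S^{(k),\perp}\subset\overline{S^{(k'),\perp}}$ rather than the reverse; this does not affect the argument.
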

\begin{proof}
It immediately follows from
definition
and
Remark
\pref{rmk:k-underline}
\end{proof}

\subsection{Very affine hypersurfaces}
First,
we recall HMS for very affine hypersurfaces established by Gammage--Shende.
Let
$\Sigma \subset M_\bR \cong \bR^{n+1}$
be a smooth quasiprojective stacky fan
\cite[Definition 2.4]{GS15}
whose primitive ray generators span a convex lattice polytope
$\Delta^\vee$
containing the origin.
Then
$\Sigma$
defines
a smooth Deligne--Mumford stack
$\bfT_\Sigma$
\cite[Definition 2.5]{GS15}
with toric boundary divisor
$\del \bfT_\Sigma$
and
an adapted star-shaped triangulation
$\cT$
of
$\Delta^\vee$
\cite[Definition 3.3.1]{GS2}.
Recall that a triangulation
$\cT$
is
\emph{adapted}
if there is a convex piecewise function
$\mu \colon \Delta^\vee \to \bR$
whose corner locus is
$\cT$.
We denote by
$\bT_\bC$
the complex torus
$M_\bR / M \otimes_\bR \bC^*$
acting on
$\bfT_\Sigma$,
where
$\bT$
is a real $(n+1)$-dimensional torus with
character lattice
$M^\vee$
and
cocharacter lattice
$M$.

Consider a Laurent polynomial
\begin{align*} 
W_t \colon \bT^\vee_\bC \to \bC, \
z
\mapsto
\sum_{\alpha \in \VVert(\cT)} c_\alpha t^{-\mu(\alpha)}z^\alpha
\end{align*}
in coordinates
$z = (z_1, \ldots, z_{n+1})$
on
$\bT^\vee_\bC$,
where
$c_\alpha \in \bC^*$
are arbitrary constants
and
$t \gg 0$
is a tropicalization parameter.
For sufficiently general
$t$,
the hypersurface
$H_t = W^{-1}_t(0)$
is
smooth
and
called
\emph{very affine}.
Restricting the standard Liouville structure on
$\bT^\vee_\bC \cong T \bT^\vee \cong T^* \bT^\vee$
for a fixed inner product,
we may regard
$H_t$
as a Liouville submanifold.

\begin{theorem}[{\cite[Theorem 1.0.1]{GS2}}]
There is an equivalence
\begin{align*}
\Fuk(H_t) \simeq \Coh(\del \bfT_\Sigma)
\end{align*}
between
the wrapped Fukaya category on
$H_t$
and
the dg category of coherent sheaves on
$\del \bfT_\Sigma$.
\end{theorem}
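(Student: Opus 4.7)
The plan is to prove HMS for $(H_t, \del \bfT_\Sigma)$ by a local-to-global argument, using the fanifold $\Phi = \Sigma \cap S^{n+1}$ as the common combinatorial indexing. First, I would identify the Weinstein skeleton $\Core(H_t)$ with $\widetilde{\bL}(\Phi)$ as constructed in \pref{thm:fibration}. The idea is that as $t \to \infty$, the very affine hypersurface $H_t$ degenerates toward its tropicalization; concretely, the logarithm map $\Log \colon H_t \to M_\bR$ concentrates $H_t$ near the tropical hypersurface $\Pi_\Sigma$, and the negative Liouville flow collapses $H_t$ onto a Lagrangian whose combinatorial structure is governed by $\Pi_\Sigma$. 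After combinatorially identifying $\Pi_\Sigma$ with $\Phi$, the resulting skeleton should agree with the one built by iteratively attaching FLTZ skeleta $\bL(\Sigma_S)$ over strata $S \subset \Phi$, namely $\widetilde{\bL}(\Phi)$.

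Next, I would invoke the microsheaf description of partially wrapped Fukaya categories from \cite{GPS3}, yielding
\[
\Fuk(H_t)
\simeq
\Gamma(\widetilde{\bL}(\Phi), \msh_{\widetilde{\bL}(\Phi)})^{op}
\simeq
\Gamma(\Phi, \pi_* \msh_{\widetilde{\bL}(\Phi)})^{op}.
\]
Over a sufficiently small neighborhood of each stratum $S \subset \Phi$, the chart \pref{eq:charts} identifies $\pi^{-1}(\Nbd(S))$ with $\bL(\Sigma_S) \times S$, so local sections of $\pi_* \msh_{\widetilde{\bL}(\Phi)}$ are governed by microsheaves on FLTZ Lagrangians. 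The coherent-constructible correspondence of \cite{FLTZ12, FLTZ14}, reformulated microsheaf-theoretically, then identifies those local sections with $\Coh$ of the corresponding toric stratum of $\bfT_\Sigma$.

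On the $B$-side, $\del \bfT_\Sigma$ is the gluing of the toric strata along their common intersections, so $\Coh(\del \bfT_\Sigma)$ admits a parallel presentation as a limit of $\Coh$ of toric strata along closed-immersion pullbacks, realized concretely via the colimit description $\bfT(\Phi)$ recalled in the introduction. The gluing data produced by \pref{thm:fibration}(1) and \cite[Proposition 4.34]{GS1} is designed to match this $B$-side gluing on the nose, so the local equivalences assemble into the global equivalence claimed.

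The main obstacle is the first step: showing rigorously that $\Core(H_t) \simeq \widetilde{\bL}(\Phi)$ up to Weinstein homotopy. This requires controlling the Liouville geometry of $H_t$ in a neighborhood of each piece of the tropical limit, and amounts to constructing an exhaustion of $H_t$ by Weinstein subdomains that realize the inductive handle-attachment description of $\widetilde{\bfW}(\Phi)$ from Section 3, including matching of biconicity data at each gluing. A secondary difficulty is verifying that the microsheaf form of CCC is compatible with the specific gluing encoded by the fanifold; one must check that the symplectomorphism \pref{eq:charts} induces, on microsheaves, the closed-immersion pullback corresponding to the fan quotient $\Sigma_P \twoheadrightarrow \Sigma_S$ from Definition \pref{dfn:fanifold}.
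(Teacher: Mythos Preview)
The paper does not prove this theorem: it is stated as a citation of \cite[Theorem 1.0.1]{GS2} and serves only as background in Section~6.3, with no accompanying argument. So there is no proof in the paper to compare against.

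That said, your outline is a reasonable summary of the strategy that \cite{GS2} (together with \cite{GS1}) actually carries out, and it is worth noting where the division of labor lies. The identification $\Core(H_t) \simeq \widetilde{\bL}(\Phi)$ that you flag as the main obstacle is not proved via the fanifold machinery of \cite{GS1}; rather, \cite{GS2} computes $\Core(H_t)$ directly by a pants decomposition and the local computation of \pref{lem:localcore}, and only afterward does \cite[Example 4.23]{GS1} observe that the result agrees with $\widetilde{\bL}(\Phi)$ for $\Phi = \Sigma \cap S^{n+1}$. So the logical order is the reverse of what you sketch: the skeleton is computed intrinsically in \cite{GS2}, and the fanifold description is a reinterpretation. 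Your plan to exhaust $H_t$ by Weinstein subdomains matching the handle attachments of Section~3 is therefore not how the cited proof proceeds, and would require a separate argument. The microsheaf and CCC steps you describe are, however, essentially the content of \cite[Theorem 5.4, Corollary 5.8]{GS1} specialized to this $\Phi$, combined with \cite[Theorem 1.4]{GPS3} as the paper's introduction indicates.
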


Next,
following
\cite[Section 3]{AAK},
we review the SYZ fibrations associated with mirror symmetry for very affine hypersurfaces.
For simplicity,
we temporarily assume that
$\Sigma$
is an ordinary simplicial fan.
Consider the moment map
\begin{align*} 
\Log \colon \bT^\vee_\bC \to \bR^{n+1}, \
z \mapsto (\log|z_1|, \ldots, \log |z_{n+1}|)
\end{align*}
for the lift to
$T^* \bT^\vee$
of the self $\bT^\vee$-action.
The image
$\Pi_t = \Log(H_t)$
is called the
\emph{amoeba}
of
$H_t$.

\begin{definition}
The
\emph{tropical hypersurface}
$\Pi_\Sigma$
associated with
$H_t$
is the hypersurface defined by the
\emph{tropical polynomial}
\begin{align*}
\varphi \colon M^\vee_\bR \to \bR, \
\varphi(m)
=
\max\{\langle m, n \rangle - \mu(n) \ | \ n \in \Delta^\vee \}.
\end{align*}
\end{definition}

It is known that
$\Pi_\Sigma$
is a deformation retract of
$\Pi_t$.
According to
\cite[Corollary 6.4]{Mik},
when
$t \to \infty$
the rescaled amoeba
$\Pi_t / \log t$
converges to
$\Pi_\Sigma$.
Combinatorially,
$\Pi_\Sigma$
is the dual cell complex of
$\cT$.
In particular,
the set of connected components of
$\bR^{n+1} \setminus \Pi_\Sigma$
bijectively corresponds to the set
$\VVert(\cT)$
of vertices of
$\cT$
according to which
$\alpha \in \VVert(\cT)$
achieves the maximum of
$\langle m, \alpha \rangle - \mu(\alpha)$
for
$m \in \bR^{n+1} \setminus \Pi_\Sigma$.
Note that
$\bR^{n+1} \setminus \Pi_t$
for
$t \gg 0$
has the same combinatrics as
$\bR^{n+1} \setminus \Pi_\Sigma$.
In the sequel,
we will fix a general
$t \gg 0$
and
drop
$t$
from the notation.

Let
$\ret \colon \Pi \to \Pi_\Sigma$
be the continuous map induced by the retraction.
Then the composition
\begin{align} \label{eq:A-fibration}
H
\xrightarrow{\Log |_H}
\Pi
\xrightarrow{\ret}
\Pi_\Sigma
\end{align}
gives the $A$-side SYZ fibration.
Recall that
$H$
admits a pants decomposition
\cite[Theorem 1']{Mik}.
By
\cite[Proposition 4.6]{Mik}
the $k$-th intersection of $i_1, \cdots, i_k$-th legs
\cite[Definition 5.2.2]{GS2}
of an $n$-dimensional tailored pants
$\tilde{P}_n$
is isomorphic to a product
\begin{align*}
\bC^*_{z_{i_1}} \times \cdots \times \bC^*_{z_{i_k}} \times \tilde{P}_{n-k}.
\end{align*}
Under
\pref{eq:A-fibration}
a $k$-th intersection of legs maps to a subset of $k$-stratum away from lower dimensional strata.
Here,
we equip
$\Pi_\Sigma$
the canonical stratification.
In particular,
the fiber over a general point of a $k$-stratum contains
$T^k$.
On the other hand,
the $B$-side SYZ fibration is induced by the map from the total space of the anticanonical sheaf on
$\bfT_\Sigma = X_\Sigma$
defined as
\pref{eq:moment}.

Now,
we return to the case
where
$\Sigma$
is a smooth quasiprojective stacky fan.
Since
$\cT$
is star-shaped,
each $(n+1)$-simplex is a polytope
\begin{align*}
\Delta^\vee_\sigma
=
\Conv(0, \alpha^1, \ldots, \alpha^{n+1})
\subset
M_\bR
\end{align*}
spanned by
the origin
and
primitive ray generators
$\alpha^1, \ldots, \alpha^{n+1}$
of some maximal dimensional cone
$\sigma$.
Consider the map
\begin{align*}
\Delta^\vee_{n+1} \to \Delta^\vee_\sigma, \
e_i \mapsto \alpha^i  
\end{align*}
from the standard $(n+1)$-simplex with the standard basis
$e_1, \ldots, e_{n+1}$,
whose dual induces a map
$f_{\Delta^\vee_\sigma} \colon \bT^\vee_\bC \to \bT^\vee_\bC$.

\begin{definition}[{\cite[Definition 5.1.4]{GS2}}] 
The
\emph{$\Delta^\vee_\sigma$-pants}
is the inverse image
$\tilde{P}_{\Delta^\vee_\sigma} = f^{-1}_{\Delta^\vee_\sigma}(\tilde{P}_n)$
of the tailored pants.
We write
$\tilde{A}_{\Delta^\vee_\sigma}$
for its amoeba
$\Log(\tilde{P}_{\Delta^\vee_\sigma})$.
For
$b = (b_1, \ldots, b_{n+1}), b_i \gg 0$
we denote by
$\tilde{P}^b_{\Delta^\vee_\sigma}$
and
$\tilde{A}^b_{\Delta^\vee_\sigma}$
respectively
the
\emph{translated tailored $\Delta^\vee_\sigma$-pants}
obtained by scaling the coefficients of the defining polynomial by
$e^{- b_i}$
and
its amoeba obtained by translation to the first orthant.
\end{definition}

\begin{lemma}[{\cite[Lemma 5.3.6]{GS2}}] \label{lem:localcore} 
Let
$\del^0 \tilde{A}^b_{\Delta^\vee_\sigma}$
be the component of
$\del \tilde{A}^b_{\Delta^\vee_\sigma}$
which bounds the region of
$\bR^{n+1}$
containing the inverse image of the all-negative orthant.
Restrict the canonical Liouille structure on
$T^* \bT^\vee$
to
$\tilde{P}^b_{\Delta^\vee_\sigma}$.
Let
$\Sigma_{\sigma} \subset \Sigma$
be the stacky subfan whose primitive ray generators are
$\alpha^1, \ldots, \alpha^{n+1}$.
Then we have  
\begin{align*}
\Core(\tilde{P}^b_{\Delta^\vee_\sigma})
=
\Log^{-1}(\del^0 \tilde{A}^b_{\Delta^\vee_\sigma})
\cap
\bL(-\Sigma_\sigma).
\end{align*}
\end{lemma}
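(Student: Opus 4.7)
The plan is to reduce to the known skeleton description for the standard tailored pants $\tilde{P}^b_n$ via the finite covering $f_{\Delta^\vee_\sigma} \colon \bT^\vee_\bC \to \bT^\vee_\bC$, then identify the pullback of the standard FLTZ skeleton with the stacky FLTZ Lagrangian $\bL(-\Sigma_\sigma)$.

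First I would treat the base case where $\Delta^\vee_\sigma$ is the standard simplex with vertices $0, e_1, \ldots, e_{n+1}$, so that $f_{\Delta^\vee_\sigma} = \id$, $\tilde{P}^b_{\Delta^\vee_\sigma} = \tilde{P}^b_n$, and $\Sigma_\sigma$ is the ordinary fan $\Sigma_n$ spanned by $e_1, \ldots, e_{n+1}$. Here the claim is
\begin{align*}
\Core(\tilde{P}^b_n)
= \Log^{-1}(\del^0 \tilde{A}^b_n) \cap \bL(-\Sigma_n),
\end{align*}
which one computes directly from the defining polynomial of the translated tailored pants: over the boundary component $\del^0 \tilde{A}^b_n$ closest to the all-negative orthant, the negative Liouville flow contracts arguments onto the coordinate subtori, and the modulus onto $\del^0 \tilde{A}^b_n$, so that the attractor is exactly the FLTZ locus $\bL(-\Sigma_n)$ cut out over $\del^0 \tilde{A}^b_n$.

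Next I would verify that $f_{\Delta^\vee_\sigma}$ intertwines the relevant Liouville data. Since $f_{\Delta^\vee_\sigma}$ is the algebraic homomorphism dual to the lattice map $e_i \mapsto \alpha^i$, it is a finite unramified covering of $\bT^\vee_\bC$ onto itself; the standard Liouville form on $T^*\bT^\vee$ pulls back to itself up to a linear change of variables, so the Liouville flow on $\tilde{P}^b_{\Delta^\vee_\sigma}$ is equivariant over the flow on $\tilde{P}^b_n$. Consequently
\begin{align*}
\Core(\tilde{P}^b_{\Delta^\vee_\sigma})
= f_{\Delta^\vee_\sigma}^{-1}\bigl( \Core(\tilde{P}^b_n) \bigr).
\end{align*}
On the base side of $\Log$, the preimage of $\del^0 \tilde{A}^b_n$ under the linear map on $\bR^{n+1}$ induced by $\Log \circ f_{\Delta^\vee_\sigma}$ is $\del^0 \tilde{A}^b_{\Delta^\vee_\sigma}$ by the very construction of the translated object and of the distinguished boundary component. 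The heart of the argument is then to identify
\begin{align*}
f_{\Delta^\vee_\sigma}^{-1}\bigl( \bL(-\Sigma_n) \bigr)
= \bL(-\Sigma_\sigma)
\end{align*}
as stacky FLTZ Lagrangians. Ray by ray, a ray of $-\Sigma_n$ spanned by $-e_i$ pulls back to the ray of $-\Sigma_\sigma$ spanned by $-\alpha^i$, while the connected orthogonal subtorus $e_i^\perp$ pulls back to the possibly disconnected subgroup $G_{\alpha^i}$ prescribed by the stacky primitives.

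The step I expect to be the main obstacle is the stacky bookkeeping in this last identification: whenever $\Delta^\vee_\sigma$ is not a standard simplex the subfan $\Sigma_\sigma$ is genuinely stacky, and the preimage of each connected FLTZ subtorus acquires precisely the extra components indexed by $\ker f_{\Delta^\vee_\sigma}$. Once this kernel is matched with the stacky primitives of $\Sigma_\sigma$, face by face of $\Delta^\vee_\sigma$, the identification becomes canonical and the lemma follows from the equivariance of the Liouville flow under the covering together with the base case.
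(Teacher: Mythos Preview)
The paper does not give its own proof of this lemma: it is stated as a citation of \cite[Lemma 5.3.6]{GS2} and used as a black box in the subsequent arguments, with no proof reproduced in the text. There is therefore nothing in the present paper to compare your proposal against.

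That said, your outline is the natural strategy and matches the shape of the argument in the original reference: reduce to the standard tailored pants via the finite covering $f_{\Delta^\vee_\sigma}$, use that the Liouville flow is equivariant under this covering so that the skeleton pulls back, and then identify $f_{\Delta^\vee_\sigma}^{-1}(\bL(-\Sigma_n))$ with the stacky FLTZ Lagrangian $\bL(-\Sigma_\sigma)$ by matching the disconnected subgroups $G_{\alpha^i}$ with the kernel of the covering. The only point to be careful about is the base case itself: the identification $\Core(\tilde{P}^b_n) = \Log^{-1}(\del^0 \tilde{A}^b_n) \cap \bL(-\Sigma_n)$ is not a one-line observation but requires the explicit tailoring of the pants carried out in \cite{GS2}, so in a self-contained write-up you would either need to reproduce that computation or cite it directly.
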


From
\cite[Theorem 6.2.4]{GS2}
it follows that
$\Core(H)$
is the gluing of
$\Core(\tilde{P}^b_{\Delta^\vee_\sigma})$
for all
$\sigma \in \Sigma_{\max}$.
Hence in this case the fiber of
\pref{eq:A-fibration}
over a point of $k$-stratum away from lower dimensional strata becomes isomorphic to a finite cover of a product
\begin{align*}
\bC^*_{z_{i_1}} \times \cdots \times \bC^*_{z_{i_k}} \times \tilde{P}^b_{\Delta^\vee_{\sigma / \langle i_1, \ldots, i_k \rangle}}.
\end{align*}
Here,
$\tilde{P}^b_{\Delta^\vee_{\sigma / \langle i_1, \ldots, i_k \rangle}}
=
f^{-1}_{\Delta^\vee_{\sigma / \langle i_1, \ldots, i_k \rangle}}(\tilde{P}_{n-k})$
is the finite cover of
$\tilde{P}_{n-k}$
determined by
\begin{align*}
\Delta^\vee_{\sigma / \langle i_1, \ldots, i_k \rangle}
=
\Conv(0, \alpha^1, \ldots, \hat{\alpha}^{i_1}, \ldots, \hat{\alpha}^{i_k}, \ldots, \alpha^{n+1}).
\end{align*}
On the other hand,
the $B$-side SYZ fibration becomes the composition of the structure morphism
$\bfT_\Sigma \to X_\Sigma$
to the coarse moduli space with the map defined as
\pref{eq:moment}.
The fiber over any point of a $k$-stratum is the real part of the corresponding subgroup of the Deligne--Mumford torus
\cite[Definition 2.4, Proposition 2.6]{FMN}
acting on
$\bfT_\Sigma$.

\subsection{Proof of \pref{thm:restriction}}
For a smooth quasiprojective stacky fan
$\Sigma \subset M_\bR$,
consider the fanifold 
$\Phi = \Sigma \cap S^{n+1}$
from
\cite[Example 4.22]{GS1}
generalized as in
\cite[Section 6]{GS1}.
In particular,
to each $0$-stratum
$P$
we associate the stacky fan
$\Sigma_P = \Sigma / \rho_P$
where
$\rho_P \in \Sigma$
is the ray passing through
$P$.
Hence
$\bfT(\Phi)$
coincide with the toric boundary divisor
$\del \bfT_\Sigma$
of
$\bfT_\Sigma$. 
Fix a stratified homeomorphism
$h_\Phi \colon \del \Delta^\vee \to \Phi$.
Then strata of
$\del \Delta^\vee$
inherit the labels from
$\Phi$.

\begin{definition}
Let
$\Pi^0_\Sigma$
be the connected component of
$\bR^{n+1} \setminus \Pi_\Sigma$
corresponding to the origin of
$\Delta^\vee$.
We define
$\Psi$
as the image of a fixed stratified homeomorphism
$h_\Psi \colon \del \Pi^0_\Sigma \hookrightarrow S^{n+1} \subset M^\vee_\bR$,
which makes the inherited labels on strata of
$\Psi$
compatible with that on strata of
$\Phi$
transported to
$M^\vee_\bR$
via a fixed inner product.
\end{definition} 

Note that
$\del \Pi^0_\Sigma$
is the image of the composition
\begin{align*}
\del \bfT_\Sigma \hookrightarrow \bfT_\Sigma \to X_\Sigma \to (X_\Sigma)_{\geq 0} \to M^\vee_\bR.
\end{align*}
Hence
$\Phi$
satisfies
(vi)
and
admits the dual stratified space
$\Psi$.
In particular,
the $B$-side fibration
$\bfT(\Phi) \to \Psi$
is
the gluing of the compositions
$\bfT_{\Sigma_P} \to X_{\Sigma_P} \to l_P Q_P$
and
compatible with that
$\del \bfT_\Sigma \to \del \Pi^0_\Sigma$
for very affine hypersurface.
Namely,
we have the commutative diagram
\begin{align*}
\begin{gathered}
\xymatrix{
\bfT(\Phi) \ar@{=}[r] \ar[d] & \del \bfT_\Sigma \ar[d] \\
\Psi \ar[r]^{h^{-1}_\Psi} & \del \Pi^0_\Sigma.
}
\end{gathered}
\end{align*}

\begin{lemma}
There is a stratified homeomorphism of
the image of
$\Core(H)$
under the composition
$\ret \circ \Log$
and
$\Phi$
transported to
$M^\vee_\bR$
via the fixed inner product.
\end{lemma}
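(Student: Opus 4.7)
The plan is to decompose $\Core(H)$ into cores of tailored pants, compute each image under $\ret \circ \Log$ via Lemma~\pref{lem:localcore}, and assemble these local descriptions into a stratified homeomorphism with $\Phi$ transported to $M^\vee_\bR$. The argument proceeds in three stages: compute the $\Log$ image on each pants core, match it combinatorially with the maximal face of $\Phi$ it should correspond to, and verify that the gluings of pants along legs match the gluings of maximal simplices of $\Phi$ along common faces.

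By \cite[Theorem 6.2.4]{GS2}, $\Core(H)$ is the gluing $\bigcup_{\sigma \in \Sigma_{\max}} \Core(\tilde{P}^b_{\Delta^\vee_\sigma})$ along common legs, where $\sigma$ ranges over maximal cones of $\Sigma$. For each such $\sigma$, Lemma~\pref{lem:localcore} gives $\Core(\tilde{P}^b_{\Delta^\vee_\sigma}) = \Log^{-1}(\del^0 \tilde{A}^b_{\Delta^\vee_\sigma}) \cap \bL(-\Sigma_\sigma)$. Under the Liouville identification $T^* \bT^\vee \cong \widehat{M} \times M_\bR$ (using the fixed inner product $M_\bR \cong M^\vee_\bR$), the map $\Log$ projects each Lagrangian component $\bL_\tau = \tau^\perp \times \tau$ of $\bL(-\Sigma_\sigma)$ onto its cone factor $\tau \subset M_\bR$. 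Consequently
\begin{align*}
\Log(\Core(\tilde{P}^b_{\Delta^\vee_\sigma})) = \del^0 \tilde{A}^b_{\Delta^\vee_\sigma} \cap (-\Sigma_\sigma),
\end{align*}
an $n$-dimensional subset of $M^\vee_\bR$.

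Next, I match this image with a maximal face of $\Phi$. By \cite[Corollary 6.4]{Mik}, in the tropical limit the rescaled amoeba converges to the tropical cell associated with $\Delta^\vee_\sigma$, and the component $\del^0$ converges to the portion of $\Pi_\Sigma$ facing the all-negative orthant. Combining this convergence with the leg decomposition of $\tilde{P}^b_{\Delta^\vee_\sigma}$ from \cite[Proposition 4.6]{Mik} and \cite[Definition 5.2.5]{GS2}, the $k$-th intersections of the $i_1, \dots, i_k$-th legs project under $\ret \circ \Log$ onto cells matching the $(n+1-k)$-face of $\sigma$ spanned by the remaining primitive ray generators. Composing with the reflection $-\id$ on $M^\vee_\bR$ (which identifies $-\sigma$ with $\sigma$) and restricting to $S^{n+1}$ yields a stratified homeomorphism between $(\ret \circ \Log)(\Core(\tilde{P}^b_{\Delta^\vee_\sigma}))$ and the $n$-simplex $\sigma \cap S^{n+1}$ transported to $M^\vee_\bR$.

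Finally, I glue the local homeomorphisms. When $\sigma, \sigma' \in \Sigma_{\max}$ share a face $\tau$, the pants $\tilde{P}^b_{\Delta^\vee_\sigma}, \tilde{P}^b_{\Delta^\vee_{\sigma'}}$ are glued along their legs indexed by $\tau$, their cores glue along these leg cores, and under $\ret \circ \Log$ they project onto the common face $\tau \cap S^{n+1}$, which is precisely where $\sigma \cap S^{n+1}$ and $\sigma' \cap S^{n+1}$ meet in $\Phi$. Hence the local homeomorphisms assemble to a global stratified homeomorphism $(\ret \circ \Log)(\Core(H)) \cong \Phi$. The main obstacle lies in the third paragraph: rigorously verifying that $\ret$ restricted to $\del^0 \tilde{A}^b_{\Delta^\vee_\sigma} \cap (-\Sigma_\sigma)$ is a stratified homeomorphism onto the combinatorially predicted subset of $\Pi_\Sigma$. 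This demands a careful tropical analysis of how $\del^0$ meets the reflected fan $-\Sigma_\sigma$ and how $\ret$ collapses this intersection cell-wise onto the tropical skeleton, faithfully reflecting the leg structure of the pants.
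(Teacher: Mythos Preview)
Your overall strategy---decompose $\Core(H)$ into pants cores via \cite[Theorem 6.2.4]{GS2}, invoke \pref{lem:localcore} on each piece, and glue---matches the paper's approach exactly. The difference lies in how the middle step is justified.

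You acknowledge the main obstacle yourself: verifying that $\ret$ carries $\del^0 \tilde{A}^b_{\Delta^\vee_\sigma} \cap (-\Sigma_\sigma)$ homeomorphically onto the expected piece of $\Pi_\Sigma$, stratum by stratum. Your proposed route through tropical convergence \cite[Corollary 6.4]{Mik} and the leg structure is plausible but, as you say, incomplete. The paper sidesteps this analysis entirely by invoking \cite[Theorem 5.13]{Nad}: unwinding that proof shows directly that $\ret \circ \Log$ sends each $k$-dimensional torus in $\del_\infty \bL(-\Sigma_\sigma)$ (and the zero set of the Liouville vector field) to a point of the corresponding cone in $\Sigma_\sigma(n+2-k)$, transported to $M^\vee_\bR$. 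The Liouville flow lines connecting tori of different dimensions then map to arcs parallel to the strata of $\Phi$, and this is what produces the stratum-by-stratum homeomorphism. So the missing ingredient in your argument is precisely this appeal to Nadler's result, which replaces your direct tropical analysis of the amoeba boundary.

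A minor point: your explicit reflection by $-\id$ to pass from $-\Sigma_\sigma$ to $\sigma$ is not how the paper handles the sign; the paper works throughout with the fixed inner product and the auxiliary map $h_\Psi$, and the torus-to-cone correspondence from \cite{Nad} already lands in the correct orientation without an ad hoc reflection.
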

\begin{proof}
As explained above,
$\Core(H)$
is the gluing of
$\Core(\tilde{P}^b_{\Delta^\vee_\sigma})$
for all
$\sigma \in \Sigma_{\max}$.
By
\pref{lem:localcore}
each piece is the Legendrian boundary
$\del_\infty \bL(-\Sigma_\sigma)$
of the FLTZ Lagrangian
$\bL(-\Sigma_\sigma)$
associated with the stacky subfan
$\Sigma_\sigma \subset \Sigma$.
Unwinding the proof of
\cite[Theorem 5.13]{Nad},
one sees that
$\ret \circ \Log$
projects each trajectory of $k$-dimensional torus in
$\del_\infty \bL(-\Sigma_\sigma)$
along the Liouville flow to a connected subset of the corresponding cone in
$\Sigma_\sigma(n+2-k)$
transported to
$M^\vee_\bR$.
Such trajectories of $k$-dimensional tori are connected with that of lower dimensional tori in
$\del_\infty \bL(-\Sigma_\sigma)$.
Hence the composition
\begin{align*}
h_\Psi \circ \ret \circ \Log |_{\Core(H)}
\colon
\Core(H) \to \Psi
\end{align*}
projects the trajectory to a connected subset of the corresponding $(n+1-k)$-stratum of
$\Phi$
transported to
$M^\vee_\bR$.
Note that
tori in different
$\del_\infty \bL(-\Sigma_\sigma)$
mapping to the same point get identified in
$\Core(H)$.
The images of the trajectories of $k$-dimensional tori are connected with that of lower dimensional tori
and
the images of the Liouville flow become parallel to the corresponding strata of
$\Phi$.
Thus one can find a homeomorphism of the image of each stratum of
$\Core(H)$
under
$h_\Psi \circ \ret \circ \Log$
to the corresponding stratum of
$\Phi$
transported to
$M^\vee_\bR$.
By construction such homomorphisms glue to yield the desired map.
\end{proof}

\begin{corollary}
There is a diffeomorphism of
$\widetilde{\bL}(\Phi)$
and
$\Core(H)$
over
$\Phi$
transported to
$M^\vee_\bR$.
\end{corollary}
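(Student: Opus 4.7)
My plan is to bootstrap the stratified homeomorphism of base images established in the preceding lemma to a diffeomorphism of total spaces. Both $\widetilde{\bL}(\Phi)$ and $\Core(H)$ admit stratified projections to (the transported copy of) $\Phi$: the first via the map $\pi$ of \pref{thm:fibration}, the second via the composition $h_\Psi \circ \ret \circ \Log$ combined with the identification produced in the preceding lemma. Over each stratum both projections have tori as fibers of matching dimension, since on the $\widetilde{\bL}$-side the local model is $\bL(\Sigma_S) \times S$ by \pref{thm:fibration}(1), while on the $\Core(H)$-side the local model is the Liouville-flow saturation of a piece of $\del_\infty \bL(-\Sigma_\sigma)$ from \pref{lem:localcore}.

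First I would fix, for each stratum $S \subset \Phi$ and each maximal cone $\sigma$ meeting $S$, a canonical diffeomorphism between the two local models. A cotangent-fiber sign flip gives $\bL(\Sigma_S) \cong \bL(-\Sigma_S)$ compatibly with the FLTZ torus stratification, and together with the identifications $\Sigma_S = \Sigma_\sigma / \sigma_S$ encoded in the fanifold structure on $\Phi = \Sigma \cap S^{n+1}$, this provides a local diffeomorphism respecting the projections to $\Phi$. This is essentially the content of \pref{lem:localcore} combined with the symplectomorphisms of \pref{eq:charts}.

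Next I would glue these local diffeomorphisms. On the $\widetilde{\bL}(\Phi)$-side the gluing data come from the inductive handle attachments of Section 4 and the standard neighborhoods of \pref{lem:coordinates}, which match Weinstein thickenings of $\del_\infty \bL_\sigma$ across successive filtration steps. On the $\Core(H)$-side they come from the pants decomposition \cite[Theorem 6.2.4]{GS2}, which glues the local cores $\Core(\tilde{P}^b_{\Delta^\vee_\sigma})$ of \pref{lem:localcore} across the maximal cones $\sigma \in \Sigma_{\max}$. In both cases the gluing is controlled by the same inclusions of subfans recorded in Definition \pref{dfn:fanifold}, so the local diffeomorphisms assemble into a global one lying over the homeomorphism of the preceding lemma.

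The main obstacle I expect is the careful bookkeeping of sign conventions, stacky multiplicities, and the direction of Liouville-flow saturation: the Gammage--Shende side uses $\bL(\Sigma_S)$ with positive cones while the pants cores come with $\bL(-\Sigma_\sigma)$, duplicated subtori $G_\sigma \subset \widehat{M}$ must be matched on both sides, and one must check that negation of cones intertwines correctly with the saturation by the Liouville flow and with the quotient maps $M_{S, \bR} \to M_{S', \bR}$ from Definition \pref{dfn:fanifold}. Once these conventions are aligned, the identification of bases from the preceding lemma forces the diffeomorphism, and its compatibility over $\Phi$ becomes automatic.
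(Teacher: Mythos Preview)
Your proposal is correct and follows essentially the same approach as the paper: identify the preimages over each stratum with the zero sections $\widehat{M}_{S^{(k)}_P} \times S^{(k)}_P$, then observe that both $\Core(H)$ and $\widetilde{\bL}(\Phi)$ arise by gluing these pieces along compatible boundary data. The paper's argument is in fact terser than yours---it simply asserts the stratum-wise diffeomorphism and the gluing without explicitly addressing the sign flip $\bL(\Sigma_S) \cong \bL(-\Sigma_S)$ or the stacky bookkeeping you flag, so your more careful account of those conventions is a useful elaboration rather than a deviation.
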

\begin{proof}
For any $k$-stratum
$S^{(k)}_P$
of
$\Phi$
adjacent to a $0$-stratum
$P$,
its inverse image under
$h_\Psi \circ \ret \circ \Log |_{\Core(H)}$
is diffeomorphic to the zero section of
$T^* \widehat{M}_{S^{(k)}_P} \times T^* S^{(k)}_P$.
By construction
\begin{align*}
(h_\Psi \circ \ret \circ \Log |_{\Core(H)})^{-1}(S^{(k)}_{P_\circ}), \
\widehat{M}_{S^{(k)}_P} \times S^{(k)}_{P, \circ}
\end{align*}
respectively glue along the boundaries to yield
$\Core(H), \widetilde{\bL}(\Phi)$.
\end{proof}

\begin{corollary}
There is a symplectomorphism of pairs
\begin{align}
(\widetilde{\bfW}(\Phi), \widetilde{\bL}(\Phi))
\hookrightarrow
(H, \Core(H))
\end{align}
over
$\Psi$
with respect to
$\underline{\pi}$
and
the modification of
$h_\Psi \circ \ret \circ \Log$
by further retraction
$\Pi_\Sigma = \ret \circ \Log(H)$
to
$\del \Pi^0_\Sigma$
before composing
$h_\Psi$.
\end{corollary}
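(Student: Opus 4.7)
The plan is to promote the diffeomorphism of skeleta from the preceding corollary to a symplectomorphism of Weinstein pairs, and then verify that this intertwines $\underline{\pi}$ with the modification of $h_\Psi \circ \ret \circ \Log$ stratum by stratum. First, observe that by \pref{eg:Weinstein} and the explicit descriptions in the proofs of \pref{lem:k-original} and \pref{lem:localcore}, both $(\widetilde{\bfW}(\Phi), \widetilde{\bL}(\Phi))$ and $(H, \Core(H))$ restrict over a sufficiently small neighborhood of any stratum $S^{(k)} \subset \Phi$ to the same local model $T^* T^{k} \times T^* S^{(k)}$ with its FLTZ-times-base Lagrangian $\bL(\Sigma_{S^{(k)}}) \times S^{(k)}$, since the FLTZ skeleton of the stacky subfan $\Sigma_{\sigma_P}$ indexed by a maximal cone matches the local skeleton of a translated tailored pants $\tilde P^b_{\Delta^\vee_\sigma}$. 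In particular, the diffeomorphism from the previous corollary is realized on each such local chart by the identity on $T^* T^{k} \times T^* S^{(k)}$.

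Second, extend this to a symplectomorphism of pairs. The previous corollary supplies the diffeomorphism on the skeleta; by \pref{thm:fibration}(1) the ambient symplectic structure on $\widetilde{\bfW}(\Phi)$ is determined, in a Weinstein neighborhood of $\widetilde{\bL}(\Phi)$, by gluing the standard cotangent structures on $T^* T^d \times T^* S$ via the biconicity data from \pref{lem:coordinates}. Exactly the same gluing governs $H$ near $\Core(H)$, now encoded by the biconicity of the FLTZ skeleton of $\Sigma_\sigma$ inside each translated $\Delta^\vee_\sigma$-pants; this is the content of Gammage--Shende's mirror correspondence (\pref{eg:Weinstein} and the surrounding discussion). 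Thus the local symplectomorphisms $T^* T^d \times T^* S \hookrightarrow \widetilde{\bfW}(\Phi)$ and $T^* T^d \times T^* S \hookrightarrow H$ agree on overlaps and glue. Invoking contractibility of the space of Liouville structures, we then saturate under the Liouville flow to get a symplectomorphism of pairs in the sense required.

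Third, verify compatibility with the base maps. On a chart over a stratum $S^{(k)}$, $\underline{\pi}$ is by construction the composition of $\tilde{\pi}_{0,S^{(k)}}$ with the identification $M_{S^{(k)}, \bR} \cong S^{(k),\perp}$ and the contraction of the $T^* S^{(k)}_\circ$-factor, which (by \pref{rmk:moment}) is precisely the moment map $\Log_{S^{(k)}}$ in coordinates. On the $H$ side, $\Log|_H$ lands in the amoeba $\Pi$; the retraction $\ret \colon \Pi \to \Pi_\Sigma$ collapses this to the tropical hypersurface. However, $\Pi_\Sigma$ is strictly larger than $\del \Pi^0_\Sigma$: its extra facets correspond to the noncompact $T^* S^{(k)}_\circ$-directions that our contraction kills. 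Performing the additional retraction $\Pi_\Sigma \to \del \Pi^0_\Sigma$ exactly implements that contraction, after which $h_\Psi$ carries $\del \Pi^0_\Sigma$ onto $\Psi$ in the stratum-preserving way fixed by the homeomorphism used to build $\Psi$. Chasing the local models, the diagram
\begin{align*}
\xymatrix{
T^* T^d \times T^* S^{(k)} \ar[r] \ar[d] & T^* T^d \times T^* S^{(k)} \ar[d] \\
M_{S^{(k)},\bR} \ar[r] & \del \Pi^0_\Sigma
}
\end{align*}
commutes up to the fixed inner product identification, so $\underline{\pi}$ and the modified retraction agree under the symplectomorphism.

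The main obstacle is verifying that the gluing of local symplectomorphisms is consistent across adjacent strata, since the biconicity data on the two sides come from different constructions (handle attachments versus pants gluing). I expect this to reduce to a careful match of the standard neighborhoods $\eta_\sigma$ of \pref{lem:coordinates} with the product coordinates on $\tilde P^b_{\Delta^\vee_\sigma}$ along legs, which is essentially tautological once one identifies the FLTZ Lagrangian with the local skeleton of the pants, but requires tracking the polar hypersurface modifications in the proof of \pref{lem:coordinates}. A secondary, more bookkeeping-level difficulty is to ensure that the ``further retraction'' of $\Pi_\Sigma$ onto $\del \Pi^0_\Sigma$ exists canonically and stratified-continuously; this should follow from the star-shaped adapted triangulation $\cT$, which makes $\Pi_\Sigma$ a deformation retract of $\bR^{n+1} \setminus \Pi^0_\Sigma$ onto its boundary component $\del \Pi^0_\Sigma$.
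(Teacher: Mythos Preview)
Your approach is essentially the paper's: identify the common local model over each stratum, glue the resulting local symplectomorphisms using the biconicity/handle-attachment data, and then check that the base maps agree after the additional retraction---the paper compresses all of this into ``cotangent scaling makes the local pairs symplectomorphic; by construction they glue; the same modification as in the proof of \pref{thm:dual} yields compatibility over $\Psi$.'' One small index slip to fix: for a $k$-stratum $S^{(k)} \subset \Phi$ the local model is $T^* \widehat{M}_{S^{(k)}} \times T^* S^{(k)}$ with $\widehat{M}_{S^{(k)}} \cong T^{n-k}$ (torus dimension equals codimension, as in \pref{thm:fibration}(1)), not $T^* T^k \times T^* S^{(k)}$.
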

\begin{proof}
Cotangent scaling makes
$(T^* \widehat{M}_{S^{(k)}_P} \times T^* S^{(k)}_P,  \widehat{M}_{S^{(k)}_P} \times S^{(k)}_P)$
symplectomorphic to
\begin{align*}
(\Nbd((h_\Psi \circ \ret \circ \Log |_{\Core(H)})^{-1}(S^{(k)}_P)), (h_\Psi \circ \ret \circ \Log |_{\Core(H)})^{-1}(S^{(k)}_P))
\end{align*}
for any $k$-stratum
$S^{(k)}_P$
of
$\Phi$
adjacent to a $0$-stratum
$P$.
By construction they glue to yield a symplectomorphism of pairs
\begin{align*}
(\widetilde{\bfW}(\Phi), \widetilde{\bL}(\Phi)) \hookrightarrow (H, \Core(H))
\end{align*}
over the pair
$(\tilde{\pi}(\widetilde{\bfW}(\Phi)), \Phi)$.
The same modification as in the proof of
\pref{thm:dual}
yields compatible fibrations over
$\Psi$.
\end{proof}


\end{document}